%
%
%
%
\documentclass{amsart}
\usepackage[margin=3cm]{geometry}
\usepackage{amsmath, amsthm, amssymb}
\usepackage{MnSymbol}
\newtheorem{theorem}{Theorem}[section]

\newtheorem{conjecture}[theorem]{Conjecture}
\newtheorem{proposition}[theorem]{Proposition}
\theoremstyle{definition}
\newtheorem{definition}[theorem]{Definition}
\newtheorem{example}[theorem]{Example}

\usepackage{hyperref}
\usepackage[usenames]{color}
\usepackage{epsfig}
\usepackage{graphicx}
\usepackage{caption}
\usepackage{subfig}
\usepackage{enumerate}
\theoremstyle{remark}
\newtheorem{remark}[theorem]{Remark}

\numberwithin{equation}{section}



\begin{document}

\title[Gradient Descent for finding Riemannian center of mass]{On The Convergence of Gradient Descent for Finding the Riemannian Center of Mass}
\author{Bijan Afsari}
\address{Center for Imaging Science, The Johns Hopkins University}
\curraddr{}
\email{bijan@cis.jhu.edu}
\thanks{The authors were supported in part by grants NSF CAREER \# 0447739,
NSF CNS \# 0834470, ONR \# N00014-05-10836, and ONR \# N00014-09-1-0084}
\author{Roberto Tron}
\address{Center for Imaging Science, The Johns Hopkins University}
\email{tron@cis.jhu.edu}
\author{Ren\'{e} Vidal}
\address{Center for Imaging Science, The Johns Hopkins University}
\email{rvidal@jhu.edu}

\subjclass[2010]{Primary 53C20, 90C25, 90C26; Secondary 62H11, 68U05, 92C55}

\keywords{Riemannian center of mass, Fr\'{e}chet mean, Riemannian mean, Riemannian average, gradient descent, spherical geometry, spherical trigonometry, comparison theorems, gradient descent, convergence analysis, global convergence}

\maketitle

\begin{abstract}
We study the problem of finding the global Riemannian center of mass of a set of data points on a Riemannian manifold. Specifically, we investigate the convergence of constant step-size gradient descent algorithms for solving this problem. The challenge is that often the underlying cost function is neither globally differentiable nor convex, and despite this one would like to have guaranteed convergence to the global minimizer.  After some necessary preparations we state a conjecture which we argue is the best convergence condition (in a specific described sense) that one can hope for. The conjecture specifies conditions on the spread of the data points, step-size range, and the location of the initial condition (i.e., the region of convergence) of the algorithm. These conditions depend on the topology and the curvature of the manifold and can be conveniently described in terms of the injectivity radius and the sectional curvatures of the manifold.  For manifolds of constant nonnegative curvature (e.g., the sphere and the rotation group in $\mathbb{R}^{3}$) we show that the conjecture holds true (we do this by proving and using a comparison theorem which seems to be of a different nature from the standard comparison theorems in Riemannian geometry). For manifolds of arbitrary curvature we prove convergence results which are weaker than the conjectured one (but still superior over the available results).  We also briefly study the effect of the configuration of the data points on the speed of convergence.
\end{abstract}


\section{Introduction and Outline}
The (global) Riemannian center of mass (a.k.a. Riemannian mean or average or Fr\'{e}chet mean)\footnote{Throughout this paper we are mainly interested in the ``global'' Riemannian center of mass, hence in reference to it very often we drop the term ``global'' and simply use ``Riemannian center of mass,'' etc.  If need arises we explicitly use the term ``local'' in reference to a center which is not global (see Definition \ref{def:center_of_mass}).} of a set data points $\{x_{i}\}_{i=1}^{N}$ in a Riemannian manifold $M$ is defined as a point $\bar{x}\in M$ which minimizes the sum of squares of geodesic distances to the data points. This notion and its variants have a long history with several applications in pure mathematics (see e.g., \cite{Berger}, \cite{Grove3}, \cite{Karcher}, \cite{Kendall} and also \cite{BijanLp} for a brief history and some new results). More recently, statistical analysis on Riemannian manifolds and, in particular, the Riemannian center of mass have found applications in many applied fields. These include fields such as computer vision (see e.g., \cite{Meer} and \cite{Chellappa1}), statistical analysis of shapes (see e.g., \cite{Kendall_Book}, \cite{Le1}, \cite{Bhattacharaya1}, \cite{Groisser}, and \cite{Abishek}), medical imaging (see e.g., \cite{Goh_Vidal2009} and \cite{Pennec1}), and sensor networks (see e.g., \cite{Roberto1} and \cite{Sepulchre}), and many other general data analysis applications (see e.g., \cite{PhDKrakowski}, \cite{BijanPhd}, \cite{Buss}, and \cite{Moakher1}). In these applied settings one often needs to numerically locate or compute the Riemannian center of mass of a set of data points lying on a Riemannian manifold, and the so-called (intrinsic) gradient descent algorithm has been a popular choice for this purpose. The \emph{constant step-size} version of the gradient descent for finding the Riemannian center of mass is the easiest one to implement, and hence is the most popular one.

The main challenge in analyzing as well as working with the constant step-size gradient descent algorithm for finding the Riemannian center of mass is the fact that the underlying cost function is usually neither globally differentiable \footnote{For us global differentiability means differentiability everywhere on the manifold; however, we use the term ``global'' to remind ourselves that our functions of interest (e.g., the Riemannian distance from a point) lose differentiability at far away distances.} nor globally convex on the manifold.\footnote{In fact, it is well known that on a compact Riemannian manifold the only globally continuous convex functions are constant functions (see Theorem \ref{theorem:Yau} and \cite{ConvexFunction_Yau}).} More specifically, the cost function is not differentiable at any cut point of the data points. Moreover, as it can be shown by simple examples, the cost function can have local minimizers, which are not of interest and should be avoided. Nevertheless, we expect and hope that if the algorithm is initialized \emph{close enough} to the (unknown) global Riemannian center of mass $\bar{x}$ and the step-size is \emph{small enough}, then the algorithm should converge to the center. One would like to have the step-size small enough such that the cost is reduced at each step and at the same time the iterates do not leave a neighborhood around $\bar{x}$ in which $\bar{x}$ is the only zero of the gradient of the cost function (recall that a gradient descent algorithm, at best, can only locate a zero of the gradient vector field of the cost function). On the other hand, one would like to have large enough step-size so that the convergence is fast. The interplay between these three constraints is important in determining the conditions guaranteeing convergence as well as the speed of convergence. 

The goal of this paper is to give accurate conditions that guarantee convergence of the constant step-size gradient algorithm to the global Riemannian center of mass of a set of data points. In Section \ref{sec:preliminaries}, we first briefly give the necessary backgrounds on the Riemannian center of mass and the gradient descent algorithm for finding it, these include the notions of convex functions and sets in Subsection \ref{sec:convexity}, differentiability and convexity properties of the Riemannian distance function and bounds on its Hessian in Subsection \ref{sec:Hessian}, Riemannian center of mass and its properties in Subsection \ref{sec:Lp_center}, a general convergence theorem for gradient descent in Subsection \ref{sec:gradient_descent}, and a convergence theorem estimating the speed of convergence and the best step-size in Subsection \ref{sec:speed_of_convergence}. Following that, in Subsection \ref{sec:Conjecture}, we state Conjecture \ref{conj:optimal} in which we specify the \emph{best} ``condition for convergence'' one can hope for (in a sense to be described). Specifically, we specify a bound on the radius of any ball around the data points in which the algorithm can be initialized together with an interval of allowable step-size so that the algorithm converges to the global center of mass $\bar{x}$. The significant point is that for convergence the radius of the ball does not need to be any smaller than what ensures existence and uniqueness of the center. Moreover, the step-size can be chosen equal to the best (in a specific described sense) step-size under the \emph{extra} assumption that the iterates stay in that ball; \footnote{This step-size, in general, depends on an upper bound on the sectional curvatures of the manifold and the radius of the mentioned ball. However, interestingly, for a manifold of non-negative curvature it is simply $1$ (see Conjecture \ref{conj:optimal} and Remark \ref{rem:step_size}).} and it is conjectured that, indeed, the iterates stay in the ball. \footnote{The main challenge in proving this conjecture is to prove that the iterates stay in the ball containing the data points.} Knowing the conjecture helps us to compare and evaluate the existing results mentioned in Subsection \ref{sec:prior} as well as the results derived in this paper. In Section \ref{sec:optmial} (Theorem \ref{theorem:sphere_L2}), we prove Conjecture \ref{conj:optimal} for the case of manifolds of \textit{constant nonnegative} curvature. In our developments in this section, we first prove comparison Theorem \ref{theorem:comparison_sphere} in Subsection \ref{sec:comparison}. This comparison result (which differs from standard comparison theorems in some aspects) most likely has been known among geometers, but we could find neither its statement or a proof for it in the literature. In Subsection \ref{sec:convex_comb} we make sense of an intuitive notion of Riemannian affine convex combination of a set of data points in a manifold of nonnegative constant curvature and we explore its relation to the convex hull of the data points. These prepare us to prove the main theorem of the section, Theorem \ref{theorem:sphere_L2}, in Subsection \ref{sec:converge_optimal}. Although limited in scope, this result covers some very important manifolds of practical interest: $\mathbb{S}^{n}$ the unit sphere in $\mathbb{R}^{n+1}$, $SO(3)$ the group of rotations in $\mathbb{R}^{3}$, and $\mathbb{RP}^{n}$ the real projective space in $\mathbb{R}^{n+1}$.   In Section \ref{sec:sub_optimal}, for manifolds of arbitrary (i.e., \textit{non-constant}) curvature, we derive two classes of sub-optimal convergence conditions (in comparison with Conjecture \ref{conj:optimal}): In Subsection \ref{sec:data_spread_compromize} we give a result (Theorem \ref{theorem:compromised_L2}) in which convergence is guaranteed at the expense of smaller spread of data points, whereas in Subsection \ref{sec:step_size_compromise} (Theorem \ref{theorem:compromise_step_size_L2}) the allowable step-size is compromised to guarantee convergence. Finally, in Section \ref{sec:RateOfConvergence}, we qualitatively identify data configurations for which the convergence is very fast or very slow. We conclude the paper in Section \ref{sec:conc} by giving some remarks about further research.

\section{Preliminaries, a conjecture, and prior work}\label{sec:preliminaries}
\subsection{Preliminaries on the Riemannian Center of Mass and the Gradient Descent Algorithm}
\subsubsection{Notations}\label{sec:notations}
Let $M$ be an $n$-dimensional complete\footnote{Our results are local in nature, but they hold in relatively large regions that are determined explicitly. Therefore, completeness of the manifold is not necessary and our results could be easily adapted to e.g., non-singular regions of a singular manifold. However, for this purpose the statements of our results could become rather cumbersome.} Riemannian manifold with distance function $d$.\footnote{In our definitions relating to Riemannian manifolds we mainly follow \cite{Sakai}.} We denote the tangent space at $x\in M$ by $T_{x}M$ and by $\langle\cdot,\cdot\rangle$ and $\|\cdot\|$ we mean the Riemannian structure and the corresponding norm, respectively (dependence on the base point is implicit and clear from the context). By a $C^{k}$ function in a subset of $M$ we mean a continuous function which is $k^{th}$ order continuously differentiable in the subset as commonly understood in differential geometry. For a function $f:M\rightarrow \mathbb{R}$, $\nabla f$ denotes its gradient vector field with respect to $\langle\cdot,\cdot\rangle$. We assume that the sectional curvatures of $M$ are bounded from above and below by $\Delta$ and $\delta$, respectively. The exponential map of $M$ at $x\in M$ is denoted by $\exp_{x}(\cdot):T_{x}\rightarrow M$ and its inverse (wherever defined) is denoted by $\exp_{x}^{-1}(\cdot)$. The injectivity radius of $M$ is denoted by $\textrm{inj}M>0$.  An open ball with center $o\in M$ and radius $\rho$ is denoted by $B(o,\rho)$ and its closure by $\bar{B}(o,\rho)$.

\subsubsection{Convex functions and sets in Riemannian manifolds}\label{sec:convexity}
Convexity plays an important rule in our developments and we have the following definition:
\begin{definition}\label{def:convex_fct}
Let $A$ be an open subset of $M$ such that every two points in $A$ can be connected by at least one geodesic of $M$ such that this geodesic lies entirely in $A$. Assume that $f:A\rightarrow \mathbb{R}$ is a continuous function. Then $f$ is called (strictly) convex if the composition $f\circ\gamma:[0,1]\rightarrow \mathbb{R}$ is (strictly) convex for any geodesic $\gamma:[0,1]\rightarrow A$. We say that $f$ is globally (strictly) convex if it is (strictly) convex in $M$.
\end{definition}
If $f$ is $C^{2}$ in $A$, then convexity (strict convexity) of $f$ is equivalent to  $\frac{\texttt{d}^{2}}{\texttt{d}t^{2}}f(\gamma(t))|_{t=0}\geq 0$ ($>0$), where $\gamma:[0,1]\rightarrow A$ is any geodesic in $A$.

An insightful fact is the following \cite{ConvexFunction_Yau}:
\begin{theorem}\label{theorem:Yau}
The only globally convex function in a compact Riemannian manifold is a constant function.
\end{theorem}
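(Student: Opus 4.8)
The plan is to use compactness to produce a global maximum of $f$, and then to use the one–dimensional convexity of $f$ along geodesics to show that $f$ cannot decrease away from that maximum in any direction, so that $f$ is forced to be constant.

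The one elementary ingredient I would record first is a fact about convex functions of one real variable: if $\phi:[a,b]\to\mathbb{R}$ is convex and attains its maximum at an \emph{interior} point $t_{0}\in(a,b)$, then $\phi$ is constant on $[a,b]$. To see this, given any $t_{1}\in[a,b]$ with $t_{1}\neq t_{0}$, pick $t_{2}$ on the opposite side of $t_{0}$ and write $t_{0}=\lambda t_{1}+(1-\lambda)t_{2}$ with $\lambda\in(0,1)$; convexity gives $\phi(t_{0})\leq\lambda\phi(t_{1})+(1-\lambda)\phi(t_{2})\leq\lambda\phi(t_{0})+(1-\lambda)\phi(t_{0})=\phi(t_{0})$, and since $\lambda,1-\lambda>0$ and $\phi(t_{1}),\phi(t_{2})\leq\phi(t_{0})$, equality forces $\phi(t_{1})=\phi(t_{0})$.

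Now for the manifold statement: since $M$ is compact, $f$ attains its maximum at some point $p\in M$. Let $q\in M$ be arbitrary. Because $M$ is complete (compactness gives completeness), Hopf–Rinow provides a minimizing geodesic from $p$ to $q$, and completeness lets us extend it a little beyond $p$ in the opposite direction; after an affine reparametrization onto $[0,1]$ we obtain a geodesic $\gamma:[0,1]\to M$ with $\gamma(s_{0})=p$ for some $s_{0}\in(0,1)$ and $\gamma(1)=q$. An affine reparametrization sends geodesics to geodesics and preserves convexity of the composed map, so by the global convexity hypothesis and Definition \ref{def:convex_fct} the function $f\circ\gamma$ is convex on $[0,1]$. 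It attains its maximum at the interior point $s_{0}$, hence by the one–variable fact it is constant; in particular $f(q)=f(\gamma(1))=f(\gamma(s_{0}))=f(p)$. As $q$ was arbitrary, $f$ is constant.

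There is no deep obstacle here; the only point requiring care is the geometric set-up — arranging that the maximizer $p$ sits in the \emph{interior} of a geodesic segment whose other endpoint is the prescribed target $q$, which is exactly what the extendability of geodesics (completeness) together with Hopf–Rinow supplies, and making the affine reparametrization onto $[0,1]$ explicit so as to match the convention in Definition \ref{def:convex_fct}. (Alternatively, one can avoid Hopf–Rinow altogether: the set $\{x\in M:f(x)=\max f\}$ is nonempty and closed, and by running the above argument on short geodesics emanating from a maximizer one sees it is also open, hence equal to the connected manifold $M$.)
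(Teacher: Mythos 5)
Your argument is correct. Note, however, that the paper does not prove Theorem \ref{theorem:Yau} at all: it is quoted as a known result with a citation to \cite{ConvexFunction_Yau}, so there is no in-paper proof to compare against. Your proof is the standard elementary one for the compact case and is complete as written: compactness (plus the continuity built into Definition \ref{def:convex_fct}) gives a maximizer $p$; geodesic completeness (Hopf--Rinow) lets you place $p$ in the interior of a geodesic segment ending at an arbitrary $q$; and the one-variable fact that a convex function on an interval attaining its maximum at an interior point is constant then forces $f(q)=f(p)$. The only hypotheses you use implicitly are that $M$ is connected (needed both for Hopf--Rinow connecting $p$ to $q$ and for your alternative open-closed argument; this is implicit in the paper's standing assumption that $M$ carries a distance function $d$) and that Definition \ref{def:convex_fct} asks convexity along \emph{every} geodesic, not just minimizing ones, which is what legitimizes using the extended, possibly non-minimizing, segment through $p$ --- and indeed the definition is stated for arbitrary geodesics. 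For context, the cited result of Yau is stronger than what the paper states (it covers, e.g., complete manifolds of finite volume), but for the compact case invoked here your direct argument suffices and is more elementary.
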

It is more convenient to limit ourselves to \textit{strongly convex} subsets of $M$ because they are quite similar to standard convex sets in $\mathbb{R}^{n}$:
\begin{definition}
A set $A\subset M$ is called \textit{strongly convex} if any two points in $A$ can be connected by a unique minimizing geodesic in $M$ and the geodesic segment lies entirely in $A$.
\end{definition}
Define
\begin{equation}\label{eq:rcx}
r_{\textrm{cx}}=\frac{1}{2}\min\{\textrm{inj}M,\frac{\pi}{\sqrt{\Delta}}\},
\end{equation}
with the convention that $\frac{1}{\sqrt{\Delta}}=\infty$ for $\Delta\leq 0$. An open ball $B(o,\rho)$ with $\rho\leq r_{\textrm{cx}}$ is strongly convex in $M$; the same holds for any closed ball $\bar{B}(o,\rho)$ if $\rho<r_{\textrm{cx}}$ (see e.g., \cite[p. 404]{Sakai} and \cite[pp. 168-9]{Chavel}). In fact, $B(o,\rho)$ with $\rho\leq r_{\textrm{cx}}$ is even more similar to a convex set in Euclidean space: for any $x,y\in B(o,\rho)$ the minimal geodesic between from $x$ to $y$ is the only geodesic connecting them which lies entirely in $B(o,\rho)$.

\subsubsection{Differentiability and convexity properties of the distance function and estimates on its Hessian}\label{sec:Hessian}
Now, we briefly give some facts about the Riemannian distance function which will be used throughout the paper. Let $y\in M$. The function $x\mapsto d(x,y)$ is continuous for every $x\in M$. However, it is differentiable (in fact $C^{\infty}$) in $M\setminus (\{y\}\cup \mathcal{C}_{y})$, where $\mathcal{C}_{y}$ is the cut locus of $y$ (see e.g., \cite[pp. 108-110]{Sakai}). We recall the notion of cut locus: Let $\tilde{D}_{y}\in T_{x}M$ be the largest domain containing the origin of $T_{y}M$ in which $\exp_{y}:T_{y}\rightarrow M$ is a diffeomorphism, and let $\tilde{C}_{y}$ be the boundary of $\tilde{D}_{y}$. Then $\mathcal{C}_{y}=\exp_{y}(\tilde{C}_{y})$ is called the \textit{cut locus} of $y$ and one has $M=\exp_{y}(\tilde{D}_{y}\cup \tilde{C}_{y})$ (see e.g., \cite[pp. 117-118]{Chavel} or \cite[p. 104]{Sakai}). The distance between $y$ and $\mathcal{C}_{y}$ is called the injectivity radius of $y$ denoted by $\textrm{inj}y$, and by definition $\textrm{inj}M=\inf_{y\in M}\textrm{inj}y$. It is well known that $\mathcal{C}_{y}$ has measure zero in $M$. On the unit sphere $\mathbb{S}^{n}$ the cut locus of every point is its antipode. In a general manifold $M$ the differentiability property of $x\mapsto d(x,y)$ at $x=y$ is similar to the case where $M=\mathbb{R}^{n}$ equipped with the standard Euclidean metric; in particular, $x\mapsto \frac{1}{2}d^{2}(x,y)$ is a $C^{\infty}$ function in $M\setminus\mathcal{C}_{y}$. However, the behavior at far away points (e.g., the cut locus) is of substantially different nature and depends on the topology and curvature of $M$ (recall that in Euclidean space the cut locus of every point is empty).

Next, we recall some useful estimates about the Hessian of the Riemannian distance function. We adopt the following definitions:
\begin{equation}\label{eq:cot_hessian}
\begin{array}{cc}
\textrm{sn}_{\kappa}(l)=\left\{
\begin{array}{ll}
\frac{1}{\sqrt{\kappa}}\sin(\sqrt{\kappa} l)&\kappa > 0\\
\frac{1}{l}&\kappa =0\\
\frac{1}{\sqrt{|\kappa|}}\sinh(\sqrt{|\kappa|} l) &\kappa<0
\end{array}\right.
&
\textrm{ct}_{\kappa}(l)=\left\{
\begin{array}{ll}
\sqrt{\kappa}\cot(\sqrt{\kappa} l)&\kappa > 0\\
\frac{1}{l}&\kappa =0\\
\sqrt{|\kappa|}\coth(\sqrt{|\kappa|} l) &\kappa<0,
\end{array}\right.
\end{array}
\end{equation}
and
\begin{equation}\label{eq:c_kappa}
\begin{array}{cc}
\textrm{b}_{\kappa}(l)=\left\{
\begin{array}{ll}
\sqrt{\kappa}l\cot(\sqrt{\kappa} l) &\kappa \geq 0\\
1 &\kappa<0
\end{array}\right.
&
\textrm{c}_{\kappa}(l)=\left\{
\begin{array}{ll}
1 &\kappa \geq 0\\
\sqrt{|\kappa|}l\coth(\sqrt{|\kappa|} l) &\kappa<0.
\end{array}\right.
\end{array}
\end{equation}

Assume that $x\in M$ is such that $d(x,y)<\min\{\textrm{inj}y,\frac{\pi}{\sqrt{\Delta}}\}$.\footnote{Instead of this condition, it is often more convenient to require $d(x,y)<\min\{\textrm{inj}M,\frac{\pi}{\sqrt{\Delta}}\}$, which is a more conservative yet global version of the condition.} Furthermore, assume that $t\mapsto \gamma(t)$ with $\gamma(0)=x$ is a unit speed geodesic making, at $x$, an angle $\beta$ with the \textit{minimal} geodesic from $y$ to $x$. It can be proved that (see e.g., \cite[pp. 152-154]{Sakai})
\begin{equation}\label{eq:hessian_d_bounds}
\textrm{ct}_{\Delta}(d(x,y))\sin^{2}\beta\leq \frac{\mathrm{d}^{2}}{\mathrm{d}t^{2}}d(\gamma(t),y)\big|_{t=0}\leq \textrm{ct}_{\delta}(d(x,y))\sin^{2}\beta,
\end{equation}
where $\textrm{ct}_{\kappa}$ is defined in (\ref{eq:cot_hessian}). Based on the above one can verify that
\begin{equation}\label{eq:hessian_f2_bounds}
\textrm{b}_{\Delta}(d(x,y))\leq \frac{\mathrm{d}^{2}}{\mathrm{d}t^{2}}\big(\frac{1}{2}d^{2}(\gamma(t),y)\big)\big|_{t=0}\leq \textrm{c}_{\delta}(d(x,y)),
\end{equation}
and more generally that
\begin{equation}\label{eq:hessian_fp_bounds}
d^{p-2}(x,y)\min\{p-1,\textrm{b}_{\Delta}(d(x,y))\}\leq \frac{\mathrm{d}^{2}}{\mathrm{d}t^{2}}\big(\frac{1}{p}d^{p}(\gamma(t),y)\big)\big|_{t=0}\leq d^{p-2}(x,y)\max\{p-1,\textrm{c}_{\delta}(d(x,y))\}.
\end{equation}
Notice that the above confirms the intuition that the differentiability behavior of $x\mapsto \frac{1}{p}d^{p}(x,y)$ at $y=x$ is the same in $M$ and in $\mathbb{R}^{n}$. We will use these two relations very often; in doing so it is useful to have in mind that $\textrm{b}_{\kappa}(l)$ and $\textrm{c}_{\kappa}(l)$, respectively, are decreasing and increasing in $l$.
\begin{remark}
We emphasize that the requirement $d(x,y)<\textrm{inj}y$ is essential and cannot be compromised, as at a cut point the distance function becomes non-differentiable. Although in the left hand side of the above bounds only $\Delta$ appears explicitly both the curvature and topology of $M$ determine the convexity properties of the distance function. Notice that $\Delta$ gives (some) information about the Riemannian curvature tensor of $M$ and $\textrm{inj}y$ (or $\textrm{inj}M$) gives (some) information about the topology of $M$. For example, $\mathbb{R}$ and the unit circle $\mathbb{S}^{1}$ both have zero sectional curvature, while $\textrm{inj}\mathbb{S}^{1}=\pi$ and the injectivity radius of $\mathbb{R}$ is infinity. Now obviously $x\mapsto \frac{1}{2}d^{2}(x,y)$ in $\mathbb{R}$ is globally convex, while in $\mathbb{S}^{1}$ it is not (as seen directly or as a consequence of Theorem \ref{theorem:Yau}). The interesting fact is that $x\mapsto \frac{1}{2}d^{2}(x,y)$  has positive definite Hessian in $\mathbb{S}^{1}\setminus \{y'\}$, where $y'$ is the antipodal point of $y$. At $y'$, $x\mapsto \frac{1}{2}d^{2}(x,y)$ is not differentiable. This non-differentiability is so severe that in spite of the positivity of its Hessian at all other points $x\mapsto \frac{1}{2}d^{2}(x,y)$ is far from a globally convex function on $\mathbb{S}^{1}$. As an example of a less severe non-differentiability notice that $x\mapsto d(x,y)$ in $\mathbb{R}$ is also non-differentiable at $x=y$, yet this does not affect its convexity.
\end{remark}

\subsubsection{Riemannian $L^{p}$ center of mass}\label{sec:Lp_center}
We start by the following definition:
\begin{definition}\label{def:center_of_mass}
The (global) the Riemannian $L^{p}$ center of mass or mean (a.k.a. Fr\'{e}chet mean) of the data set $\{x_{i}\}_{i=1}^{N}\subset M$ with respect to weights $0\leq w_{i}\leq 1$ ($\sum_{i=1}^{N}w_{i}=1$) is defined as the minimizer(s) of
\begin{equation}\label{eq:fp}
f_{p}(x)=\left\{\begin{array}{lc}
\frac{1}{p}\sum_{i=1}^{N} w_{i} d^{p}(x,x_{i}) &1\leq p<\infty\\
\max_{i}d(x,x_{i})& p=\infty,\\
\end{array}
\right.
\end{equation}
in $M$. We denote the center by $\bar{x}_{p}$. We call a local minimizer of $f_{p}$ (which is not global) a local center of mass of the data set with respect to the weights.\footnote{A local minimizer of $f_{p}$ in $M$ is sometimes called a Karcher mean, although this definition bears little relation with the way Grove and Karcher \cite{Grove3} or Karcher \cite{Karcher} originally defined what they called the Riemannian center of mass (see also \cite{BijanLp} for more details). Given $\{x_{i}\}_{i=1}^{N}\subset B(o,\rho)$ with small enough $\rho$ those authors defined the center of mass as a zero of $\nabla f_{2}$ in $\bar{B}(o,\rho)$ or alternatively as a local minimizer of $f_{2}$ in $\bar{B}(o,\rho)$ (Notice that given that $f_{2}$ is not differentiable at the cut locus of each data point it is not a-priori clear that, in general, a local minimizer of $f_{2}$ in $M$ should coincide with a zero of $\nabla f_{2}$, although there are some evidence in the literature that this situation might not happen, see e.g,. \cite{Circle_Huckemann}, \cite{Circle_Charlier}). Overall, there is no consensus among authors about the terminology and we find it more convenient to use ``local'' and ``global'' Riemannian center of mass as defined here.}
\end{definition}
The reader is referred to \cite{BijanLp} for details and other related definitions. As a convention when referring to the center of mass of some data points we usually do not refer to explicit weights unless needed. As another convention when $p$ is not specified we assume $p=2$, which is the most commonly used case.  Although $p=1$ and $p=\infty$ are also used often, in this paper our focus is limited to $2\leq p<\infty$.\footnote{Going from $p=2$ to $2<p<\infty$ does not introduce any major technical challenge, so our analysis can be applied almost uniformly to $2\leq p<\infty$. However, since the results for $p=2$ are easier to state and presumably used more often, we usually state a result for $p=2$ and then give the more general version for $2\leq p<\infty$. The only exception is Theorem \ref{theorem:compromise_step_size_L2} for which we do not give a $2<p<\infty$ version.} The reason is that in our analysis we require $f_{p}$ to be twice-continuously differentiable and we determine the constant step-size of the gradient algorithm in terms of the upper bounds on the eigenvalues Hessian of $f_{p}$. As in Euclidean case, in the more general Riemannian case also one can see from (\ref{eq:hessian_fp_bounds}) that for $1\leq p<2$ the Hessian of $f_{p}$ might be unbounded. It is well known that Lipschitz continuous Hessian (in particular bounded Hessian) is necessary for the convergence of a gradient descent method with constant-step size \cite{Polyak}.

Although $f_{p}:M\rightarrow \mathbb{R}$ is a globally convex function when $M$ is a Euclidean space (or more generally an Hadamard manifold), $f_{2}$ is not globally convex for an arbitrary $M$. In particular, the center of mass might not be unique; however, if the data points are close enough, then the center is unique. The following theorem gives sufficient conditions for existence and uniqueness of the Riemannian center of mass.

\begin{theorem}\label{theorem:exist_unique}
Let $\{x_{i}\}_{i=1}^{N}\subset B(o,\rho)$ and assume $0\leq w_{i}\leq 1$ with $\sum_{i=1}^{N}w_{i}=1$.  For $p\geq 2$, if $\rho\leq r_{\textrm{cx}}$, then the Riemannian $L^{p}$ center of mass $\bar{x}_{p}$ is unique and is inside $B(o,\rho)$. For $2\leq p<\infty$ it is the unique zero of the gradient vector field $\nabla f_{p}$ in $\bar{B}(o,\rho)$, and moreover if no data point has weight $1$, then $\bar{x}_{p}$ is a non-degenerate critical point of $f_{p}$ (i.e., the Hessian of $f_{p}$ at $\bar{x}_{p}$ is positive-definite). \footnote{In Theorem 2.1 of \cite{BijanLp}, the condition on $\rho$ is stated as $\rho<r_{\textrm{cx}}$, but since we have finite number of data points the current version follows immediately. Also from the statement of Theorem 2.1, $\bar{x}_{p}$ is the only zero of $\nabla f_{p}$ in $B(o,\rho)$, but from the proof of the theorem it can be seen that the vector field $-\nabla f_{p}$ is inward-pointing on the boundary of $B(o,\rho)$ and hence $\bar{x}_{p}$ is the only zero in entire $\bar{B}(o,\rho)$. In addition, the fact about non-degeneracy is not present in the statement of Theorem 2.1 of \cite{BijanLp}, however it is proved in the proof of the theorem.}
\end{theorem}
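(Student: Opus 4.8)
The plan is to work entirely inside the strongly convex ball $\bar B(o,\rho)$ with $\rho\le r_{\textrm{cx}}$, where every pair of points is joined by a unique minimizing geodesic lying in the ball, and to exploit the Hessian bound \eqref{eq:hessian_fp_bounds}. First I would establish \emph{strict convexity} of each summand $x\mapsto \tfrac1p d^p(x,x_i)$ on $\bar B(o,\rho)$. Since $\rho\le r_{\textrm{cx}}\le\tfrac12\min\{\textrm{inj}M,\pi/\sqrt\Delta\}$, for any $x,x_i\in\bar B(o,\rho)$ we have $d(x,x_i)\le 2\rho\le\min\{\textrm{inj}M,\pi/\sqrt\Delta\}$, so the distance function is $C^\infty$ away from $x=x_i$ and the lower bound in \eqref{eq:hessian_fp_bounds} applies. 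The left-hand side $d^{p-2}(x,x_i)\min\{p-1,\textrm{b}_\Delta(d(x,x_i))\}$ is \emph{strictly} positive whenever $x\ne x_i$ (here is where $2\rho<\pi/\sqrt\Delta$ matters, keeping $\textrm{b}_\Delta$ positive, with the boundary case $2\rho=\pi/\sqrt\Delta$ requiring a small separate argument — or just the observation that with finitely many data points one can shrink $\rho$ slightly as noted in the footnote). At $x=x_i$ the function $x\mapsto\tfrac1p d^p(x,x_i)$ is still $C^2$ with Hessian $\ge 0$ (in fact, for $p=2$ the Hessian there is the identity). Summing over $i$ with weights $w_i$, and noting that the data points are not all equal unless $N=1$, gives that $f_p$ is convex on $\bar B(o,\rho)$, and strictly convex along any geodesic that avoids passing through all data points simultaneously — in particular $f_p$ has at most one minimizer in $\bar B(o,\rho)$.

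Next I would prove \emph{existence} of a minimizer inside the open ball and that $-\nabla f_p$ points strictly inward on the boundary sphere $\partial B(o,\rho)$. Continuity of $f_p$ on the compact set $\bar B(o,\rho)$ gives a minimizer $\bar x_p\in\bar B(o,\rho)$; to push it into the interior and to get the boundary behavior, consider the $C^\infty$ function $\phi(x)=\tfrac12 d^2(x,o)$ on $\bar B(o,\rho)$ and compute $\langle\nabla f_p(x),\nabla\phi(x)\rangle$ for $x\in\partial B(o,\rho)$. Writing this inner product as a sum of terms $w_i\,d^{p-1}(x,x_i)\,\langle u_i,\nabla\phi(x)\rangle$ where $u_i$ is the unit initial velocity of the minimal geodesic from $x$ toward $x_i$, a first-variation computation together with the first Rauch-type comparison (or directly the fact that $\bar B(o,\rho)$ is strongly convex so the minimal geodesic from $x$ to $x_i$ immediately enters the ball) shows each such term is nonnegative and the term coming from $x_i=o$-side geometry is strictly positive; hence $\langle\nabla f_p(x),\nabla\phi(x)\rangle>0$, i.e. $-\nabla f_p$ is inward-pointing. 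This simultaneously forces any zero of $\nabla f_p$ in $\bar B(o,\rho)$ to lie in the open ball, forces the minimizer $\bar x_p$ into the open ball (so it is a genuine interior critical point, $\nabla f_p(\bar x_p)=0$), and — combined with strict convexity — shows $\bar x_p$ is the \emph{unique} zero of $\nabla f_p$ in all of $\bar B(o,\rho)$: a second zero would give, along the geodesic joining the two zeros, a convex function whose derivative vanishes at both ends, contradicting strict convexity of $f_p$ along that geodesic (which holds since not all data points can sit on that single geodesic segment simultaneously when $N\ge 1$ and weights are genuine).

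Finally, for the \emph{non-degeneracy} claim when no $w_i$ equals $1$: evaluate the Hessian of $f_p$ at $\bar x_p$ applied to an arbitrary unit tangent vector $v\in T_{\bar x_p}M$, using \eqref{eq:hessian_fp_bounds} summand-by-summand. Each summand contributes a nonnegative amount, and the summand for index $i$ contributes \emph{strictly} positively unless $\bar x_p=x_i$ or $v$ is parallel to the geodesic direction from $x_i$ to $\bar x_p$ (the $\sin^2\beta$ factor from \eqref{eq:hessian_d_bounds}); but $\bar x_p$ cannot coincide with all $x_i$ (else some weight-$1$ situation or all points equal, excluded), and the directions from distinct $x_i$'s to $\bar x_p$ cannot all be collinear with a fixed $v$ unless essentially all mass is concentrated at two antipodal-through-$\bar x_p$ configurations, which the hypothesis $w_i<1$ for all $i$ together with the spread bound $\rho\le r_{\textrm{cx}}$ rules out (the data all lie in a ball of radius $\rho\le r_{\textrm{cx}}$, so no two of them are at distance $\ge\pi/\sqrt\Delta$ through $\bar x_p$). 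Hence the Hessian at $\bar x_p$ is positive definite. The main obstacle I anticipate is the boundary argument: cleanly showing $-\nabla f_p$ is strictly inward-pointing on $\partial B(o,\rho)$ requires a careful first-variation / comparison estimate controlling the angle between $\nabla\phi(x)$ and each $\nabla d(\cdot,x_i)(x)$, and handling the edge case $\rho=r_{\textrm{cx}}$ (where convexity of $\bar B(o,\rho)$ is only borderline) — though, as the footnote observes, the finiteness of the data set lets one replace $\rho$ by a slightly smaller radius and sidestep this.
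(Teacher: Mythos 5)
There is a genuine gap, and it sits exactly at the heart of the theorem. Your uniqueness argument rests on the claim that each summand $x\mapsto\tfrac1p d^{p}(x,x_{i})$ is convex (indeed strictly convex away from $x_{i}$) on $\bar{B}(o,\rho)$ because the lower bound in (\ref{eq:hessian_fp_bounds}) stays positive as long as $2\rho<\pi/\sqrt{\Delta}$. That is not what the bound gives: $\textrm{b}_{\Delta}(l)=\sqrt{\Delta}\,l\cot(\sqrt{\Delta}\,l)$ is positive only for $l<\pi/(2\sqrt{\Delta})$ and becomes negative on $(\pi/(2\sqrt{\Delta}),\pi/\sqrt{\Delta})$. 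Since $\rho\leq r_{\textrm{cx}}$ allows $\rho$ up to $\pi/(2\sqrt{\Delta})$, two points of $\bar{B}(o,\rho)$ can be at distance up to $\pi/\sqrt{\Delta}$, and in that range the Hessian of $d^{2}(\cdot,x_{i})$ in directions transverse to the geodesic to $x_{i}$ is genuinely negative (think of two nearly antipodal points on the equator of a hemisphere). Consequently $f_{p}$ restricted to $B(o,\rho)$ need not be convex when $\Delta>0$ and $\pi/(4\sqrt{\Delta})<\rho\leq r_{\textrm{cx}}$ — the paper says precisely this in the paragraph following the theorem and again in Remark \ref{rem:Grad_vs_Newton}. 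So your ``at most one minimizer by strict convexity,'' your ``two zeros of $\nabla f_{p}$ contradict strict convexity along the joining geodesic,'' and your summand-by-summand nonnegativity argument for positive-definiteness of the Hessian at $\bar{x}_{p}$ all break down exactly in the regime $\rho>\pi/(4\sqrt{\Delta})$ that distinguishes this statement from the classical Karcher-type result. What your argument does prove is the theorem under the stronger hypothesis $\rho\leq\tfrac12\min\{\textrm{inj}M,\pi/(2\sqrt{\Delta})\}$; shrinking $\rho$ slightly (the footnote's device) does not rescue you, because the obstruction is the factor $2$ in the curvature term, not the non-strict inequality.

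The boundary part of your proposal is sound and can be made rigorous along the lines you sketch: for $x\in\partial B(o,\rho)$ one shows $\langle\exp_{x}^{-1}x_{i},\exp_{x}^{-1}o\rangle>0$ using that $d(\cdot,o)$ \emph{is} convex on $\bar{B}(o,\rho)$ (its Hessian bound involves $\textrm{ct}_{\Delta}(d(\cdot,o))$ with $d(\cdot,o)\leq\rho\leq\pi/(2\sqrt{\Delta})$, so positivity does hold there), whence $-\nabla f_{p}$ is inward-pointing and every zero, and the minimizer, lies in the open ball — this matches what the footnote attributes to the proof in the cited reference. But the paper does not prove uniqueness or non-degeneracy here at all; it cites \cite{BijanLp}, whose argument for the full radius $r_{\textrm{cx}}$ is deliberately not a convexity argument. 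To close your gap you would need a replacement for convexity of $f_{p}$ on the large ball, e.g.\ the finer analysis of \cite{BijanLp} that exploits the structure of $\nabla f_{p}$ along geodesics between two putative critical points rather than positivity of the Hessian everywhere on the ball.
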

For a proof see \cite{BijanLp}. Also for $1\leq p<2$ see \cite{BijanLp} and \cite{Yang1}. In this paper, we are mainly interested in $2\leq p<\infty$, because the bounds we derive on the step-size of the gradient descent algorithm depend on an upper bound on the eigenvalues of the Hessian of $f_{p}$, and for $1\leq p<2$ (and $p=\infty$) the Hessian is unbounded. We refer the reader to \cite{Yang1} and \cite{Arnaudon_L_infty} for algorithms in the case of $p=1$ and $p=\infty$.

A rather subtle issue is that according to Theorem \ref{theorem:exist_unique}, if $\rho\leq r_{\textrm{cx}}$, then $f_{p}$ has a unique minimizer in $\bar{B}(o,\rho)$. Nevertheless, $f_{p}|_{B(o,\rho)}$, the restriction of $f_{p}$ to $B(o,\rho)$, might not be a convex function. More accurately, if $\rho\leq \frac{1}{2}\min\{\textrm{inj}M,\frac{\pi}{2\sqrt{\Delta}}\}$ (cf. (\ref{eq:rcx})), then $f_{p}|_{B(o,\rho)}$ is a convex function (this can be seen easily from (\ref{eq:hessian_f2_bounds}) by noting that $b_{\Delta}(x)>0$ for $x\in B(o,\rho)$). However, if $\Delta>0$ and $\frac{\pi}{4\sqrt{\Delta}}<\rho\leq r_{\textrm{cx}}$, then $f_{p}|_{B(o,\rho)}$ might not be a convex function, as can be seen by very simple examples. While this fact does not harm implementation and convergence of a gradient descent algorithm for finding $\bar{x}_{p}$ greatly, it has some serious implications on implementation and applicability of Newton's method (see Remarks \ref{rem:gradient_convexity} and \ref{rem:Grad_vs_Newton}).

\subsubsection{Gradient descent algorithm for finding the Riemannian center of mass}\label{sec:gradient_descent}
For later reference we derive the intrinsic gradient descent algorithm for locating the Riemannian $L^{p}$ center of mass (see \cite{AbsMahSep2008} or \cite{Udriste_Book} for introduction to optimization on Riemannian manifolds). One can check that
\begin{equation}\label{eq:grad_Lp}
\nabla f_{p}(x)=-\sum_{i=1}^{N} w_{i}d^{p-2}(x,x_{i})\exp_{x}^{-1}x_{i},
\end{equation}
for any $x\in M$ as long as it is not in the cut locus of any of the data points. In particular, if $\{x_{i}\}_{i=1}^{N}\subset B(o,\rho)$, where $\rho<\frac{\textrm{inj}M}{2}$, then for any $x\in B(o,\rho)$ the above expression is well-defined in the classical sense (i.e., it is uniquely defined). Notice  that the above expression is well-defined for almost every $x\in M$ because the set at which $f_{p}$ is not differentiable has measure zero (for $p>1$ this set is $\cup_{i} \mathcal{C}_{x_{i}}$ and for $p=1$ it is $\cup_{i} \mathcal{C}_{x_{i}}\cup \{x_{i}\}_{i}$). As we will see this non-differentiability has severe implications on the behavior of the constant step-size gradient descent.

Algorithm \ref{table:T1} in Table \ref{table:T1} is a gradient descent algorithm for locating the Riemannian center of mass of $\{x_{i}\}_{i=1}^{N}$.
\begin{table}[h]
  \centering
  \begin{tabular}{|l|}
  \hline
    \textbf{Algorithm 1: Gradient Descent for finding the Riemannian $L^{p}$ center of mass}\\
    \parbox[l]{10cm}{
 \begin{enumerate}
  \item Consider $\{x_{i}\}_{i=1}^{N}\subset B(o,\rho) \subset M$ and weights $\{w_{i}\}_{i=1}^{N}$ and choose $x^{0}\in M$.
  \item\label{item:step2} \texttt{if} $\nabla f_{p}(x^{k})=0$ \texttt{then} stop, \texttt{else set}
  \begin{equation}\label{eq:gradient_descent}
  x^{k+1}=\exp_{x^{k}}(-t_{k}\nabla f_{p}(x^{k}))
  \end{equation}
  where $t_{k}>0$ is an ``appropriate'' step-size and $\nabla f_{p}(\cdot)$ is defined in (\ref{eq:grad_Lp}).
  \item \texttt{goto} step \ref{item:step2}.

  \end{enumerate}
  }\\
  \hline
  \end{tabular}
  \caption{Gradient descent for finding the Riemannian $L^{p}$ center of mass.}\label{table:T1}
\end{table}
Besides practical considerations (e.g., stopping criterion), at least two important issues are left unspecified in Algorithm \ref{table:T1}, namely, how to choose $x^{0}$ and how to choose $t_{k}$ for each $k$. The most natural choice for $x^{0}$ is one point in $B(o,\rho)$, say one of the data points. Note that in practice $o$ and the exact value of $\rho$ might not be known.

The choice of $t_{k}$ is more complicated. The next general proposition gives a prescription for a step-size interval which ensures reducing the cost function at an iteration of a gradient descent provided one knows an upper bound on the eigenvalues of the Hessian of the cost function in a region which iterates live. The proof of this proposition follows from the second order Taylor series expansion (with remainder).

\begin{proposition}\label{prop:hessian_stepsize}
Let $x\in M$ and consider an open neighborhood $S\subset M$ containing $x$. Let $f:M\rightarrow \mathbb{R}$ be a function whose restriction to $S$ is twice-continuously differentiable and let the real number $H_{S}$ be an upper bound on the eigenvalues of the Hessian of $f$ in $S$. There exists $ t_{x,S}>0$ such that for all $t\in [0,t_{x,S})$ the curve $t\mapsto \exp_{x}(-t\nabla f(x))$ does not leave $S$ and
\begin{equation}\label{eq:hessian_reduction}
f(\exp_{x}(-t\nabla f(x)))\leq f(x)-\|\nabla f(x)\|^{2}t+ \frac{H_{S}\|\nabla f(x)\|^{2}}{2}t^{2}.
\end{equation}
For $t\in (0,\min\{t_{x,S},\frac{2}{H_{S}}\})$, with the convention that $\frac{1}{H_{S}}=+\infty$ for $H_{S}\leq 0$, we have $f(\exp_{x}(-t\nabla f(x)))\leq f(x)$ with equality only if $x$ is a critical point of $f$. Moreover, when $H_{S}>0$ the right hand side of (\ref{eq:hessian_reduction}) is minimized for $t=\frac{1}{H_{S}}$.
\end{proposition}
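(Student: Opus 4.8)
The plan is to derive everything from the second-order Taylor expansion of $f$ along the geodesic $\sigma(t) = \exp_x(-t\nabla f(x))$. First I would observe that since $f|_S$ is $C^2$ and $S$ is open, the geodesic $\sigma$ stays in $S$ for all sufficiently small $t$; take $t_{x,S} > 0$ to be (any value not exceeding) the supremum of times for which $\sigma([0,t)) \subset S$. On this interval the function $g(t) = f(\sigma(t))$ is $C^2$, with $g(0) = f(x)$, $g'(0) = \langle \nabla f(x), \sigma'(0)\rangle = -\|\nabla f(x)\|^2$ (using $\sigma'(0) = -\nabla f(x)$ and the defining property of the gradient), and $g''(s) = \mathrm{Hess}\,f(\sigma(s))(\sigma'(s), \sigma'(s))$ since $\sigma$ is a geodesic (so the acceleration term vanishes). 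Because $\|\sigma'(s)\| = \|\sigma'(0)\| = \|\nabla f(x)\|$ is constant along a geodesic and $H_S$ bounds the eigenvalues of the Hessian on $S$, we get $g''(s) \le H_S \|\nabla f(x)\|^2$ for all $s \in [0,t)$.

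Next I would apply Taylor's theorem with the Lagrange (or integral) form of the remainder to $g$ on $[0,t]$: $g(t) = g(0) + g'(0)\,t + \tfrac{1}{2} g''(\xi)\, t^2$ for some $\xi \in (0,t)$, and substitute the bounds just computed. This gives exactly inequality (\ref{eq:hessian_reduction}). For the monotonicity claim, I would treat the right-hand side of (\ref{eq:hessian_reduction}) as a quadratic $q(t) = f(x) - \|\nabla f(x)\|^2 t + \tfrac{H_S}{2}\|\nabla f(x)\|^2 t^2$ in $t$ and analyze it by elementary calculus: if $\nabla f(x) = 0$ everything is trivial (equality throughout); otherwise, if $H_S \le 0$ then $q$ is nonincreasing (affine or concave with negative initial slope) for all $t > 0$, and if $H_S > 0$ then $q(t) < f(x)$ precisely for $0 < t < \tfrac{2}{H_S}$, with the minimum of $q$ attained at $t = \tfrac{1}{H_S}$ (where $q'(t) = 0$). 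Combining with (\ref{eq:hessian_reduction}) yields $f(\sigma(t)) \le f(x)$ on the stated range, with equality forcing $\nabla f(x) = 0$, i.e.\ $x$ critical.

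The main obstacle, such as it is, is purely a matter of care rather than depth: one must make sure that the interval of validity is handled cleanly, namely that $t_{x,S}$ is chosen small enough that $\sigma([0,t)) \subset S$ so the Hessian bound actually applies along the whole segment, and that the bound $g''(s) \le H_S\|\nabla f(x)\|^2$ genuinely holds pointwise on that segment (this is where constancy of speed along a geodesic is used). There is also a minor subtlety in that $H_S$ is only assumed to be \emph{an} upper bound on the Hessian eigenvalues, not necessarily positive, which is why the convention $\tfrac{1}{H_S} = +\infty$ for $H_S \le 0$ is needed and why the quadratic analysis splits into cases. None of this requires any Riemannian-geometric input beyond the first- and second-order behavior of a $C^2$ function restricted to a geodesic, so the proof is short; I would present it as a direct computation followed by the elementary quadratic discussion.
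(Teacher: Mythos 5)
Your proposal is correct and is essentially the paper's own argument: the paper simply remarks that the proposition "follows from the second order Taylor series expansion (with remainder)," and your write-up carries out exactly that computation — Taylor expansion of $t\mapsto f(\exp_x(-t\nabla f(x)))$ along the geodesic, the Hessian eigenvalue bound combined with constancy of geodesic speed, and the elementary analysis of the resulting quadratic (including the $H_S\le 0$ convention and the minimizer $t=\tfrac{1}{H_S}$). No gaps; nothing further is needed.
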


Notice that the fact that for $t\in [0,t_{x,S})$ the curve $t\mapsto \exp_{x}(-t\nabla f(x))$ stays in $S$ is crucial in enabling us to use the upper bound $H_{S}$ and derive (\ref{eq:hessian_reduction}). This concept appears frequently in our analysis and it useful to have the following definition:
\begin{definition}
Let $x^{k}\in S\subset M$. We say that iterate $x^{k+1}$ of Algorithm \ref{table:T1} \textit{stays} in $S$ if $x^{k+1}=\exp_{x^{k}}(-t_{k}\nabla f_{p}(x^{k}))\in S$. We say that the iterate $x^{k+1}$ of Algorithm \ref{table:T1} \textit{continuously stays} in $S$ if $\exp_{x^{k}}(-s\nabla f_{p}(x^{k}))\in S$ for $s\in [0,t_{k}]$.
\end{definition}
Obviously, continuously staying in $S$ is stronger than staying in $S$. However, they are equivalent under some conditions (which hold in some, but not all, of the cases we study, see Remark \ref{rem:cont_stay}):
\begin{proposition}\label{prop:cont_stay}
If $S$ is a strongly convex set and $t_{k}\|\nabla f_{p}(x^{k})\|<\textup{\textrm{inj}}M$ for every $k\geq 0$, then for the iterates of Algorithm \ref{table:T1} staying in $S$ implies (and hence is equivalent to) continuously staying in $S$.
\end{proposition}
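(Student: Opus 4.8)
The plan is to show that the short geodesic arc traversed in a single gradient step is in fact a \emph{minimizing} geodesic between its two endpoints, and then to read off the conclusion from the definition of a strongly convex set.

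First I would dispose of the degenerate case $\nabla f_{p}(x^{k})=0$: by step (2) of Algorithm \ref{table:T1} the algorithm has then already terminated, and in any event the curve $s\mapsto\exp_{x^{k}}(-s\nabla f_{p}(x^{k}))$ is constant and equal to $x^{k}\in S$, so there is nothing to prove. Assume therefore $\nabla f_{p}(x^{k})\neq 0$ and set $\gamma(s)=\exp_{x^{k}}(-s\nabla f_{p}(x^{k}))$ for $s\in[0,t_{k}]$. This is a geodesic of constant speed $\|\nabla f_{p}(x^{k})\|$, so the arc $\gamma|_{[0,t_{k}]}$ has length $L:=t_{k}\|\nabla f_{p}(x^{k})\|$, and the hypothesis gives $L<\textrm{inj}M\le\textrm{inj}(x^{k})$ (recall $\textrm{inj}M=\inf_{y}\textrm{inj}\,y$). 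Equivalently, the tangent vector $-t_{k}\nabla f_{p}(x^{k})$ lies in the open ball of radius $\textrm{inj}(x^{k})$ in $T_{x^{k}}M$, which is contained in the domain $\tilde{D}_{x^{k}}$ on which $\exp_{x^{k}}$ is a diffeomorphism; on this ball the radial geodesics are the unique minimizers, so $\gamma|_{[0,t_{k}]}$ is the unique minimizing geodesic of $M$ joining $x^{k}$ to $x^{k+1}=\gamma(t_{k})$.

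Next I invoke strong convexity of $S$. Since $x^{k}\in S$ by assumption and $x^{k+1}\in S$ because the iterate \emph{stays} in $S$, the definition of a strongly convex set guarantees that the unique minimizing geodesic of $M$ joining $x^{k}$ and $x^{k+1}$ lies entirely in $S$. By the previous paragraph this geodesic is exactly $\gamma|_{[0,t_{k}]}$, hence $\gamma(s)\in S$ for all $s\in[0,t_{k}]$, i.e., $x^{k+1}$ \emph{continuously stays} in $S$. The reverse implication is immediate from the definitions, so under the stated hypotheses the two notions coincide, as claimed.

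I do not expect a genuine obstacle here; the proposition is essentially a bookkeeping statement. The one point that must be handled with care is the standard Riemannian fact that a geodesic emanating from a point and shorter than the injectivity radius at that point is the unique minimizing geodesic between its endpoints — this is precisely what lets us identify the step geodesic $\gamma$ with the geodesic supplied by strong convexity. It is also worth stressing that ``strongly convex'' must be used here in its full strength (uniqueness of the minimizing geodesic in $M$, together with its containment in $S$), since without that uniqueness the identification of $\gamma$ with the in-$S$ geodesic would break down.
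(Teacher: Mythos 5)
Your proof is correct and follows essentially the same route as the paper's: the step geodesic has length $t_{k}\|\nabla f_{p}(x^{k})\|<\textrm{inj}M$, hence is the (unique) minimizing geodesic between $x^{k}$ and $x^{k+1}$, and strong convexity of $S$ then forces it to lie entirely in $S$. The extra care you take with the degenerate case $\nabla f_{p}(x^{k})=0$ and with identifying the step geodesic with the geodesic supplied by strong convexity is consistent with, and slightly more explicit than, the paper's argument.
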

\begin{proof}
Assume that $x^{k}$ and $x^{k+1}$ both belong to $S$. Recall that $t\mapsto \exp_{x}(-t\nabla f_{p}(x^{k}))$ for $t\in [0,t_{k}]$ is a minimizing geodesic if $t_{k}\|\nabla f_{p}(x^{k})\|<\textrm{inj}M$. Therefore, by strong convexity of $S$, $t\mapsto \exp_{x}(-t\nabla f_{p}(x^{k}))$ for $t\in [0,t_{k}]$ must be the only minimizing geodesic between $x^{k}$ and $x^{k+1}$ and must lie in $S$ entirely.
\end{proof}

As mentioned before we are only interested in constant step-size gradient descent. The following convergence result is standard for this type of algorithms when the cost is $C^{2}$ (or at least has Lipschitz gradient) and is globally convex; however, our version is adapted to $f_{p}$ which, in general, is neither globally $C^{2}$ nor convex. The assumption of the theorem that each iterate of the algorithm continuously stay in a neighborhood $S$ of $\bar{x}_{p}$, in which is $\bar{x}_{p}$  is the only zero of $\nabla f_{p}$, is a crucial enabling ingredient of the proof. In fact, our goal in Sections \ref{sec:optmial} and \ref{sec:sub_optimal} is essentially to identify such a neighborhood (under certain conditions).
\begin{theorem}\label{theorem:convergence_general}
Let $2\leq p<\infty$ and assume that $\bar{x}_{p}$ is the center of mass of $\{x_{i}\}_{i=1}^{N}\subset B(o,\rho)$, where $\rho\leq r_{\textrm{cx}}$. Let $S$ be a bounded open neighborhood of $\bar{x}_{p}$ such that $f_{p}$ is $C^{2}$ in $S$ and $C^{1}$ in $\bar{S}$, the closure of $S$. Furthermore, assume that $\bar{x}_{p}$ is the only zero of the vector field $\nabla f_{p}$ in $\bar{S}$. Let $H_{S}$ be an upper bound on the eigenvalues of the Hessian of $f_{p}$ in $S$. In Algorithm \ref{table:T1} choose $t_{k}=t\in(0,\frac{2}{H_{S}})$. If starting from $x^{0}\in S$, each iterate of Algorithm \ref{table:T1} continuously stays in $S$, then $f_{p}(x^{k+1})\leq f_{p}(x^{k})$ for $k\geq 0$ with equality only if $x^{k}=\bar{x}_{p}$ and $x^{k}$ converges to $\bar{x}_{p}$.
\end{theorem}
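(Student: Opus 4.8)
The plan is to run the classical convergence argument for constant–step gradient descent, with the two ingredients that make the Riemannian, non‑convex setting work being (i) Proposition~\ref{prop:hessian_stepsize} applied along each step, and (ii) compactness of $\bar{S}$ together with the uniqueness of the zero of $\nabla f_{p}$.

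\emph{Step 1: monotone decrease.} Fix $k$ with $x^{k}\in S$. The hypothesis that $x^{k+1}$ continuously stays in $S$ means precisely that the geodesic segment $s\mapsto\exp_{x^{k}}(-s\nabla f_{p}(x^{k}))$, $s\in[0,t]$, lies entirely in $S$; along this segment the Hessian of $f_{p}$ is bounded above by $H_{S}$, so the second–order Taylor expansion with remainder (this is exactly the content of Proposition~\ref{prop:hessian_stepsize}, the continuous‑staying hypothesis being what licenses using $H_{S}$ along the whole segment) gives inequality~(\ref{eq:hessian_reduction}) for the \emph{actual} step‑size $t$. Hence
\[
f_{p}(x^{k+1})\le f_{p}(x^{k})-c\,\|\nabla f_{p}(x^{k})\|^{2},\qquad c:=t\Big(1-\tfrac{H_{S}t}{2}\Big),
\]
and $c>0$ since $t\in(0,2/H_{S})$ (trivially $c\ge t>0$ when $H_{S}\le 0$). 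Thus $f_{p}(x^{k+1})\le f_{p}(x^{k})$, with equality only if $\nabla f_{p}(x^{k})=0$; because $\bar{x}_{p}$ is the only zero of $\nabla f_{p}$ in $\bar{S}$, equality forces $x^{k}=\bar{x}_{p}$.

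\emph{Step 2: cost and gradient vanish in the limit.} Since $f_{p}\ge 0$, the nonincreasing sequence $\big(f_{p}(x^{k})\big)_{k}$ converges, so $f_{p}(x^{k})-f_{p}(x^{k+1})\to 0$; the inequality of Step~1 then yields $c\,\|\nabla f_{p}(x^{k})\|^{2}\le f_{p}(x^{k})-f_{p}(x^{k+1})\to 0$, hence $\|\nabla f_{p}(x^{k})\|\to 0$.

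\emph{Step 3: convergence of the iterates.} All iterates lie in $\bar{S}$, which is compact (a bounded closed subset of the complete manifold $M$). Given any subsequence of $(x^{k})$, extract a further subsequence $x^{k_{j}}\to x^{\ast}\in\bar{S}$; by continuity of $\nabla f_{p}$ on $\bar{S}$ (here we use $f_{p}\in C^{1}(\bar{S})$) and Step~2 we get $\nabla f_{p}(x^{\ast})=\lim_{j}\nabla f_{p}(x^{k_{j}})=0$, so $x^{\ast}=\bar{x}_{p}$ by the uniqueness hypothesis. Every subsequential limit of $(x^{k})$ therefore equals $\bar{x}_{p}$, and since $(x^{k})$ lives in a compact set this forces $x^{k}\to\bar{x}_{p}$.

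The only genuinely delicate point is Step~1: the descent estimate~(\ref{eq:hessian_reduction}) must hold for the prescribed step‑size and not merely for sufficiently small steps, which is exactly why the theorem builds in the assumption that each iterate continuously stays in $S$ — this guarantees the Hessian bound $H_{S}$ is valid along the entire geodesic joining $x^{k}$ to $x^{k+1}$. (Verifying that such a neighborhood $S$, with $\bar{x}_{p}$ as its only critical point, exists and that the iterates actually remain in it is the substantive question deferred to Sections~\ref{sec:optmial} and~\ref{sec:sub_optimal}.)
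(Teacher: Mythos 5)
Your proposal is correct and follows essentially the same route as the paper's proof: the descent inequality from Proposition~\ref{prop:hessian_stepsize} (justified by the continuous-stay hypothesis), summation/monotonicity to get $\|\nabla f_{p}(x^{k})\|\to 0$, and then compactness of $\bar{S}$, continuity of $\nabla f_{p}$, and uniqueness of the zero to conclude $x^{k}\to\bar{x}_{p}$. Your Step~3 is in fact slightly more careful than the paper's, since you explicitly note that every subsequential limit must equal $\bar{x}_{p}$ before concluding convergence of the whole sequence.
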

\begin{proof}
Since by assumption $x^{k}\in S$ for $k\geq 0$ and $\bar{S}$ is compact, there is a subsequence $\langle x^{k_{j}}\rangle_{k_{j}}$ converging to a point $x^{*}\in \bar{S}$. By Proposition \ref{prop:hessian_stepsize}  we have $f_{p}(x^{k+1})\leq f_{p}(x^{k})$ unless $x^{k}=\bar{x}_{p}$ and furthermore
\begin{equation}
t(1-\frac{H_{S}t}{2})\sum_{j=1}^{k}\|\nabla f_{p}(x^{j})\|^{2}\leq f_{p}(x^{k+1})-f_{p}(x^{0}),
\end{equation}
for every $k\geq 0$. Since $\langle f_{p}(x^{k})\rangle_{k}$ is a bounded sequence, the above implies that $\|\nabla f_{p}(x^{k})\|\rightarrow 0$; hence, by continuity of $\nabla f_{p}$ we have $\|\nabla f_{p}(x^{*})\|=0$, that is, $x^{*}$ is a zero of $\nabla f_{p}$ in $\bar{S}$. But by the assumption about $S$ this means that $x^{*}$ coincides with $\bar{x}_{p}$ and therefore $x^{k}\rightarrow \bar{x}_{p}$.
\end{proof}

Next, we give a very simple but insightful example.
\begin{example}[Finding the mean of two points on the unit circle $\mathbb{S}^{1}$]\label{ex:circle}
Let $M$ be the unit circle $\mathbb{S}^{1}$ centered at the origin $(0,0)$ and equipped with the standard arc length distance $d$. Recall that $\Delta=0$ and $\textrm{inj}M=\pi$.  Let $o$ denote the point $(1,0)$ (see Figure \ref{fig:circle_convergence}). We consider two data points $x_{1},x_{2}\in \mathbb{S}^{1}$ represented as $x_{i}=(\cos\theta_{i},\sin\theta_{i})$ ($i=1,2$) where $0<\theta_{1}<\rho\leq \frac{\pi}{2}$ and $\theta_{2}=-\theta_{1}$. We specify the weights and $\theta_{1}$ later. Under the mentioned assumption that $\rho\leq \frac{\pi}{2}$, Theorem \ref{theorem:exist_unique} guarantees that the center of mass $\bar{x}$ is unique and in fact one can see that $\bar{x}=(\cos\bar{\theta},\sin\bar{\theta})$ where $\bar{\theta}=w_{1}\theta_{1}+w_{2}\theta_{2}$. More importantly, it also follows that $\bar{x}$ is the unique zero of $\nabla f_{2}$ in $B(o,\rho)$ (as well as $B(o,\frac{\pi}{2})$). Notice that $f_{2}$ is smooth within $B(o,\pi-\theta_{1})$ (it does not follow from Theorem \ref{theorem:exist_unique} but it is an easily verifiable fact that $\bar{x}$ is the unique zero of $\nabla f_{2}$ in $B(o,\pi-\theta_{1})$). On the other hand, $f_{2}$ is not differentiable at $x_{1}'$ and $x_{2}'$, the antipodal points of $x_{1}$ and $x_{2}$, respectively. Furthermore, in $\mathbb{S}^{1}\setminus \{x_{1}',x_{2}'\}$  the Hessian of $f_{2}$ is defined and is equal to $1$, hence the largest possible range of the constant step-size $t_{k}=t$ is the interval $(0,2)$. Next, we see under what conditions Theorem \ref{theorem:convergence_general} applies.
It is easy to check that, independent of the weights, with $x^{0}\in B(o,\rho)$ and step-size $t_{k}=t\in (0,1]$ the iterates of Algorithm \ref{table:T1} continuously stay in $B(o,\rho)$. Therefore, an acceptable $S$ is the ball $B(o,\rho)$ and Theorem \ref{theorem:convergence_general} ensures convergence to the global center $\bar{x}$ provided step-size $t$ is in the interval $(0,1]$. This result is essentially not different from what we have in $\mathbb{R}$.
However, the situation for $t\in (1,2)$ is rather subtle since with a large step-size the iterates might leave the ball $B(o,\rho)$ or even $B(o,\pi-\theta_{1})$ and enter a region in which there is another zero of $\nabla f_{2}$. To be specific notice that $f_{2}$ can be parameterized with $\theta \in (-\pi,+\pi]$ as
\begin{equation}
f_{2}(\theta)=\frac{1}{2}\left\{
\begin{array}{ll}
w_{1}(\theta-\theta_{1}-2\pi)^{2}+w_{2}(\theta-\theta_{2})^{2}& -\pi< \theta\leq \theta_{1}-\pi \\
w_{1}(\theta-\theta_{1})^{2}+w_{2}(\theta-\theta_{2})^{2}& \theta_{1}-\pi\leq \theta\leq \theta_{2}+\pi \\
w_{1}(\theta-\theta_{1})^{2}+w_{2}(\theta-\theta_{2}+2\pi)^{2}& \theta_{2}+\pi\leq \theta\leq \pi. \\
\end{array}
\right.
\end{equation}
Now, let us fix $\theta_{1}=\frac{2\pi}{5}$ (and $\theta_{2}=-\frac{2\pi}{5}$). First, let $w_{1}=\frac{1}{10}$ and $w_{2}=\frac{9}{10}$. The solid curve in the right panel in Figure \ref{fig:circle_convergence} shows the graph of $f_{2}(\theta)$. The two cranks in the curve at $\theta_{1}'=\frac{-3\pi}{5}$ and $\theta_{2}'=\frac{3\pi}{5}$ are due to the non-differentiability of $f_{2}$ at antipodal points of $x_{1}$ and $x_{2}$. If we run Algorithm \ref{table:T1} with $x^{0}=x_{1}$ and step-size $t=\frac{25}{18}$ we have $x^{1}=x_{1}'$, thus $x^{1}$ coincides with a non-differentiable critical point of $f_{2}$ at which the algorithm is, in fact, not well-defined. In the generic setting the probability of this happening is zero; however, for larger $t$, $x^{1}$ will leave $B(o,\pi-\theta_{1})$. It can be seen that for this specific pair of weights $\nabla f_{2}$ has only one zero in $\mathbb{S}^{1}$. Consequently, in practice, Algorithm \ref{table:T1} for almost every initial condition in $\mathbb{S}^{1}$ and step-size $t_{k}=t$ in the interval $(0,2)$ will find the global center of mass $\bar{x}=(\cos\bar{\theta},\sin\bar{\theta})$, where $\bar{\theta}=\frac{-8\pi}{25}$ (this fact does not follow from Theorem \ref{theorem:convergence_general} but is not difficult to verify in this special case, see also Remark \ref{rem:local_vs_global}). But we might not be this lucky always! For example, let $w_{1}=\frac{1}{4}$ and $w_{2}=\frac{3}{4}$. The dashed curve in Figure \ref{fig:circle_convergence} show $f_{2}(\theta)$ for this pair of weights. One can verify that in addition to the global minimizer $\bar{\theta}=\frac{-\pi}{5}$, this time, $f_{2}(\theta)$ has a local minimizer at $\bar{\theta}'=\frac{-7\pi}{10}$. Now if we run Algorithm \ref{table:T1} with $x^{0}=x_{1}$ and constant step-size $t_{k}=t$ where $t=\frac{11}{6}$, then we have $x^{1}=\frac{-7\pi}{10}$, i.e., the next iterate coincides with the local center $\bar{x}'=(\cos\bar{\theta}',\sin\bar{\theta}')$ and the algorithm gets stuck at the wrong center! For values of $t$ slightly smaller or larger than  $\frac{11}{6}$ the algorithm still converges to $\bar{x}'$. Notice that this happens despite the fact that the cost is reduced at each iteration.\footnote{It would be interesting to see whether an example (in $\mathbb{S}^{1}$ or another manifold) exists in which due to the non-differentiability of $f_{2}$, we have $f_{2}(x^{1})>f_{2}(x^{0})$ if $x^{1}$ does not continuously stay in $S$. Such a phenomenon could lead to an oscillatory behavior (see Remark \ref{rem:local_vs_global}).} Although this simple example does not show the effect of the curvature of the manifold, still it makes it clear that in order for Algorithm \ref{table:T1} to have a predictable behavior that is as data-independent as possible it is important to identify conditions under which the assumptions of Theorem \ref{theorem:convergence_general} are satisfied (mainly that the iterates continuously stay in a candidate $S$). Our efforts in the following sections are primarily directed toward finding a set $S$, the radius $\rho$ of a ball containing the data points, and the step-size range which guarantee that the iterates continuously stay in $S$.

\end{example}

\begin{figure}[h]
  \begin{center}
  $
  \begin{array}{cc}
  \scalebox{.75}{\input{local_global_centers.pstex_t}} &
  \scalebox{.5}{\includegraphics{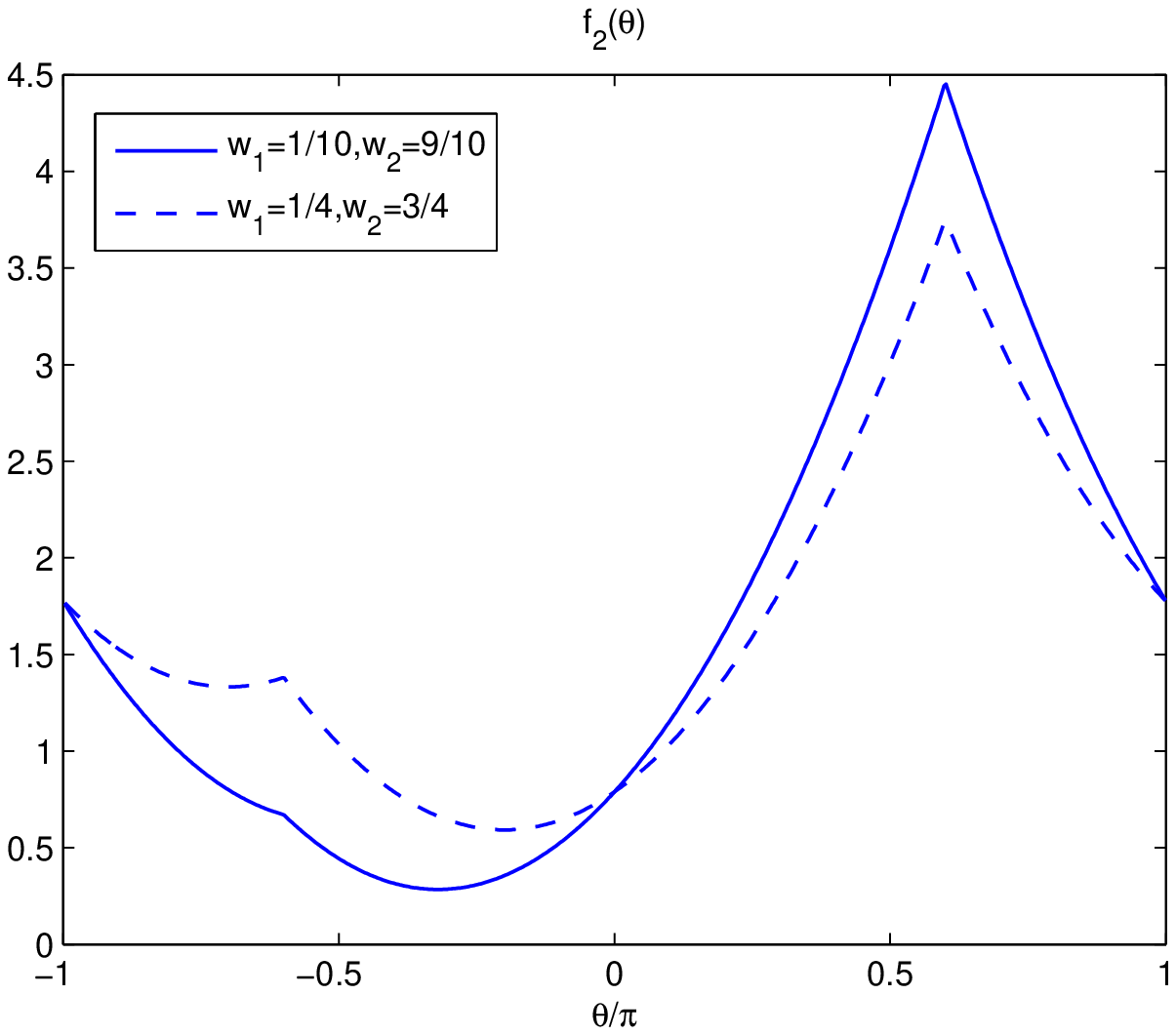}}\\
  \end{array}
  $
  \caption{The left panel shows the configuration of data points $x_{1}$ and $x_{2}$ in Example \ref{ex:circle} and the right panel is the graph of $f_{2}(\theta)$ for two different pairs of weights. The function $f_{2}$ is non-differentiable at $x_{1}'$ and $x_{2}'$, the antipodal points of $x_{1}$ and $x_{2}$. The example shows that if the step-size is large the iterates might leave the region in which $f_{2}$ is smooth and the algorithm might converge to $\bar{x}'$, a local center of $x_{1}$ and $x_{2}$, instead of the global center $\bar{x}$ (despite the fact that the cost is reduced at each step). Notice that the correct way of thinking about the plotted graphs it to visualize them while identifying points $\theta=-\pi$ and $\theta=+\pi$ or to think of them as periodic graphs with period $2\pi$.
  }\label{fig:circle_convergence}
\end{center}
\end{figure}
\begin{remark}[Local vs.global behavior]\label{rem:local_vs_global}
Our focus in this paper is on finding the global center of mass using a constant step-size gradient descent algorithm; hence, we only study the \textit{local behavior} of the algorithm (albeit in relatively large domains), and we leave the \textit{global behavior} analysis of the algorithm to further research. In this regard, one particular important question is: ``How (if possible at all) could one guarantee that for almost \textit{every} initial condition in $M$ the algorithm converges to \textit{a local} Riemannian center of mass?'' Notice that this is a relevant question, in particular, since the underling cost function is not differentiable globally and e.g., a-priori one could not rule out a possible oscillatory behavior.
\end{remark}

\subsubsection{Speed of convergence and the best step-size}\label{sec:speed_of_convergence}

In Proposition \ref{prop:hessian_stepsize}, $t=\frac{1}{H_{S}}$ is the best step-size in the sense that in each iteration it gives the most reduction in the upper bound of $f_{p}(x^{k+1})$ described in the right hand side of (\ref{eq:hessian_reduction}). The following theorem relates this choice to the speed of convergence of the algorithm. The proof of the theorem is adopted from \cite[p. 266, Theorem 4.2]{Udriste_Book} where a proof is given for a globally convex $C^{2}$ function. Here, we adapt that proof to constant step-size gradient descent for minimizing $f_{p}$ (which is only locally $C^{2}$ and convex).\footnote{It should be clear that nothing is special about $f_{p}$ and $\bar{x}_{p}$ in Theorems \ref{theorem:convergence_general} and \ref{theorem:rate_of_convergence}: Both theorems hold true if $f_{p}$ is replaced with any function $f:M\rightarrow \mathbb{R}$ which is twice-continuously differentiable in $S$ satisfying the respective assumptions of the theorems and $\bar{x}_{p}$ is replaced with a non-degenerate local minimizer of $f$ (in fact, for Theorem \ref{theorem:convergence_general} an isolated local minimizer suffices).}
\begin{theorem}\label{theorem:rate_of_convergence}
Set $2\leq p<\infty$. Let $\bar{x}_{p}$ be the $L^{p}$ center of mass of $\{x_{i}\}_{i=1}^{N}\subset B(o,\rho)\subset M$, where $\rho\leq r_{\textrm{cx}}$. Suppose that $S$ is a strongly convex neighborhood around $\bar{x}_{p}$ in which $f_{p}$ is twice-continuously differentiable, and let $h_{S}$ and $H_{S}$, respectively, denote a lower and upper bound on the eigenvalues of the Hessian of $f_{p}$ in $S$. Furthermore, assume that $S$ is small enough such that one can choose $h_{S}>0$. In Algorithm \ref{table:T1} choose a constant step-size $t_{k}=t\in (0,\frac{2}{H_{S}})$. If after a finite number of iterations $k'$ each iterate continuously stays in $S$, then for $k\geq k'$ we have
\begin{equation}\label{eq:rete_of_conv}
d(x^{k},\bar{x}_{p})\leq K q^{\frac{k-k'}{2}}.
\end{equation}
In the above $K$ and $q$ are defined as
\begin{equation}\label{eq:c_q}
K=\bigg(\frac{2(f_{p}(x^{k'})-f_{p}(\bar{x}_{p}))}{h_{S}}\bigg)^{\frac{1}{2}} ~\textrm{and} ~q=1-\alpha(1-\frac{\alpha}{2})\frac{h_{S}}{H_{S}}(1+\frac{h_{S}}{H_{S}}),
\end{equation}
where $\alpha=tH_{S}$. In particular, $0\leq q<1$ and $x^{k}\rightarrow \bar{x}_{p}$ as $k\rightarrow \infty$.
\end{theorem}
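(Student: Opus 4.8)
The plan is to combine the per-step descent inequality of Proposition \ref{prop:hessian_stepsize} with the two-sided Hessian bounds $h_S,H_S$ to show that the cost gap $f_p(x^k)-f_p(\bar{x}_p)$ contracts geometrically, and then to convert this into the distance estimate \eqref{eq:rete_of_conv} via a quadratic lower bound on $f_p$ around its non-degenerate minimizer. First I would invoke the hypothesis that for $k\ge k'$ each iterate continuously stays in $S$, so that Proposition \ref{prop:hessian_stepsize} applies at every such step and gives $f_p(x^{k+1})\le f_p(x^k)-\|\nabla f_p(x^k)\|^2 t+\tfrac{H_S}{2}\|\nabla f_p(x^k)\|^2 t^2 = f_p(x^k)-t(1-\tfrac{\alpha}{2})\|\nabla f_p(x^k)\|^2$, where $\alpha=tH_S\in(0,2)$ so the coefficient $t(1-\alpha/2)$ is positive. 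The second ingredient is a lower bound on $\|\nabla f_p(x^k)\|^2$ in terms of the cost gap: using the strong convexity of $S$ and the lower Hessian bound $h_S>0$ along the minimizing geodesic from $x^k$ to $\bar{x}_p$ (so that $f_p$ restricted to that geodesic is $h_S$-strongly convex and $C^2$), one gets the standard Polyak–Łojasiewicz-type inequality $\|\nabla f_p(x^k)\|^2\ge 2h_S\big(f_p(x^k)-f_p(\bar{x}_p)\big)$, and simultaneously the quadratic growth bound $f_p(x^k)-f_p(\bar{x}_p)\ge \tfrac{h_S}{2}d(x^k,\bar{x}_p)^2$ needed at the very end. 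Both follow from integrating \eqref{eq:hessian_f2_bounds}/\eqref{eq:hessian_fp_bounds} along that geodesic, or equivalently from a Taylor expansion with the Hessian bounded below by $h_S$; since $\bar{x}_p$ is the only zero of $\nabla f_p$ in $\bar S$ and a non-degenerate minimizer (Theorem \ref{theorem:exist_unique}), these are exactly the manifold analogues of the Euclidean facts.

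Next I would chain these: writing $e_k=f_p(x^k)-f_p(\bar{x}_p)\ge 0$, the descent inequality together with the PL inequality gives $e_{k+1}\le e_k - t(1-\tfrac{\alpha}{2})\cdot 2h_S e_k = \big(1-2th_S(1-\tfrac{\alpha}{2})\big)e_k$. I would then rewrite the contraction factor in terms of $\alpha=tH_S$ and the condition number ratio $h_S/H_S$: $1-2th_S(1-\alpha/2)=1-2\alpha(1-\alpha/2)h_S/H_S$. This is not yet literally the $q$ in \eqref{eq:c_q}, which carries the extra factor $(1+h_S/H_S)$ and a slightly different constant; the discrepancy is resolved by extracting a sharper one-step bound — rather than bounding $\|\nabla f_p(x^k)\|^2$ only from below, one keeps it and uses the companion upper bound $e_k \le \tfrac{1}{2h_S}\|\nabla f_p(x^k)\|^2$ together with a second estimate $e_{k+1}\le \tfrac{1}{2h_S}\|\nabla f_p(x^{k+1})\|^2$ and the relation between $\nabla f_p(x^{k+1})$ and $\nabla f_p(x^k)$ coming from the upper Hessian bound $H_S$ along the step (a mean-value/Taylor argument gives $\|\nabla f_p(x^{k+1})\|\le \|I-tH\|\,\|\nabla f_p(x^k)\|$-type control). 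Following the cited proof in \cite[p.\ 266, Thm.\ 4.2]{Udriste_Book} essentially verbatim — with ``globally convex'' everywhere replaced by ``convex along geodesics inside the strongly convex set $S$,'' which is legitimate precisely because the iterates continuously stay in $S$ — yields $e_{k}\le q^{k-k'}e_{k'}$ with exactly the stated $q$, and the bookkeeping also shows $0\le q<1$ whenever $0<\alpha<2$.

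Finally, I would close the loop: from $e_k\le q^{k-k'}e_{k'}$ and the quadratic growth bound $\tfrac{h_S}{2}d(x^k,\bar{x}_p)^2\le e_k$ we obtain $d(x^k,\bar{x}_p)\le \big(\tfrac{2e_{k'}}{h_S}\big)^{1/2}q^{(k-k')/2}=Kq^{(k-k')/2}$, which is \eqref{eq:rete_of_conv} with $K$ as in \eqref{eq:c_q}, and $q<1$ forces $x^k\to\bar{x}_p$. The main obstacle I anticipate is not any single inequality but making the Riemannian bookkeeping airtight: one must check that every geodesic segment invoked — the step geodesic $s\mapsto\exp_{x^k}(-s\nabla f_p(x^k))$ and the ``error'' geodesic from $x^k$ to $\bar{x}_p$ — lies inside $S$ where $f_p$ is $C^2$ with the two-sided Hessian bounds, which is exactly what ``continuously stays in $S$'' plus strong convexity of $S$ buys us, and that the Taylor-with-remainder arguments used in the Euclidean proof transfer with the curvature-dependent Hessian bounds \eqref{eq:hessian_fp_bounds} rather than constant ones; a minor additional point is verifying $h_S>0$ is achievable, which the theorem assumes and which is consistent with non-degeneracy of $\bar{x}_p$ from Theorem \ref{theorem:exist_unique} by shrinking $S$.
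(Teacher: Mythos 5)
Your proposal is correct, and its skeleton --- the one-step descent bound from Proposition \ref{prop:hessian_stepsize}, a gradient-dominance inequality coming from the Hessian lower bound $h_{S}$ along the minimizing geodesic from $x^{k}$ to $\bar{x}_{p}$ (which lies in $S$ by strong convexity), geometric decay of the cost gap, and finally the quadratic-growth bound $f_{p}(x^{k})-f_{p}(\bar{x}_{p})\geq \tfrac{h_{S}}{2}d^{2}(x^{k},\bar{x}_{p})$ --- is exactly how the paper adapts the proof from \cite{Udriste_Book}. Where you diverge is the constant, and you misread the direction of the discrepancy: the paper does not use the plain strong-convexity (gradient-dominance) bound $\|\nabla f_{p}(x)\|^{2}\geq 2h_{S}\,(f_{p}(x)-f_{p}(\bar{x}_{p}))$; instead it combines Taylor expansions at both endpoints of the geodesic from $x$ to $\bar{x}_{p}$ with $\|\nabla f_{p}(x)\|\geq h_{S}\,d(x,\bar{x}_{p})$ to obtain $\|\nabla f_{p}(x)\|^{2}\geq h_{S}\bigl(1+\tfrac{h_{S}}{H_{S}}\bigr)\bigl(f_{p}(x)-f_{p}(\bar{x}_{p})\bigr)$, which, since $1+\tfrac{h_{S}}{H_{S}}\leq 2$, is \emph{weaker} than your bound. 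Hence your first-pass contraction factor $c=1-2\alpha(1-\tfrac{\alpha}{2})\tfrac{h_{S}}{H_{S}}$ satisfies $q-c=\alpha(1-\tfrac{\alpha}{2})\tfrac{h_{S}}{H_{S}}\bigl(1-\tfrac{h_{S}}{H_{S}}\bigr)\geq 0$, so since the cost gaps are nonnegative you may simply replace $c$ by $q$ at each step and the stated estimate \eqref{eq:rete_of_conv} with the paper's $q$ in \eqref{eq:c_q} follows a fortiori from what you already proved (and $0\leq c\leq q<1$ because $2\alpha(1-\tfrac{\alpha}{2})=1-(1-\alpha)^{2}\leq 1$). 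The ``sharper one-step bound'' you sketch in your second paragraph --- controlling $\nabla f_{p}(x^{k+1})$ by $\nabla f_{p}(x^{k})$ through an $\|I-tH\|$-type estimate --- is therefore unnecessary, and as sketched it is also under-specified on a manifold, since comparing gradients at distinct points would require parallel transport; with that detour removed, your argument is complete and in fact yields a slightly better rate than the one stated.
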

\begin{proof}
Let $\gamma(t)=\exp_{x}(t\exp_{x}^{-1}\bar{x}_{p})$ be the minimal geodesic from $x\in S$ to $\bar{x}_{p}$. Note that $\gamma(t)\in S$ for $t\in [0,1]$ due to strong convexity of $S$. After writing the second order Taylor's series of $t\mapsto f_{p}(\gamma(t))$ around $t=0$ and using the bounds on the Hessian of $f_{p}$ one gets
\begin{equation}\label{eq:e1}
 -d(x,\bar{x}_{p})\|\nabla f_{p}(x)\|+\frac{h_{S}}{2}d^{2}(x,\bar{x}_{p}) \leq f_{p}(\bar{x}_{p})-f_{p}(x)\leq d(x,\bar{x}_{p})\|\nabla f_{p}(x)\|+\frac{H_{S}}{2}d^{2}(x,\bar{x}_{p}).
\end{equation}
Similarly by expansion of $t\mapsto f_{p}(\gamma(t))$ around $t=1$ one gets
\begin{equation}\label{eq:e2}
\frac{h_{S}}{2}d^{2}(x,\bar{x}_{p})\leq f_{p}(x)-f_{p}(\bar{x}_{p})\leq \frac{H_{S}}{2}d^{2}(x,\bar{x}_{p})
\end{equation}
Also notice that by the first order Taylor series expansion of $t\mapsto \nabla f_{p}(\gamma(t))$ around $t=1$ we have
\begin{equation}\label{eq:e3}
h_{S}d(x,\bar{x}_{p})\leq \|\nabla f_{p}(x)\|\leq H_{S} d(x,\bar{x}_{p})
\end{equation}
From the left inequality in (\ref{eq:e1}) and the right inequality in (\ref{eq:e2}) we have
\begin{equation}\label{eq:e4}
f_{p}(x)-f_{p}(\bar{x}_{p})\leq d(x,\bar{x}_{p})\|\nabla f_{p}(x)\|-\frac{h_{S}}{H_{S}}(f_{p}(x)-f_{p}(\bar{x}_{p})).
\end{equation}
Combining this with the left inequality in (\ref{eq:e3}) results in
\begin{equation}
h_{S}(1+\frac{h_{S}}{H_{S}}) \big(f_{p}(x)-f_{p}(\bar{x}_{p})\big)\leq \|\nabla f_{p}(x)\|^{2}.
\end{equation}
Now assume $k\geq k'$ so $x^{k},x^{k+1}\in S$. Then subtracting $f_{p}(\bar{x}_{p})$ from both sides of (\ref{eq:hessian_reduction}) and using (\ref{eq:e4}) both at $x=x^{k}$ yield
\begin{equation}
f_{p}(x^{k+1})-f_{p}(\bar{x}_{p})\leq q \big(f_{p}(x^{k})-f_{p}(\bar{x}_{p})\big).
\end{equation}
Therefore, we have
\begin{equation}
f_{p}(x^{k})-f_{p}(\bar{x}_{p})\leq \big(f_{p}(x^{k'})-f_{p}(\bar{x}_{p})\big) q^{k-k'},
\end{equation}
for $k\geq k'$. Now combining this with the left inequality in (\ref{eq:e2}) yields (\ref{eq:rete_of_conv}).
\end{proof}
This theorem predicts a \textit{lower bound} on the speed of convergence (i.e., the actual convergence is not worst than what the theorem predicts). The accuracy of this prediction, in part, depends on the accuracy of our estimates of the lower and upper bounds on the eigenvalues of the Hessian. Observe that when we are only given $H_{S}$, $\alpha=1$ (i.e., $t_{k}=\frac{1}{H_{S}}$) gives the smallest a-priori $q$. We call $t_{k}=\frac{1}{H_{S}}$ the \textit{best a-priori} step-size given $H_{S}$.

\begin{remark}[Asymptotic $q$]\label{rem:gradient_convexity}
Notice that a strongly convex $S$ which works in Theorem \ref{theorem:rate_of_convergence} might not work in Theorem \ref{theorem:convergence_general} (since the assumption $h_{S}>0$ might not hold true) and a smaller $S$ might be needed for this theorem. Nevertheless, if we start with an $S$ (and a corresponding $H_{S}$) for which Theorem \ref{theorem:convergence_general} holds, then there exists a smaller strongly convex $S'(\subset S)$ for which $h_{S'}>0$ and Theorem \ref{theorem:rate_of_convergence} holds. $H_{S}$ is still an upper bound on the eigenvalues of the Hessian of $f_{p}$ in $S'$ and in lack of any knowledge about $S'$ still $t=\frac{1}{H_{S}}$ is the best a-priori step-size. The actual asymptotic speed of convergence is determined by $q$ in a very small neighborhood $S'$ of $\bar{x}_{p}$. In fact, in the limit $h_{S'}$ and $H_{S'}$ converge, respectively, to $\lambda_{\textrm{min}}'$ and $\lambda_{\textrm{max}}'$ the smallest and largest eigenvalues of the Hessian of $f_{p}$ at $\bar{x}_{p}$. Therefore, for any step-size $t\in (0,\frac{2}{\lambda_{max}'})$, we define the associated \textit{asymptotic} $q$, denoted by $q'$, where in (\ref{eq:c_q}), $h_{S}$, $H_{S}$, and $\alpha$ are replaced, respectively, by $\lambda_{\textrm{min}}'$, $\lambda_{\textrm{max}}'$, and $\alpha'=t\lambda_{\textrm{max}}'$. Notice that if $t=\frac{1}{H_{S}}$, then $\alpha'<1$ and the smaller the $H_{S}$ the smaller the $q'$ will be. In general, ``smaller $H_{S}$'' means that either we make $S$ smaller or we choose a more accurate upper bound on the eigenvalues of the Hessian of $f_{p}$ in $S$.
\end{remark}
\subsection{A Conjecture: The Best Convergence Condition}\label{sec:Conjecture}
As mentioned before, reduction of the cost at each iteration is not enough to guarantee the convergence of Algorithm \ref{table:T1} to the global center of mass. Nevertheless, we conjecture that if the constant step-size is chosen not too large and the initial condition is not far from $\bar{x}_{p}$ (as specified next), then the cost $f_{p}$ can be reduced at each iteration, the iterates stay close to $\bar{x}_{p}$ and converge to it .
\begin{conjecture}\label{conj:optimal}
Set $p=2$ and let $\bar{x}_{2}$ be the $L^{2}$ center of mass of $\{x_{i}\}_{i=1}^{N}\subset B(o,\rho)\subset M$ where $\rho\leq r_{\textup{\textrm{cx}}}$. Let $H_{B(o,\rho)}={\textup{\textrm{c}}_{\delta}(2\rho)}$, where $\textup{\textrm{c}}_{\kappa}$ is defined in (\ref{eq:c_kappa}).
In Algorithm \ref{table:T1}, assume $x^{0}\in B(o,\rho)$ and choose a constant step-size $t_{k}=t$, for some 
$t\in (0, \frac{1}{H_{B(o,\rho)}}]$. Then we have the following: Each iterate continuously stays in $B(o,\rho)$ (and hence the algorithm will be well-defined for every $k\geq 0$), $f_{2}(x^{k+1})\leq f_{2}(x^{k})$ ($k\geq 0$) with equality only if $x^{k}=\bar{x}_{2}$,  and $x^{k}\rightarrow \bar{x}_{2}$ as $k\rightarrow \infty$. More generally, for $2\leq p<\infty$ the same results hold if $t\in (0, \frac{1}{H_{B(o,\rho),p}}]$, where $H_{B(o,\rho),p}=(2\rho)^{p-2}\max\{p-1,\textup{\textrm{c}}_{\delta}(2\rho)\}$.
\end{conjecture}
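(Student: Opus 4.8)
The statement decomposes into three ingredients, of which the second carries essentially all of the difficulty; I would organize everything around reducing to Theorem~\ref{theorem:convergence_general} with the candidate set $S=B(o,\rho)$.

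\emph{Step 1 (legitimacy of the Hessian bound and the regularity hypotheses).} First I would check that $H_{B(o,\rho),p}=(2\rho)^{p-2}\max\{p-1,\mathrm{c}_\delta(2\rho)\}$ --- and, for $p=2$, $H_{B(o,\rho)}=\mathrm{c}_\delta(2\rho)$, which agrees with it since $\mathrm{c}_\delta(2\rho)\ge1$ --- is an upper bound for the eigenvalues of the Hessian of $f_p$ throughout $B(o,\rho)$. Because $\rho\le r_{\mathrm{cx}}$, every $x\in B(o,\rho)$ satisfies $d(x,x_i)<2\rho\le\min\{\mathrm{inj}M,\pi/\sqrt\Delta\}\le\min\{\mathrm{inj}\,x_i,\pi/\sqrt\Delta\}$, so each summand $x\mapsto\frac1p d^p(x,x_i)$ is $C^2$ there and (\ref{eq:hessian_fp_bounds}) applies; monotonicity of $l\mapsto l^{p-2}\max\{p-1,\mathrm{c}_\delta(l)\}$ turns the right-hand side of (\ref{eq:hessian_fp_bounds}) into $H_{B(o,\rho),p}$, and taking the convex combination over $i$ gives the bound for $f_p$. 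The same estimates show $f_p$ is $C^2$ in $B(o,\rho)$ and $C^1$ on $\bar B(o,\rho)$, while Theorem~\ref{theorem:exist_unique} gives $\bar x_p\in B(o,\rho)$ as the unique zero of $\nabla f_p$ in $\bar B(o,\rho)$; hence $S=B(o,\rho)$ meets all standing hypotheses of Theorem~\ref{theorem:convergence_general}. (As a sanity check, when $\delta\ge0$ one has $\mathrm{c}_\delta\equiv1$, so $H_{B(o,\rho)}=1$ and the admissible range is $t\in(0,1]$, matching the footnote to the conjecture.)

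\emph{Step 2 (the iterates stay in $B(o,\rho)$ --- the crux).} I would prove by induction that $x^k\in B(o,\rho)$ forces $x^{k+1}=\exp_{x^k}(-t\nabla f_p(x^k))\in B(o,\rho)$, the base case being the hypothesis $x^0\in B(o,\rho)$. Two facts are immediate from $x^k\in B(o,\rho)$: the step length satisfies $t\|\nabla f_p(x^k)\|\le t\sum_i w_i d^{p-1}(x^k,x_i)<t(2\rho)^{p-1}\le 2\rho/\max\{p-1,\mathrm{c}_\delta(2\rho)\}\le 2\rho\le\mathrm{inj}M$ (needed in Step~3), and the search direction $-\nabla f_p(x^k)=\sum_i w_i d^{p-2}(x^k,x_i)\exp_{x^k}^{-1}x_i$ ``points toward'' the data cloud $\{x_i\}\subset B(o,\rho)$. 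The length bound alone is useless --- $2\rho$ is the full diameter of the ball --- so the plan is to exploit the direction: make precise that a geodesic step of size $t\le 1/H_{B(o,\rho),p}$ along $-\nabla f_p(x^k)$ lands inside the geodesic convex hull of $\{x^k,x_1,\dots,x_N\}$, which, being a subset of the strongly convex ball $B(o,\rho)$, has its convex hull inside $B(o,\rho)$. For a manifold of constant nonnegative curvature this is precisely the content of Theorem~\ref{theorem:sphere_L2}: one first proves the comparison Theorem~\ref{theorem:comparison_sphere}, certifying that such a weighted-geodesic-average step is ``no worse'' than in the Euclidean/spherical model, then reads the step through the Riemannian affine convex combination of Subsection~\ref{sec:convex_comb} and its convex-hull property. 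I expect this to be the hard part, and for general (non-constant) curvature I do not expect the argument to close: bounding the discrete overshoot toward $\partial B(o,\rho)$ using only the two-sided bounds $\delta\le\mathrm{sec}\le\Delta$ is exactly what resists, which is why the statement is offered as a conjecture and why Section~\ref{sec:sub_optimal} must instead shrink either $\rho$ or $t$.

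\emph{Step 3 (from ``stays'' to convergence).} Granting Step~2, the set $S=B(o,\rho)$ is strongly convex (since $\rho\le r_{\mathrm{cx}}$, cf.\ (\ref{eq:rcx})) and $t_k\|\nabla f_p(x^k)\|<\mathrm{inj}M$ for every $k$ by the estimate in Step~2, so Proposition~\ref{prop:cont_stay} upgrades ``stays in $S$'' to ``continuously stays in $S$''. Since $t\in(0,1/H_{B(o,\rho),p}]\subset(0,2/H_{B(o,\rho),p})$, Theorem~\ref{theorem:convergence_general} then yields $f_p(x^{k+1})\le f_p(x^k)$ with equality only at $x^k=\bar x_p$ and $x^k\to\bar x_p$, which is the assertion; moreover $t=1/H_{B(o,\rho),p}$ is the best a-priori step-size in the sense of Proposition~\ref{prop:hessian_stepsize} and Theorem~\ref{theorem:rate_of_convergence}.
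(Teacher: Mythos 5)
Be aware that the statement you were asked to prove is stated in the paper as a \emph{conjecture}: the paper itself contains no proof of it in general, only the special case of constant nonnegative curvature (Theorem \ref{theorem:sphere_L2}) and the weaker substitutes of Section \ref{sec:sub_optimal}. Your reduction is correct and coincides with the paper's own framing: Step 1 and Step 3 (the uniform Hessian bound $H_{B(o,\rho),p}$, uniqueness of the zero of $\nabla f_p$ in $\bar B(o,\rho)$ via Theorem \ref{theorem:exist_unique}, the step-length estimate $t\|\nabla f_p(x^k)\|<2\rho\leq \textrm{inj}M$ feeding Proposition \ref{prop:cont_stay}, and the appeal to Theorem \ref{theorem:convergence_general} with $S=B(o,\rho)$) are exactly the paper's Remark \ref{rem:cont_stay} and surrounding discussion, and your Step 2 is precisely what the paper declares to be the main open difficulty --- proving the iterates stay in $B(o,\rho)$ --- which is settled only for constant nonnegative curvature via Theorem \ref{theorem:comparison_sphere} and Theorem \ref{theorem:convex_combination_const_pos}, as you cite. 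So your proposal is not (and honestly does not claim to be) a proof of the full statement, but it is incomplete in exactly the one place the paper leaves open, and everything you do establish matches the paper's approach.
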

Now we explain the sense in which this conjecture is the best result one can hope for. We narrow down our desired class of convergence conditions to a class which gives conditions that are uniform in the data sets and in the initial condition. More specifically, we consider the following general and natural class of conditions:

\begin{quote}
Convergence Condition Class (C): Consider Algorithm \ref{table:T1} and fix $2\leq p<\infty$, and let $\delta$ and $\Delta$, respectively, be a lower and upper bound on sectional curvatures of $M$. Specify the largest $\bar{\rho}$ where $0<\bar{\rho}\leq r_{\textrm{cx}}$ such that for every $\rho\leq\bar{\rho}$ there are
\begin{enumerate}
\item  a number $\rho'_{\delta,\Delta,\rho,p}$ ($\rho\leq \rho'_{\delta,\Delta,\rho,p}\leq r_{\textrm{cx}})$ depending only on $\delta$, $\Delta$, $\rho$, and $p$; and
\item another number $t_{\delta,\Delta,\rho,\rho',p}$ depending only on  $\delta$, $\Delta$, $\rho$, $p$, and $\rho'_{\delta,\Delta,\rho,p}$,
\end{enumerate}
for which the following holds: for every ball $B(o,\rho)\subset M$, for every set of data points in $B(o,\rho)$, for every set of weights in (\ref{eq:fp}), for every initial condition in $B(o,\rho)$, and for every constant step-size $t_{k}=t\in (0,t_{\delta,\Delta,\rho,\rho',p}]$, each iterate of Algorithm \ref{table:T1} continuously stays in $B(o,\rho'_{\delta,\Delta,\rho,p})$ and $x^{k}\rightarrow \bar{x}_{p}$.
\end{quote}

First, notice that with $\bar{\rho}>r_\textrm{cx}$ there is no hope to have convergence to the global center (in this class), since the global center might not in $B(o,\bar{\rho})$ and in general $\nabla f_{p}$ might have more than one zero in $B(o,\bar{\rho})$. Next, observe that Conjecture \ref{conj:optimal} belongs to this class of conditions and it claims that $\bar{\rho}=r_{\textrm{cx}}$ is achievable; therefore, in this sense the conjecture claims the best possible condition in this class. In particular, this means that the conjecture gives the best possible spread of data points and the largest region of convergence, i.e., it allows both $\{x_{i}\}_{i=1}^{N}\subset B(o,r_{\textrm{cx}})$ and $x^{0}\in B(o,r_{\textrm{cx}})$.

Now let us see in what sense the step-size interval in Conjecture \ref{conj:optimal} is optimal. One can verify that $H_{B(o,\rho),p}$ in Conjecture \ref{conj:optimal} is the smallest \textit{uniform} upper bound on the eigenvalues of the Hessian of $f_{p}$ in $B(o,\rho)$. Here, by a uniform bound we mean a bound which is independent of the data points, the weights, and $o$. Based on Remark \ref{rem:gradient_convexity}, if we know that each iterate continuously stays in $B(o,\rho'_{\delta,\Delta,\rho,p})$ and further if we only know $H_{B(o,\rho'),p}$, then from Theorem \ref{theorem:rate_of_convergence} we see that $t_{k}=\frac{1}{H_{B(o,\rho'),p}}$ is the best uniform a-priori step-size (in the sense that it yields the smallest uniform a-priori $q$). Next, notice that, with this $t_{k}$, the smaller the $\rho'$, the smaller the $H_{B(o,\rho'),p}$ and hence the smaller the $q'$ (the asymptotic $q$) will be, see Remark \ref{rem:gradient_convexity}. Consequently, $t_{k}=\frac{1}{H_{B(o,\rho'),p}}$ at $\rho'=\rho$ gives the smallest \emph{asymptotic} $q$ among all  \emph{best uniform a-priori} step-sizes $t_{k}=\frac{1}{H_{B(o,\rho'),p}}$, where $\rho\leq \rho'\leq r_{\textrm{cx}}$. Conjecture \ref{conj:optimal} claims that, indeed, $\rho'_{\delta,\Delta,\rho,p}$ can be as small as $\rho$ (independent of $\delta,\Delta, p$). Therefore, in summary, among all conditions in class (C), the sub-class which prescribes $t_{\delta,\Delta,\rho,\rho',p}=\frac{1}{H_{B(o,\rho'_{\delta,\Delta,\rho,p}),p}}$ allows to have the smallest uniform a-priori $q$ (by choosing $t_{k}=\frac{1}{H_{B(o,\rho'_{\delta,\Delta,\rho,p}),p}}$), and in this sub-class, Conjecture \ref{conj:optimal} gives the largest $t_{\delta,\Delta,\rho,\rho',p}$, therefore it allows for the largest step-size and hence the \textit{smallest uniform asymptotic} $q$ in this sub-class. We stress that this sense of optimality of the step-size interval should not be construed as giving the best speed of convergence for any actual data configuration; rather it gives the best uniform lower bound on the speed of convergence in Theorem \ref{theorem:rate_of_convergence}.  This means that for all data configurations, weights, and initial conditions in $B(o,\rho)$ the actual speed of convergence will not be worse than the one predicted by Theorem \ref{theorem:rate_of_convergence} where the associated $q$ in (\ref{eq:c_q}) is the best uniform a-priori $q$. Quite similarly, one could argue that the step-size interval in Conjecture \ref{conj:optimal} is optimal in the sense that allows for the most reduction per iteration in the upper bound given in (\ref{eq:hessian_reduction}) of Proposition \ref{prop:hessian_stepsize}.

The proof of the conjecture in the case of manifolds with zero curvature is quite easy and straightforward. However, the general case seems to be difficult. The main difficulty in proving Conjecture \ref{conj:optimal} is in proving that the iterates continuously stay in $B(o,\rho)$. Nevertheless, in Theorem \ref{theorem:sphere_L2} we prove the conjecture for manifolds of constant nonnegative curvature.  As this proof suggests, we believe that the difficulty in proving this conjecture has more to do with geometry (than optimization) and the need of good estimates (which currently seem not to exist) about the behavior of the exponential map in a manifold. Our proof of Theorem \ref{theorem:sphere_L2} certainly constitutes some strong evidence that the conjecture also is true for  manifolds of nonnegative curvature. For manifolds with negative curvature we have also some evidence that the conjecture is true. For example, the conjecture  is trivially true if all the data points are concentrated at a single point in $B(o,\rho)$, and by continuity, it is also true if all the data points are concentrated enough around a single point in $B(o,\rho)$. Weaker convergence results can be established with some efforts. For example, in Section \ref{sec:sub_optimal}, we derive weaker convergence results in Theorems \ref{theorem:compromised_L2} and \ref{theorem:compromise_step_size_L2}. As a comparison, Theorem \ref{theorem:compromised_L2} gives smaller allowable spread and smaller region of convergence than Conjecture \ref{conj:optimal}. Theorem \ref{theorem:compromise_step_size_L2}, on the other hand, gives allowable spread and region of convergence which could be very close to $B(o,r_{\textrm{cx}})$, but the allowable step-size is restricted significantly in this theorem.

\begin{remark}\label{rem:step_size}
It is interesting to note that except for the $L^{2}$ center of mass in a manifold of nonnegative curvature, in all other cases the best step-size depends on $p$ and the radius $\rho$, where the latter in a typical scenario is not often known a-priori or at least its estimation requires further processing. Moreover, for manifolds of negative curvature a lower bound on the curvature is also needed to determine the best step-size. Fortunately, the class of manifolds with nonnegative curvature already covers a big bulk of applications, and since these manifolds are usually compact an a-priori upper bound on $\rho$ (e.g., the diameter of the manifold or $r_{\textrm{cx}}$) can be used to determine a step-size (obviously, not the best one) for values of $p$ other than $2$.
\end{remark}
\begin{remark}[Related to Remark \ref{rem:gradient_convexity}]\label{rem:convergence_behavior}
It is useful to put this conjecture in some context, especially in view of Theorems \ref{theorem:convergence_general} and \ref{theorem:rate_of_convergence} and Remark \ref{rem:gradient_convexity}. It should be clear from our discussions in Subsection \ref{sec:Lp_center} and Remark \ref{rem:gradient_convexity}, that when $\Delta>0$, the conjecture claims convergence for initial conditions in regions in which the Hessian of $f_{p}$ is not necessarily positive-definite. Therefore, what really could help us in proving this result is Theorem \ref{theorem:convergence_general} and not Theorem \ref{theorem:rate_of_convergence}. Although, already assured of convergence, we can use Theorem \ref{theorem:rate_of_convergence} to give us an asymptotic behavior of the algorithm. All our proved convergence theorems (which are Theorems \ref{theorem:sphere_L2}, \ref{theorem:compromised_L2}, and \ref{theorem:compromise_step_size_L2}) are proved using Theorem \ref{theorem:convergence_general}. In Theorems \ref{theorem:sphere_L2} and \ref{theorem:compromised_L2}  both the initial conditions and the trajectories of the algorithm can lie in regions in which only this theorem applies. The case of Theorem \ref{theorem:compromised_L2} is rather interesting. Under the conditions of Theorem \ref{theorem:compromised_L2} the initial condition must lie in a region in which $f_{p}$ happen to be strictly convex (since $\frac{1}{3}r_{\textrm{cx}}< \frac{\pi}{4\sqrt{\Delta}}$, see Subsection \ref{sec:Lp_center}), however, the trajectory of the algorithm can visit a region in which the Hessian of $f_{p}$ is not positive-definite.
\end{remark}

\begin{remark}\label{rem:cont_stay}
From (\ref{eq:grad_Lp}) we have $\|\nabla f_{p}(x)\|<(2\rho)^{p-1}$ for $x\in B(o,\rho)$, and we can easily verify that $H_{B(o,\rho),p}\geq (2\rho)^{p-2}$. Consequently, for each $x^{k}$ we have $t_{k}\|\nabla f_{p}(x^{k})\|<2\rho<\textrm{inj}M$. Therefore, the conditions of Proposition \ref{prop:cont_stay} are satisfied, and we could drop the adverb \textit{continuously} in the statement of the conjecture. It is interesting to note that, this would have not been the case if we had the condition $t\in (0,  \frac{2}{H_{B(o,\rho),p}})$ (which is enough for reduction at each iteration but not enough for continuous stay in $B(o,\rho)$). As another example, in Theorem \ref{theorem:compromised_L2}, since we allow $t\in (0,\frac{2}{H_{B(o,\rho),p}})$ we do not have the equivalence between ``staying'' and ``continuously staying.''
\end{remark}

\begin{remark}[Gradient vs. Newton's]\label{rem:Grad_vs_Newton}
According to this conjecture the domain of convergence of gradient descent for finding the center of mass is relatively large, in the sense that the algorithm can start from any point in the same ball that contains the data points, and moreover, the radius of the ball is not restricted more than what the existence and uniqueness Theorem \ref{theorem:exist_unique} requires. In comparison, if we consider implementing Newton's method for finding the center of mass (as for example in \cite{Groisser}), we do not expect to have such a large domain of convergence. We know that, in general, the gradient descent method has a larger convergence domain compared with Newton's method, but still one might have slight hope that one could prove such a large domain of convergence for Newton's method applied to our problem. However, as mentioned in Subsection \ref{sec:Lp_center}, if $\Delta>0$ and $\frac{\pi}{4\sqrt{\Delta}}<\rho\leq r_{\textrm{cx}}$, then the Hessian of $f_{p}$ might be in-definite at some points in $B(o,\rho)$. This fact, ruins our hope for having Newton's method with a domain of convergence as large as the gradient method, since Newton's method involves inverting (in an appropriate sense) the Hessian operator (see \cite{Groisser}, \cite{AbsMahSep2008}, or \cite{Udriste_Book} for more details). In fact, this suggests that, in a manifold of positive curvature, even implementing Newton's method for data points which are spread in a ball of radius larger than $\frac{\pi}{4\sqrt{\Delta}}$ might be impossible or very difficult. In fact, results in \cite{Groisser} (in which the estimated domain of convergence for Newton's method lie in balls of radius smaller than $\frac{\pi}{4\sqrt{\Delta}}$) also corroborate this observation.
\end{remark}

\subsection{Prior work}\label{sec:prior}
There are not many \textit{accurate} and \textit{correctly proven}  \footnote{A mistake made by some authors (see e.g., \cite{Manton_globally} and \cite{Fletcher_median}) in proving such results has been to wrongly assume that $f_{p}:M\rightarrow \mathbb{R}$ is \textit{globally} convex (or strictly convex) if the data points are in a small enough ball, which in general is not true, e.g., if  $M$ is compact (see Theorem \ref{theorem:Yau}).}results available about the convergence of gradient descent (and more specifically constant-step-size) for locating the Riemannian center of mass. For constant step-size algorithms, the most accurate and useful results are due to Le \cite{Le1} and Groisser \cite{Groisser,Groisser2} for the $L^{2}$ mean. Both authors show that if the data points are concentrated enough then the map $x\mapsto \exp_{x}(-\nabla f_{2}(x))$, when  \emph{restricted} to a small neighborhood of $\bar{x}_{2}$, is a contraction mapping (with fixed point $\bar{x}_{2}$); and using this they prove a convergence result. Groisser's results are more general and allow to analyze both the Newton's method and the gradient descent method, while Le's result gives a slightly better bound on the allowable spread of the data points. Le shows that when $M$ is a locally symmetric  manifold of nonnegative sectional curvature and $\rho\leq \frac{3}{10}r_{\textrm{cx}}$, then with step-size $t_{k}=1$ and starting from the \textit{center} of the ball $B(o,\rho)$, Algorithm \ref{table:T1} locates the global Riemannian mean (see Corollary 2 in \cite{Le1}). In fact, Le shows that the map $x\mapsto \exp_{x}(-\nabla f_{2}(x))$ is a contraction mapping when restricted to $B(\bar{x}_{2},\rho)$. Since $\bar{x}_{2}$ is a fixed point of the map, \textit{starting} from $o$ the iterates will not leave $B(\bar{x}_{2},\rho)$ (but not $B(o,\rho)$, necessarily). Le's result leaves room for significant improvement in the allowable spread of data points compared to our Conjecture \ref{conj:optimal}. Notice that Le's result can be used to deduce convergence for an \textit{arbitrary} initial condition in $B(o,\rho)$ assuming a $\rho$ half as before, that is $\rho\leq \frac{3}{20}r_{\textrm{cx}}$. This is obviously a more practical scenario. Our Theorem \ref{theorem:compromised_L2} (which needs only $\rho\leq \frac{1}{3}r_{\textrm{cx}}$ and does not require local symmetry or nonnegative curvature) is a considerable improvement over Le's result. Still our Theorem \ref{theorem:sphere_L2}, which is the best one can hope for in the case of manifolds of constant nonnegative curvature, is an even further improvement over Le's result (when applied to these manifolds).

In \cite{Le_barycenters_2004} a convergence result is given for a (hard-to-implement) gradient method which varies the step-size in order to confine the iterates to a small ball. A result in \cite{PhDKrakowski} is somewhat similar in nature to our Theorem \ref{theorem:compromised_L2}, yet it does not yield an explicit convergence condition. Local convergence of Algorithm \ref{table:T1} with $t_{k}=1$ on $\mathbb{S}^{n}$ under the generic condition of ``$x^{0}$ being close enough to the center'' is argued in \cite{Buss}; however, such a condition is of little practical use. A few authors have also studied other related problems and methods e.g.,  stochastic gradient methods \cite{Arnaudon1}, projected gradient methods \cite{Karkowki_Manton}, Newton's method \cite{Buss} and \cite{Groisser}, and variable step-size gradient algorithm for the $L^{1}$ mean or median \cite{Yang1}.

\section{Convergence on Manifolds of Constant Nonnegative Curvature (An Optimal Result)}\label{sec:optmial}
In this section, we prove Theorem \ref{theorem:sphere_L2} which is essentially Conjecture \ref{conj:optimal} for a manifold of constant nonnegative curvature.\footnote{To be accurate, Theorem \ref{theorem:sphere_L2} is little bit more than Conjecture \ref{conj:optimal} as it also states a result about the iterates getting trapped in the convex hull of the data points. } 
\subsection{A useful triangle secant comparison result}\label{sec:comparison}
Here, we prove the comparison result used to prove Theorem \ref{theorem:sphere_L2} (see also Figure \ref{fig:secant} and Theorem \ref{theorem:convex_combination_const_pos}). Referring to Figure \ref{fig:secant}, the theorem is about comparing the lengths of corresponding geodesic secants $xm$ and $\tilde{x}\tilde{m}$ of two (geodesic) triangles $\triangle y_{1}xy_{2}$ and $\triangle \tilde{y}_{1}\tilde{x}\tilde{y}_{2}$ which are in $M=\mathbb{S}^{2}$ and $\mathbb{R}^{2}$, respectively, and in which the angles at $x$ and $\tilde{x}$ and their corresponding sides are equal.  The bi-sectors of angles $\angle y_{1}xy_{2}$ and $\angle \tilde{y}_{1}\tilde{x}\tilde{y}_{2}$ are examples of such secants and the theorem implies that the bi-sector of $\angle y_{1}xy_{2}$ is larger than that of $\angle \tilde{y}_{1}\tilde{x}\tilde{y}_{2}$. In general, it should be obvious that such a comparison is meaningful only if $M$ is a manifold of constant (sectional) curvature or if $M$ is two-dimensional. In the case of a constant curvature manifold $M$ the enabling property is the well-known \textit{axiom of plane}: Let $x\in M$ and assume that $W\subset T_{x}M$ is a $k$-dimensional subspace of $T_{x}M$, then the set $\exp_{x}(W\cap B(0_{x},\rho))$ is a totally geodesic submanifold of $W$. Here $B(0_{x},\rho)$ is the open ball of radius $\rho$ around the origin of $T_{x}M$ and $0<\rho<\textrm{inj}M$ (see e.g., \cite[p. 136]{Sakai}).

It seems that this kind of comparison cannot be deduced, at least immediately, from standard Toponogov's comparison theorems (see e.g., \cite{Chavel}, \cite{Sakai}, \cite{Cheeger_Comparison}, and \cite{Karcher2}). Although we are almost certain that this result has been known before, we were not able to find either a proof for it or even its statement. Our proof here is a direct one and is not in the spirit of standard comparison theorems based on comparison of solutions of two differential equations.

\begin{figure}[h]
  \begin{center}
  \scalebox{.75}{\input{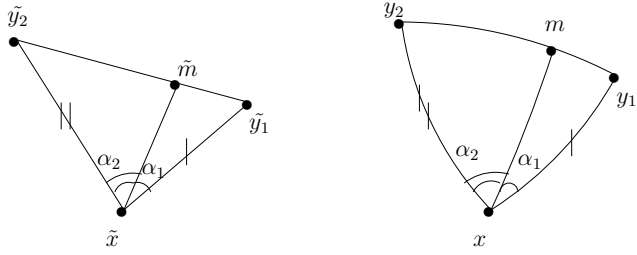}}\\
  \caption{$\triangle y_{1}xy_{2}$ is a triangle in a manifold of constant positive curvature and $\triangle\tilde{y}_{1}\tilde{x}\tilde{y}_{2}$ is the corresponding triangle in Euclidean space. Corresponding equal angles and sides are marked. According to Theorem \ref{theorem:comparison_sphere}, the geodesic secant $xm$ is longer than the secant $\tilde{x}\tilde{m}$.
  }\label{fig:secant}
\end{center}
\end{figure}
\begin{theorem}\label{theorem:comparison_sphere}
Consider a triangle with vertices $x,y_{1},y_{2}\in M=\mathbb{S}^{2}$ (or $\mathbb{S}^{n}$) and with minimal geodesic sides $xy_{1}$, $xy_{2}$, $y_{1}y_{2}$ of lengths $b$, $c$, and a respectively, where $a+b+c<2\pi$. Assume that the internal angle $\angle y_{1}xy_{2}$ is equal to $\alpha$ ($0<\alpha\leq\pi$). Consider another triangle in $\mathbb{R}^{2}$ (or $\mathbb{R}^{n}$) with vertices $\tilde{x},\tilde{y}_{1},\tilde{y}_{2}$ and corresponding side lengths $\tilde{a}$ and $\tilde{b}$ such that $\tilde{b}=b$, $\tilde{c}=c$ and $\angle\tilde{y_{1}}\tilde{x}\tilde{y}_{2}=\alpha$. Consider a geodesic secant $\gamma$ in triangle $y_{1}xy_{2}$ passing through $x$ making angles $\alpha_{1}$ and $\alpha_{2}$ ($\alpha_{1}+\alpha_2=\alpha$) with minimal geodesic sides $xy_{1}$ and $xy_{2}$, respectively. Denote by $m$ the point where the geodesic secant meets the minimal geodesic side $y_{1}y_{2}$ for the first time.\footnote{Both $\alpha =\alpha_{1}+\alpha_{2}$ and the secant meeting the minimal geodesic side $y_{1}y_{2}$ follow from the axiom of plane.}Similarly, let a secant line of triangle $\tilde{y}_{1}\tilde{x}\tilde{y}_{2}$ passing through $\tilde{x}$ make angles $\alpha_{1}$ and $\alpha_{2}$ with sides $\tilde{x}\tilde{y}_{1}$ and $\tilde{x}\tilde{y}_{2}$, respectively, and denote by $\tilde{m}$ the point where this secant line meets the side $\tilde{y}_{1}\tilde{y}_{2}$. Then we have that the length of the secant segment $xm$ is larger than or equal to the length of the secant segment $\tilde{x}\tilde{m}$ with equality if and only if $\alpha_1=0$, $\alpha_2=0$, $b=0$, $c=0$, or $\alpha =\pi$. More generally, if $M$ is a manifold of constant curvature $\Delta\geq 0$, the same result holds if $x,y_{1},y_{2}$ belong to a ball of radius not larger than $r_{\textrm{cx}}$. 
\end{theorem}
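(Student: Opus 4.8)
The plan is to reduce the statement to an explicit trigonometric inequality on the unit sphere and then prove it using the concavity and monotonicity of $g(t)=t\cot t$ on $(0,\pi)$. First I would dispose of the reductions. The case $M=\mathbb{S}^n$ reduces to $M=\mathbb{S}^2$: the vectors $\exp_x^{-1}y_1,\exp_x^{-1}y_2\in T_xM$ span an at most $2$-dimensional subspace $W$, and by the axiom of plane $\exp_x(W\cap B(0_x,\pi))$ is a totally geodesic great $2$-sphere containing $y_1$, $y_2$ and the secant $\gamma$, so every length below is computed inside it. The case of constant curvature $\Delta>0$ reduces to $\Delta=1$ by scaling the metric by $\sqrt\Delta$: a strongly convex ball of radius $\le r_{\textrm{cx}}$ is isometric to such a ball in a rescaled sphere, both the spherical lengths and the Euclidean comparison triangle scale homogeneously, and $\ell\ge\tilde\ell$ is scale invariant (the case $\Delta=0$ is trivial). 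Finally, under $a+b+c<2\pi$ one checks that $a,b,c<\pi$, that the relevant minimal geodesics are unique, that $\gamma$ meets $[y_1,y_2]$ in a well-defined first point $m$, and that $\ell:=d(x,m)\in(0,\pi)$: if $\ell=\pi$, then $m$ is antipodal to $x$, which forces $b+a_1\ge\pi$ and $c+a_2\ge\pi$ with $a_1+a_2=a$, contradicting $a+b+c<2\pi$ (in the ball case it is immediate, any two points of a strongly convex ball being at distance $<\pi$).

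Next I would write closed forms. Putting $x$ at the north pole of $\mathbb{S}^2$ with $\gamma$ along the prime meridian, $y_1$ at colatitude $b$ and longitude $\alpha_1$, $y_2$ at colatitude $c$ and longitude $-\alpha_2$, and intersecting the great circle through $y_1,y_2$ (with normal $y_1\times y_2$) with that meridian, one gets
\[
\cot\ell=\frac{\sin\alpha_2\cot b+\sin\alpha_1\cot c}{\sin\alpha},
\]
while equating areas in the decomposition of the comparison triangle by the cevian $\tilde x\tilde m$ gives $\tilde\ell=\frac{bc\sin\alpha}{b\sin\alpha_1+c\sin\alpha_2}$. With $P=1/\sin\alpha_1>0$, $Q=1/\sin\alpha_2>0$ and $D=\cot\alpha_1+\cot\alpha_2=\sin\alpha/(\sin\alpha_1\sin\alpha_2)>0$, these read
\[
\cot\ell=\frac{P\cot b+Q\cot c}{D},\qquad \tilde\ell=\frac{D}{P/b+Q/c},\qquad D\le P+Q,
\]
the last because $\cot\alpha_i\le 1/\sin\alpha_i$.

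Then comes the inequality itself. Since $\cot$ is strictly decreasing on $(0,\pi)$ and $\ell,\tilde\ell\in(0,\pi)$, it suffices to prove $\cot\ell\le\cot\tilde\ell$; using $\cot t=g(t)/t$ with $g(t)=t\cot t$, a short manipulation rewrites this as
\[
g(\tilde\ell)\ \ge\ \lambda\,g(b)+(1-\lambda)\,g(c),\qquad \lambda:=\frac{P/b}{P/b+Q/c}\in(0,1).
\]
The function $g$ is strictly decreasing and strictly concave on $(0,\pi)$: $g'(t)=(\sin t\cos t-t)/\sin^2t<0$, and $g''(t)=-2\csc^2t\,(1-t\cot t)<0$ since $\sin t-t\cos t>0$ there (its derivative $t\sin t$ is positive and it vanishes at $0$). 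Hence by Jensen $\lambda g(b)+(1-\lambda)g(c)\le g(\lambda b+(1-\lambda)c)$; and since $\lambda b+(1-\lambda)c=(P+Q)/(P/b+Q/c)$ while $\tilde\ell=D/(P/b+Q/c)$ with $D\le P+Q$, we get $\tilde\ell\le\lambda b+(1-\lambda)c$, so $g(\tilde\ell)\ge g(\lambda b+(1-\lambda)c)\ge\lambda g(b)+(1-\lambda)g(c)$, i.e. $\ell\ge\tilde\ell$. When $\alpha_1,\alpha_2,b,c>0$ and $\alpha<\pi$ one has $D<P+Q$ strictly, hence $\tilde\ell<\lambda b+(1-\lambda)c$ and the chain is strict, so $\ell>\tilde\ell$; in each of the degenerate cases $\alpha_1=0$, $\alpha_2=0$, $b=0$, $c=0$, $\alpha=\pi$ one checks directly that $\ell=\tilde\ell$ (both equal to $b$, $c$, $0$, $0$, $0$ respectively), matching the asserted equality condition.

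I expect the real obstacle to be discovering this reformulation, not any single estimate: the bare comparison of $\cot\ell$ with $\cot\tilde\ell$ looks intractable until one (i) compresses all the angular data into the single weight $\lambda$ together with the scalar inequality $D\le P+Q$, and (ii) recognizes $g(t)=t\cot t$ as the correct auxiliary function, after which its concavity and monotonicity reduce everything to one Jensen step plus the elementary bound $\tilde\ell\le\lambda b+(1-\lambda)c$. The only other delicate point is the geometric bookkeeping in the reductions --- verifying that $m$ really is the first intersection of $\gamma$ with the minimal side $y_1y_2$ and that $\ell\in(0,\pi)$ --- which is precisely where $a+b+c<2\pi$ (respectively, strong convexity of the ball of radius $\le r_{\textrm{cx}}$) is used.
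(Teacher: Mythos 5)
Your proposal is correct and takes essentially the same route as the paper: you derive the same spherical and Euclidean cevian formulas and then compare cotangents, and your single Jensen step for $g(t)=t\cot t$ at $b,c$ with weight $\lambda$, followed by $\tilde\ell\le\lambda b+(1-\lambda)c$ and monotonicity of $g$, is exactly the paper's chain (concavity of $\cot(1/t)$ applied at $1/b,1/c$, then the bound $\cot t\le s\cot(ts)$) merely repackaged. The treatment of the degenerate cases and the strictness discussion likewise match the paper's.
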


\begin{proof}
Denote the length of the geodesic secant segment $xm$ by $z(b,c;\alpha_{1},\alpha_{2})$. Using spherical trigonometric identities (e.g., \cite[p. 53]{treatise_spherical}) one can show that (see also \cite[p. 55]{treatise_spherical})
\begin{equation}\label{eq:cot_z_sphere}
\cot{z(b,c;\alpha_1,\alpha_2)}=\frac{\cot{b}\sin{\alpha_{2}}+\cot{c}\sin{\alpha_{1}}}{\sin(\alpha_{1}+\alpha_{2})}.
\end{equation}
Similarly, denote the length of the secant segment $\tilde{x}\tilde{m}$ by $\tilde{z}(b,c,\alpha_{1},\alpha_{2})$. It is easy to see that
\begin{equation}\label{eq:cot_z_plane}
\tilde{z}(b,c;\alpha_{1},\alpha_{2})=\frac{bc~\sin(\alpha_{1}+\alpha_{2})}{b\sin{\alpha_{1}}+c\sin{\alpha_{2}}},
\end{equation}
where in both relations $\alpha_{1}+\alpha_{2}=\alpha$. The claim is obvious for $\alpha_{1}=0$, $\alpha_{2}=0$, $b=0$, or $c=0$. Also from $b+c+a<2\pi$ one can show that $\alpha=\pi$ only if $a=b+c$ in which case again we have equality. Note that $z$ and $\tilde{z}$ are both smaller than $\pi$ and therefore to show $z(b,c;\alpha_{1},\alpha_{2})>\tilde{z}(b,c;\alpha_{1},\alpha_{2})$ we could show $\cot{z}(b,c;\alpha_{1},\alpha_{2})<\cot{\tilde{z}(b,c;\alpha_{1},\alpha_{2})}$ with $\alpha_{1}+\alpha_2=\alpha$ (see (\ref{eq:cot_z_sphere}) and (\ref{eq:cot_z_plane})). First, we note that $t\mapsto g(t)=\cot\frac{1}{t}$ is strictly concave in the interval $(\frac{1}{\pi},\infty)$. To see this notice that $g$ is smooth and its second derivative is
\begin{equation}
g''(t)=-\frac{2}{t^{3}}(1+\cot^{2}\frac{1}{t})(1-\frac{\cot{\frac{1}{t}}}{t}),
\end{equation}
which is strictly negative. Note that in the above, the last term is positive in $(\frac{1}{\pi},\infty)$ because $t\cot{t}<1$ in $(0,\pi)$. Now set $t_{1}=\frac{1}{b}$, $t_{2}=\frac{1}{c}$, $\lambda_{1}=\frac{\sin(\alpha-\alpha_{1})}{\sin(\alpha-\alpha_{1})+\sin{\alpha_{1}}}$ and $\lambda_{2}=\frac{\sin\alpha_{1}}{\sin(\alpha-\alpha_{1})+\sin{\alpha_{1}}}=1-\lambda_{1}$. Notice that $t_{1},t_{2},\lambda_{1}t_{1}+\lambda_{2}t_{2}>\frac{1}{\pi}$ (because $0\leq b,c<\pi$). From $\lambda_{1}g(t_{1})+\lambda_{2}g(t_{2})\leq g(\lambda_{1}t_1+\lambda_{2}t_2)$ in which equality is only if $t_{1}=t_{2}$ (or equivalently if $b=c$) we have
\begin{equation}\label{eq:cot_big}
\frac{\sin(\alpha-\alpha_{1})\cot{b}+\sin\alpha_{1}\cot{c}}{\sin(\alpha-\alpha_{1})+\sin{\alpha_{1}}}\leq \cot\bigg(\frac{bc~(\sin(\alpha-\alpha_{1})+\sin{\alpha_{1}})}{b\sin{\alpha_{1}}+c\sin(\alpha-\alpha_{1})}\bigg).
\end{equation}
It is easy to verify that $\cot{t}\leq s \cot{(ts)}$ for $0< t<\pi$ and $0< s\leq 1$ with equality only if $s=1$. Without loss of generality we can assume $0\leq \alpha_{1}\leq\frac{\alpha}{2}$, and hence for $s= \frac{\sin{\alpha}}{\sin(\alpha-\alpha_{1})+\sin{\alpha_{1}}}$, we have $0< s\leq 1$ with $s=1$ only if $\alpha_{1}=0$. Consequently, by applying the mentioned fact to the right hand side of (\ref{eq:cot_big}) with the given $s$ we have (assuming $\alpha_{1}>0$)
\begin{equation}
\frac{\sin(\alpha-\alpha_{1})\cot{b}+\sin\alpha_{1}\cot{c}}{\sin(\alpha-\alpha_{1})+\sin{\alpha_{1}}}<
\cot\big(\frac{bc~\sin{\alpha}}{b\sin{\alpha_{1}}+c\sin(\alpha-\alpha_{1})}\big)~\frac{\sin{\alpha}}{\sin(\alpha-\alpha_{1})+\sin{\alpha_{1}}},
\end{equation}
which means that $z(b,c,\alpha_{1},\alpha-\alpha_{1})>\tilde{z}(b,c,\alpha_{1},\alpha-\alpha_{1})$.
\end{proof}

\subsection{Riemannian convex combinations}\label{sec:convex_comb}
Intuitively, one would like to think of $\exp_{x}(\sum_{i=1}^{N}w_{i}\exp_{x}^{-1}x_{i})$ as a Riemannian convex combination with similar properties as the Euclidean convex combination. We call this a convex combination of $\{x_{i}\}_{i=1}^{N}\subset M$ with respect to $x\in M$ and with weights $(w_{1},\ldots,w_{N})$. \footnote{Notice that if $M=\mathbb{R}^{n}$, $\exp_{x}(\sum_{i=1}^{N}w_{i}\exp_{x}^{-1}x_{i})$ translates to $x+\sum_{i=1}^{N}w_{i}(x_{i}-x)$, which is independent of $x$. In a nonlinear space the convex combination does not enjoy this base-point independence and that is why we have been explicit in calling $\exp_{x}(\sum_{i=1}^{N}w_{i}\exp_{x}^{-1}x_{i})$ a convex combination with respect to $x$. Moreover, to have ``nice'' properties, $x$ cannot be arbitrary and must belong the convex hull of $\{x_{i}\}_{i=1}^{N}$, as explained in Remark \ref{rem:conx_comb_wrt}.}  For the case of a manifold of constant nonnegative curvature we show that this is a valid definition. The following general proposition is very useful in making sense of Riemannian convex combination as well as proving Theorem \ref{theorem:sphere_L2}.
\begin{proposition}\label{prop:convex_combination}
Let $S\subset M$ be a strongly convex set containing $\{x_{i}\}_{i=1}^{N}$ and let $x\in S$. Assume that for arbitrary weights $0\leq w_{1}, w_{2}\leq 1$ (with $w_{1}+w_{2}=1$) and for any $y_{1},y_{2}\in S$, $\exp_{x}\left(t( w_{1}\exp_{x}^{-1}y_{1}+w_{2}\exp_{x}^{-1}y_{2})\right)\in S$ for $t\in[0,1]$. Then for every set of points $\{x_{i}\}_{i=1}^{N}\subset S$ and corresponding weights $0\leq w_{i}\leq 1$ (with $\sum_{i}w_{i}=1$), $\exp_{x}(t\sum_{i=1}^{N}w_{i}\exp_{x}^{-1}x_{i})$ also belongs to $S$ for $t\in[0,1]$.
\end{proposition}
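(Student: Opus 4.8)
The plan is to induct on $N$, the number of data points, since the hypothesis of the proposition is precisely the case $N=2$ of the desired conclusion. I would formulate the induction hypothesis as the full statement for $N$ points: for every $\{x_{i}\}_{i=1}^{N}\subset S$ and weights $0\le w_{i}\le 1$ with $\sum_{i}w_{i}=1$, the curve $t\mapsto\exp_{x}(t\sum_{i=1}^{N}w_{i}\exp_{x}^{-1}x_{i})$ lies in $S$ for all $t\in[0,1]$. The case $N=1$ is just the assertion that the minimizing geodesic from $x$ to $x_{1}$ lies in $S$, which holds because $S$ is strongly convex and contains both $x$ and $x_{1}$; the case $N=2$ is the standing assumption of the proposition.

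For the step from $N$ to $N+1$, I would peel off the last weight. Put $W=\sum_{i=1}^{N}w_{i}=1-w_{N+1}$. If $W=0$ the conclusion reduces to the $N=1$ case applied to $x_{N+1}$, so suppose $W>0$ and apply the induction hypothesis to $x_{1},\dots,x_{N}$ with the renormalized weights $w_{i}/W$: this gives that $v:=\sum_{i=1}^{N}\tfrac{w_{i}}{W}\exp_{x}^{-1}x_{i}$ satisfies $\exp_{x}(tv)\in S$ for $t\in[0,1]$, and in particular that $y:=\exp_{x}(v)\in S$. The algebraic heart of the argument is then the identity
\[ \sum_{i=1}^{N+1}w_{i}\exp_{x}^{-1}x_{i} \;=\; W\exp_{x}^{-1}y+w_{N+1}\exp_{x}^{-1}x_{N+1}, \]
which exhibits the tangent vector of the $(N+1)$-point combination as a genuine two-point combination of $\exp_{x}^{-1}y$ and $\exp_{x}^{-1}x_{N+1}$ with weights $W$ and $w_{N+1}$ (which sum to $1$). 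Applying the hypothesis of the proposition to $y,x_{N+1}\in S$ with these weights shows that $t\mapsto\exp_{x}\!\big(t(W\exp_{x}^{-1}y+w_{N+1}\exp_{x}^{-1}x_{N+1})\big)$ stays in $S$ for $t\in[0,1]$; by the identity this curve is exactly $t\mapsto\exp_{x}(t\sum_{i=1}^{N+1}w_{i}\exp_{x}^{-1}x_{i})$, which closes the induction.

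The step that requires real care — and which I expect to be the only genuine obstacle — is justifying the identity above, i.e.\ that $v=\exp_{x}^{-1}y$ in the sense of the (minimizing) inverse exponential map used by the hypothesis. For this I would argue as follows: by the induction hypothesis the geodesic $t\mapsto\exp_{x}(tv)$, $t\in[0,1]$, joins $x$ to $y$ while staying in $S$, and since $S$ is strongly convex there is a unique minimizing geodesic from $x$ to $y$ lying in $S$; it therefore suffices to know that $t\mapsto\exp_{x}(tv)$ is minimizing. In the setting in which the proposition will actually be used, namely $S$ a ball of radius at most $r_{\textrm{cx}}$ (cf.\ (\ref{eq:rcx}) and Theorem \ref{theorem:exist_unique}), this is immediate because $\|v\|\le\max_{i}d(x,x_{i})<2r_{\textrm{cx}}\le\textrm{inj}M\le\textrm{inj}\,x$, so $t\mapsto\exp_{x}(tv)$ is automatically the minimizing geodesic from $x$ to $y$ and hence $v=\exp_{x}^{-1}y$. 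With that identification in hand, the remainder is the purely formal induction described above.
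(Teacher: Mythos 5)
Your proof is correct and follows essentially the same route as the paper's: the paper likewise argues by induction (writing out the case $N=3$), regrouping the combination as a two-point combination through the intermediate point $\tilde{x}_{2}=\exp_{x}\big(\tfrac{w_{2}}{w_{2}+w_{3}}\exp_{x}^{-1}x_{2}+\tfrac{w_{3}}{w_{2}+w_{3}}\exp_{x}^{-1}x_{3}\big)$ and identifying $\exp_{x}^{-1}\tilde{x}_{2}$ with that convex combination of tangent vectors, exactly as in your peeling-off step. Your explicit justification of this identification (minimality of $t\mapsto\exp_{x}(tv)$ via $\|v\|\leq\max_{i}d(x,x_{i})<\textrm{inj}M$ in the ball setting where the proposition is applied) is the one point the paper passes over quickly, appealing only to strong convexity of $S$.
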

\begin{proof}
We prove the claim for $N=3$ and for larger $N$ it follows by induction. 
Let $y(t)=\exp_{x}(t\sum_{i=1}^{3}w_{i}\exp_{x}^{-1}x_{i})$. Note that we can write
\begin{equation}
y(t)=\exp_{x}\left(t\bigg(w_{1}\exp_{x}^{-1}x_{1}+(1-w_{1})\big(\frac{w_{2}}{w_{2}+w_{3}}\exp_{x}^{-1}x_{2}+
\frac{w_{3}}{w_{2}+w_{3}}\exp_{x}^{-1}x_{3}\big)\bigg)\right).
\end{equation}
Since by assumption $\tilde{x}_{2}=\exp_{x}(\frac{w_{2}}{w_{2}+w_{3}}\exp_{x}^{-1}x_{2}+
\frac{w_{3}}{w_{2}+w_{3}}\exp_{x}^{-1}x_{3})$ belongs to $S$, there exists a unique minimizing geodesic between $x$ and $\tilde{x}_{2}$. Therefore,
$\exp_{x}^{-1}\tilde{x}_{2}$ is well-defined and belongs to the injectivity domain of $\exp_{x}$ and we have $\exp_{x}^{-1}\tilde{x}_{2}=\frac{w_{2}}{w_{2}+w_{3}}\exp_{x}^{-1}x_{2}+
\frac{w_{3}}{w_{2}+w_{3}}\exp_{x}^{-1}x_{3}$. On the hand, by our assumption $\exp_{x}\big(t(w_{1}\exp_{x}^{-1}x_{1}+(1-w_{1})\exp_{x}^{-1}\tilde{x}_{2})\big)$ belongs to $S$, which means that $y(t)\in S$ for $t\in[0,1]$.
\end{proof}

An example of a set $S$ is the convex hull of $\{x_{i}\}_{i=1}^{N}$. Recall that the convex hull of $A\subset M$ (if it exists) is defined as the smallest strongly set containing $A$. If $A$ lies in a strongly convex set obviously its convex hull exists. It is known that the convex hull of a finite set of points in a constant curvature manifold is a closed set. Also in a manifold of constant curvature the $L^{p}$ center of mass ($1<p<\infty$) of $\{x_{i}\}_{i=1}^{N}$ with weights $w_{i}\geq 0$ belongs to the convex hull of $\{x_{i}\}_{i=1}^{N}$ and if $w_{i}>0$ for every $i$, it belongs to the interior of the hull \cite{BijanLp}.

The next theorem makes exact sense of an intuitive notion of Riemannian convex combination in a manifold of constant nonnegative curvature:
\begin{theorem}\label{theorem:convex_combination_const_pos}
Let $M$ be a Riemannian manifold of constant non-negative curvature and $S$ a strongly convex set containing $\{x_{i}\}_{i=1}^{N}$. Assume that $S$ lies in a ball of radius of at most $r_{\textup{\textrm{cx}}}$. For every $x\in S$ and arbitrary weights $w_{i}\geq 0$ (with $\sum_{i=1}^{N}w_{i}=1$), we have $\exp_{x}(t\sum_{i=1}^{N}w_{i}\exp_{x}^{-1}x_{i})\in S$ for $t\in [0,1]$. In particular, if $S$ is the convex hull of $\{x_{i}\}_{i=1}^{N}$, then the Riemannian convex combination $\exp_{x}(\sum_{i=1}^{N}w_{i}\exp_{x}^{-1}x_{i})$ belongs to $S$ for every $x\in S$.
\end{theorem}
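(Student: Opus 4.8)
The plan is to reduce to the two-point case via Proposition~\ref{prop:convex_combination} and then to compare the Riemannian convex combination with its Euclidean model through the secant comparison Theorem~\ref{theorem:comparison_sphere}. Fix $x\in S$. By Proposition~\ref{prop:convex_combination} it suffices to show that for arbitrary $y_{1},y_{2}\in S$ and weights $w_{1},w_{2}\geq 0$ with $w_{1}+w_{2}=1$ the point $\exp_{x}(tv)$ lies in $S$ for all $t\in[0,1]$, where $v=w_{1}\exp_{x}^{-1}y_{1}+w_{2}\exp_{x}^{-1}y_{2}$; here $\exp_{x}^{-1}y_{i}$ is well defined because $S$ lies in a ball of radius at most $r_{\textrm{cx}}$, so $d(x,y_{i})<2r_{\textrm{cx}}\leq\textrm{inj}M$. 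Put $b=d(x,y_{1})$, $c=d(x,y_{2})$, $\alpha=\angle y_{1}xy_{2}$. If $w_{1}=0$, $w_{2}=0$, $b=0$, $c=0$, or $\alpha\in\{0,\pi\}$ (the degenerate cases, where $\exp_{x}^{-1}y_{1},\exp_{x}^{-1}y_{2}$ are linearly dependent or one weight vanishes), then $v$ is a nonnegative multiple of $\exp_{x}^{-1}y_{1}$ or of $\exp_{x}^{-1}y_{2}$ of norm at most $\max\{b,c\}$, so $\exp_{x}(tv)$ lies on the minimal geodesic from $x$ to $y_{1}$ or to $y_{2}$ (or equals $x$), which is contained in $S$ by strong convexity. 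So assume henceforth $w_{1},w_{2}>0$, $b,c>0$, and $0<\alpha<\pi$.

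In this main case, let $\alpha_{1},\alpha_{2}$ be the angles that $v$ makes with $\exp_{x}^{-1}y_{1}$ and $\exp_{x}^{-1}y_{2}$; since $v$ is a strictly positive combination of these two vectors, $\alpha_{1}+\alpha_{2}=\alpha$ and $0<\alpha_{i}<\alpha$. By the axiom of plane, $\Sigma:=\exp_{x}\!\big(W\cap B(0_{x},\textrm{inj}M)\big)$ with $W=\mathrm{span}\{\exp_{x}^{-1}y_{1},\exp_{x}^{-1}y_{2}\}$ is a totally geodesic surface of constant curvature $\Delta$ containing $y_{1},y_{2}$, the minimal geodesic sides $xy_{1},xy_{2},y_{1}y_{2}$ of $\triangle y_{1}xy_{2}$, and the geodesic ray $s\mapsto\exp_{x}(sv/\|v\|)$. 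This ray is a geodesic secant through $x$ making angles $\alpha_{1},\alpha_{2}$ with $xy_{1},xy_{2}$, and (again by the axiom of plane) it first meets the side $y_{1}y_{2}$ at a point $m$; working inside $\Sigma$ and inside the strongly convex ball carrying $S$, the secant segment $xm$ lies in the convex hull of $\{x,y_{1},y_{2}\}$ and is the minimal geodesic from $x$ to $m$, so in the notation of the proof of Theorem~\ref{theorem:comparison_sphere} its length is $z(b,c;\alpha_{1},\alpha_{2})=d(x,m)$.

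Next I identify $\|v\|$ with the corresponding Euclidean secant length. Take a comparison triangle $\triangle\tilde{y}_{1}\tilde{x}\tilde{y}_{2}$ in $\mathbb{R}^{2}$ with $\tilde{b}=b$, $\tilde{c}=c$, $\angle\tilde{y}_{1}\tilde{x}\tilde{y}_{2}=\alpha$, and use the linear isometry $W\to\mathbb{R}^{2}$ sending $\exp_{x}^{-1}y_{i}$ to $\tilde{y}_{i}-\tilde{x}$; it carries $v$ to $\tilde{v}=w_{1}(\tilde{y}_{1}-\tilde{x})+w_{2}(\tilde{y}_{2}-\tilde{x})$, so $\|v\|=\|\tilde{v}\|$ and the image secant makes the same angles $\alpha_{1},\alpha_{2}$. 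Since $\tilde{x}+\tilde{v}=w_{1}\tilde{y}_{1}+w_{2}\tilde{y}_{2}$ lies both on the segment $\tilde{y}_{1}\tilde{y}_{2}$ and on the ray from $\tilde{x}$ in the direction $\tilde{v}$, it is precisely the point $\tilde{m}$ of Theorem~\ref{theorem:comparison_sphere}, whence $\|v\|=\|\tilde{v}\|=d(\tilde{x},\tilde{m})=\tilde{z}(b,c;\alpha_{1},\alpha_{2})$ (consistent with the closed form (\ref{eq:cot_z_plane})). Theorem~\ref{theorem:comparison_sphere}, applicable since $x,y_{1},y_{2}$ lie in a ball of radius at most $r_{\textrm{cx}}$, now gives $\|v\|=\tilde{z}(b,c;\alpha_{1},\alpha_{2})\leq z(b,c;\alpha_{1},\alpha_{2})=d(x,m)$. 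Hence for every $t\in[0,1]$ the point $\exp_{x}(tv)$ sits on the minimal geodesic segment from $x$ to $m$ at arclength $t\|v\|\leq d(x,m)$; that segment lies in the convex hull of $\{x,y_{1},y_{2}\}$, which is contained in $S$ because $S$ is strongly convex and contains $x,y_{1},y_{2}$. Therefore $\exp_{x}(tv)\in S$, and Proposition~\ref{prop:convex_combination} yields the first assertion. The ``in particular'' clause follows by taking $S$ to be the convex hull of $\{x_{i}\}_{i=1}^{N}$ (strongly convex, containing the $x_{i}$, and lying in the prescribed ball) and $t=1$.

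The step I expect to be the main obstacle is the geometric bookkeeping in the middle: making the identification $\|v\|=\tilde{z}(b,c;\alpha_{1},\alpha_{2})$ fully rigorous and, on the Riemannian side, invoking the axiom of plane to confirm that the secant from $x$ to the opposite side is a genuine minimal geodesic lying inside the convex hull of $\{x,y_{1},y_{2}\}$, so that the single inequality supplied by Theorem~\ref{theorem:comparison_sphere} truly certifies $\exp_{x}(tv)\in S$. Everything else is routine once the two triangles and their secants have been correctly matched.
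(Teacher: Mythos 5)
Your proposal is correct and follows essentially the same route as the paper's own proof: reduction to the two-point case via Proposition~\ref{prop:convex_combination}, identification of $\|w_{1}\exp_{x}^{-1}y_{1}+w_{2}\exp_{x}^{-1}y_{2}\|$ with the Euclidean secant length, and the secant comparison of Theorem~\ref{theorem:comparison_sphere} combined with strong convexity. In fact you spell out several steps (the weight--angle correspondence, the minimality of the secant segment $xm$, the degenerate cases) that the paper's proof leaves implicit, so no gap remains.
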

\begin{proof}
By Proposition \ref{prop:convex_combination}, it suffices to show that for arbitrary $y_{1},y_{2}\in S$ and weights $(w_{1},w_{2})$ with $w_{1}+w_{2}=1$ we have $\tilde{\tilde{m}}(t)=\exp_{x}\big(t(w_{1}\exp_{x}^{-1}y_{1}+w_{2}\exp_{x}^{-1}y_{2})\big)\in S$ for $\in [0,1]$. This follows from comparison Theorem \ref{theorem:comparison_sphere}. To see this, first note that, in triangle $xy_{1}y_{2}$ in Figure \ref{fig:secant}, there is a $1-1$ correspondence between the weight pairs $(w_{1},w_{2})$ and the angle pairs $(\alpha_{1},\alpha_{2})$, where $\alpha_{1}+\alpha_{2}=\alpha$ (the axiom of plane is essential for this fact to hold). If $x,y_{1},y_{2}\in S$, then by strong convexity of $S$ we have $m\in S$. Since the distance between $x$ and $\tilde{\tilde{m}}(1)$ is nothing but the length of secant $\tilde{x}\tilde{m}$ in triangle $\tilde{x}\tilde{y}_{1}\tilde{y}_{2}$, it follows from Theorem \ref{theorem:comparison_sphere} that $\tilde{\tilde{m}}(t)$ must belong to $S$ for $t\in [0,1]$.
\end{proof}
\begin{remark}\label{rem:conx_comb_wrt}
There is a subtle difference between the convex combination in Euclidean space and in a manifold of constant nonnegative curvature. In Euclidean space $\exp_{x}(\sum_{i=1}^{N}w_{i}\exp_{x}^{-1}x_{i})$ belongs to the convex hull of $\{x_{i}\}_{i=1}^{N}$ for an arbitrary $x$ (i.e., inside or outside the convex hull). However, it follows from Theorem \ref{theorem:comparison_sphere} that, in the case of a manifold of constant nonnegative curvature, if $x$ is not in the convex hull of $\{x_{i}\}_{i=1}^{N}$, then $\exp_{x}(\sum_{i=1}^{N}w_{i}\exp_{x}^{-1}x_{i})$  also does not belong to the convex hull, necessarily.
\end{remark}
\begin{remark}\label{rem:conv_neg_curv}
One might wonder in what directions the above theorem can be extended.  One can show that in a manifold of constant \textit{negative} curvature the inequality in Theorem \ref{theorem:comparison_sphere} holds in the reverse direction, that is, the secant in the manifold is shorter than the corresponding secant in $\mathbb{R}^{n}$. This by itself implies that, in a manifold of constant negative curvature, $\exp_{x}(\sum_{i=1}^{N}w_{i}\exp_{x}^{-1}x_{i})$  \textit{does not} belong to the convex hull of $\{x_{i}\}_{i=1}^{N}$, necessarily; and one needs to scale down the tangent vector $\sum_{i=1}^{N}w_{i}\exp_{x}^{-1}x_{i}$ to ensure it belong to the convex hull. This scaling somehow should be related to the size of the convex hull or the minimal ball of $\{x_{i}\}_{i=1}^{N}$ (see Conjecture \ref{conj:optimal} and Remark \ref{rem:Groisser}). Recall that the \textit{minimal ball} of $\{x_{i}\}_{i=1}^{N}\subset M$  is a closed ball of minimum radius containing $\{x_{i}\}_{i=1}^{N}$ (see \cite{BijanLp} for more on the minimal ball). Furthermore, even for nonnegative variable curvature, $\exp_{x}(\sum_{i=1}^{N}w_{i}\exp_{x}^{-1}x_{i})$ belonging to the convex hull of $\{x_{i}\}_{i=1}^{N}$ seems implausible. However, in this case, we conjecture that $\exp_{x}(\sum_{i=1}^{N}w_{i}\exp_{x}^{-1}x_{i})$ belongs to the minimal ball of $\{x_{i}\}_{i=1}^{N}$. Also another curious question whose answer is most likely ``yes'' is: ``In a manifold of constant nonnegative curvature, can one generate the entire convex hull of $\{x_{i}\}_{i=1}^{N}$ by varying the weights in $\exp_{x}(\sum_{i=1}^{N}w_{i}\exp_{x}^{-1}x_{i})$, where $x$ belongs to the convex hull?''
\end{remark}

\subsection{Convergence result}\label{sec:converge_optimal}
 We are now ready to state and prove the main theorem of the section.
\begin{theorem}\label{theorem:sphere_L2}
Assume that $M$ is a complete Riemannian manifold with constant nonnegative sectional curvature $\Delta=\delta\geq 0$. Let $p=2$ and $\{x_{i}\}_{i=1}^{N}\subset B(o,\rho)$, where $\rho\leq r_{\textup{\textrm{cx}}}$ (see (\ref{eq:rcx})). In Algorithm \ref{table:T1}, choose an initial point $x^{0}\in B(o,\rho)$ and a constant step-size $t_{k}=t$, where $t\in(0,1]$. Then we have: The algorithm is well-defined for every $k\geq 0$, each iterate continuously stays in $B(o,\rho)$, $f_{2}(x^{k+1})\leq f_{2}(x^{k})$ with equality only if $x_{k}=\bar{x}_{2}$, and $x^{k}\rightarrow \bar{x}_{2}$ as $k\rightarrow \infty$. Moreover, if for some $k'\geq 0$, $x^{k'}$ belongs to the convex hull of $\{x_{i}\}_{i=1}^{N}$, then $x^{k}$ also belongs to the convex for $k\geq k'$. More generally, for $2\leq p<\infty$ the same results hold if we take $t\in (0,t_{\rho,p}]$ with $t_{\rho,p}=\frac{1}{H_{B(o,\rho),p}}$ where $H_{B(o,\rho),p}=(p-1)\big(2\rho\big)^{p-2}$.
\end{theorem}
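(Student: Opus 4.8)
The plan is to derive the whole statement from the general convergence Theorem \ref{theorem:convergence_general}, applied with the neighborhood $S$ taken to be the ball $B(o,\rho)$ itself; once this is set up, the entire substance of the proof is to check that under the stated step-size restriction the iterates \emph{continuously stay} in $B(o,\rho)$. First I would verify the structural hypotheses of Theorem \ref{theorem:convergence_general} for $S=B(o,\rho)$. Since $\{x_i\}_{i=1}^N\subset B(o,\rho)$ is an \emph{open} ball with $\rho\le r_{\textrm{cx}}$, every $x\in\bar B(o,\rho)$ satisfies $d(x,x_i)<2\rho\le 2r_{\textrm{cx}}\le\textrm{inj}M\le\textrm{inj}x_i$, so $\bar B(o,\rho)$ avoids each cut locus $\mathcal C_{x_i}$; hence $f_p$ is $C^\infty$ off the data points and, by (\ref{eq:hessian_fp_bounds}), at least $C^2$ everywhere on $\bar B(o,\rho)$, which gives the required regularity. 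By Theorem \ref{theorem:exist_unique} the center $\bar x_p$ lies in $B(o,\rho)$ and is the unique zero of $\nabla f_p$ in $\bar B(o,\rho)$. Because the curvature is the constant $\delta=\Delta\ge 0$, we have $\textrm{c}_\delta\equiv 1$, so the right inequality in (\ref{eq:hessian_fp_bounds}) shows that at any $x\in B(o,\rho)$ the eigenvalues of the Hessian of $f_p$ are bounded above by $(p-1)\sum_i w_i d^{p-2}(x,x_i)<(p-1)(2\rho)^{p-2}=H_{B(o,\rho),p}$; for $p=2$ this bound is simply $1$. Finally, the prescribed step-sizes obey $t\le \tfrac{1}{H_{B(o,\rho),p}}<\tfrac{2}{H_{B(o,\rho),p}}$, so they lie in the admissible range of Theorem \ref{theorem:convergence_general}.

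The heart of the argument is an induction showing that each iterate continuously stays in $B(o,\rho)$. Assume $x^k\in B(o,\rho)$. From (\ref{eq:grad_Lp}), $-\nabla f_p(x^k)=\sum_i w_i d^{p-2}(x^k,x_i)\exp_{x^k}^{-1}x_i=W_k\,\hat v_k$, where $W_k=\sum_i w_i d^{p-2}(x^k,x_i)\le(2\rho)^{p-2}$ and $\hat v_k=\sum_i \hat w_i\exp_{x^k}^{-1}x_i$ with $\hat w_i=w_i d^{p-2}(x^k,x_i)/W_k\ge 0$, $\sum_i\hat w_i=1$ (the degenerate case $W_k=0$ forces $\nabla f_p(x^k)=0$ and $x^{k+1}=x^k$, which is trivial). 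Thus for $s\in[0,t]$ the curve traversed at step $k$ is $s\mapsto\exp_{x^k}(-s\nabla f_p(x^k))=\exp_{x^k}(sW_k\,\hat v_k)$, and since $t\le\tfrac{1}{(p-1)(2\rho)^{p-2}}$ we have $sW_k\le tW_k\le\tfrac{1}{p-1}\le 1$. Hence this curve is a sub-arc of $\sigma\mapsto\exp_{x^k}(\sigma\hat v_k)$, $\sigma\in[0,1]$, which is exactly the Riemannian convex combination curve of $\{x_i\}$ based at $x^k$ with weights $(\hat w_i)$. Because $M$ has constant nonnegative curvature and $B(o,\rho)$ is a strongly convex set (as $\rho\le r_{\textrm{cx}}$) containing the $x_i$ and contained in a ball of radius at most $r_{\textrm{cx}}$, Theorem \ref{theorem:convex_combination_const_pos} applies and this curve stays in $B(o,\rho)$; consequently so does its sub-arc, i.e., $x^{k+1}$ continuously stays in $B(o,\rho)$. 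This closes the induction (with base case $x^0\in B(o,\rho)$), and it also shows $x^{k+1}\notin\mathcal C_{x_i}$ for all $i$, so Algorithm \ref{table:T1} is well-defined at every step. Theorem \ref{theorem:convergence_general} then yields $f_p(x^{k+1})\le f_p(x^k)$ with equality only at $\bar x_p$, and $x^k\to\bar x_p$. For $p=2$ one has $W_k\equiv 1$ and $H_{B(o,\rho),2}=1$, so the condition reduces to $t\in(0,1]$, as stated.

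For the convex-hull claim I would run the identical induction with $B(o,\rho)$ replaced by $C$, the convex hull of $\{x_i\}$: $C$ is a (closed) strongly convex set containing the data and, being the smallest such set, is contained in $B(o,\rho)$, hence in a ball of radius at most $r_{\textrm{cx}}$; so Theorem \ref{theorem:convex_combination_const_pos} again guarantees $\sigma\mapsto\exp_{x^k}(\sigma\hat v_k)$ stays in $C$ whenever $x^k\in C$, and since $tW_k\le 1$ we get $x^{k+1}\in C$, hence inductively $x^k\in C$ for all $k\ge k'$ once $x^{k'}\in C$. I do not expect a serious obstacle in executing this plan: the only genuinely nontrivial ingredient is that the Riemannian convex combination remains inside $B(o,\rho)$, which is precisely Theorem \ref{theorem:convex_combination_const_pos} and ultimately rests on the secant comparison Theorem \ref{theorem:comparison_sphere}; everything else is the bookkeeping of recognizing the constant-step-size gradient step as a fraction (of size at most $\tfrac{1}{p-1}$) of that convex-combination curve, with the value $\tfrac{1}{H_{B(o,\rho),p}}$ being exactly the threshold keeping that fraction at most one. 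The one place where constant curvature is used essentially beyond the comparison theorem is in collapsing $\textrm{c}_\delta(\cdot)$ to the constant $1$, which is what makes $H_{B(o,\rho),p}$, and in particular the step-size $t=1$ for $p=2$, independent of the curvature.
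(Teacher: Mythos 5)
Your proposal is correct and follows essentially the same route as the paper: take $S=B(o,\rho)$ (or the convex hull) in Theorem \ref{theorem:convergence_general}, and obtain the ``continuously stays'' hypothesis from Theorem \ref{theorem:convex_combination_const_pos} by recognizing the gradient step as a Riemannian convex combination, with the general-$p$ case handled by the same rescaling of $-\nabla f_p$ (your normalization to weights summing to $1$ with scale $sW_k\le 1$ is just the paper's normalization by $1/H_{B(o,\rho),p}$ in slightly different bookkeeping). The added detail you supply on regularity, the Hessian bound via $\textrm{c}_\delta\equiv 1$, and well-definedness is consistent with the paper's proof.
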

\begin{proof}
The fact that each iterate continuously stays in $B(o,\rho)$ follows from Theorem \ref{theorem:convex_combination_const_pos}. The same argument shows that if $x^{k'}$ is in the convex hull of $\{x_{i}\}_{i=1}^{N}$, then $x^{k}$ also belongs to the hull for $k\geq k'$. By Proposition \ref{prop:hessian_stepsize}, step-size $t_{k}=t$ at each step results in strict reduction of $f_{2}$ unless at $\bar{x}_{2}$. Next, the iterates converging to $\bar{x}_{2}$ follows from Theorem \ref{theorem:convergence_general} by taking $S$ as $B(o,\rho)$ or the convex hull of $\{x_{i}\}_{i=1}^{N}$. For the general $p$, we notice that
$\frac{-1}{H_{B(o,\rho),p}}\nabla f_{p}(x)$ can be written as $\sum_{i=1}^{N}\tilde{w}_{i}\exp_{x}^{-1}x_{i}$, where
$\sum_{i=1}^{N}\tilde{w}_{i}\leq 1$ and $\tilde{w}_{i}\geq 0$. Therefore, again we can use Theorem \ref{theorem:convex_combination_const_pos}, and the rest of the claims follow similarly.
\end{proof}

\begin{remark}\label{remark:conjecture}
In a manifold of non-constant curvature, Theorem \ref{theorem:comparison_sphere} is not meaningful; therefore, a direct application of this theorem to prove Conjecture \ref{conj:optimal} for an arbitrary manifold of nonnegative curvature is not plausible. However, it is likely that comparison Theorem \ref{theorem:comparison_sphere} might be used to prove a useful comparison theorem for the case of manifold of variable nonnegative curvature, which in turn could allow us to apply Proposition \ref{prop:convex_combination}.
\end{remark}

\begin{remark}\label{rem:Groisser}
In \cite{Groisser}, Groisser introduced the notion of tethering: A map $\Psi: M\rightarrow M$ is called \emph{tethered} to $\{x_{i}\}_{i=1}^{N}$ if for every strongly convex regular geodesic ball $B$ containing $\{x_{i}\}_{i=1}^{N}$, $\Psi$ is defined on $B$ and $\Psi(B)\subset B$. To avoid technical difficulties which probably have little to do with the essence of the property of tethering, we replace ``every strongly convex regular geodesic ball'' with ``every ball of radius less than or equal to $r_{\textrm{cx}}$.'' Groisser's definition is more general than ours. Groisser conjectured that tethering ``might occur fairly generally.'' Several results in \cite{Groisser} can be strengthen if tethering assumption holds (even in this new sense). In the above theorem, we proved that for $t\in [0,1]$, the map $x\mapsto \exp_{x}(-t\nabla f_{2}(x))$ is tethered to $\{x_{i}\}_{i=1}^{N}$ in manifolds of constant nonnegative curvature. We conjecture that the same holds for manifolds of  nonnegative variable curvature. Based on the discussion in Remark \ref{rem:conv_neg_curv}, we conjecture that tethering in manifolds of negative curvature does not hold. As mentioned in Remark \ref{rem:conv_neg_curv} (and also expressed in Conjecture \ref{conj:optimal}), we conjecture that in order for $x\mapsto \exp_{x}(-t\nabla f_{2}(x))$ to map $B(o,\rho)\supset\{x_{i}\}_{i=1}^{N}$ to itself, $t$ should be smaller than $1$. More specifically, we conjecture that $t$ cannot be independent of $\rho$, and $t\in [0,\frac{1}{\textrm{c}_{\delta}(2\rho)}]$ suffices.
\end{remark}

\section{Convergence results for manifolds of arbitrary curvature}\label{sec:sub_optimal}
Here, we prove two classes of results which are sub-optimal compared to Conjecture \ref{conj:optimal}. In the first class the spread of data points is compromised to guarantee convergence. In the second class, the step-size is restricted more than what is needed to reduce the cost at each iteration to ensure that the iterates do not leave a neighborhood in which $\bar{x}_{2}$ the only zero of $\nabla f_{2}$.\footnote{The reader can convince herself or himself that Conjecture  \ref{conj:optimal} not only gives a smaller best a-priori $q$ than both Theorems \ref{theorem:compromised_L2} and \ref{theorem:compromise_step_size_L2} but also it gives a smaller asymptotic $q$ (see Remark \ref{rem:gradient_convexity}) when respective best a-priori step-sizes are employed. Notice that the asymptotic $q$ is data dependent.}
\subsection{Compromising the spread of data points}\label{sec:data_spread_compromize}
Here, by compromising the allowable spread (or support) of the data points we give a condition which guarantees convergence of Algorithm \ref{table:T1} to the global center of mass.

\begin{theorem}\label{theorem:compromised_L2}
Set $p=2$ and let $\bar{x}_{2}$ be the $L^{2}$ center of mass of $\{x_{i}\}_{i=1}^{N}\subset B(o,\rho)\subset M$ where $\rho\leq\frac{1}{3}r_{\textup{\textrm{cx}}}$. Define $t_{\delta,\rho}=\frac{1}{H_{B(o,3\rho)}}$, where $H_{B(o,3\rho)}=\textup{\textrm{c}}_{\delta}(4\rho)$ and $\textup{\textrm{c}}_{\kappa}$ is defined in (\ref{eq:c_kappa}).
In Algorithm \ref{table:T1} assume that $x^{0}\in B(o,\rho)$ and for every $k\geq 0$ choose $t_{k}=t$, where $t\in (0, 2t_{\delta,\rho})$. Then we have the following: The algorithm is well-defined for all $k\geq 0$ and each iterate of the algorithm continuously stays in $B(o,3\rho)$, $f_{2}(x^{k+1})\leq f_{2}(x^{k})$ for $k\geq 0$ (with equality only if $x^{k}$ is the Riemannian center of $\{x_{i}\}_{i=1}^N$), and $x^{k}\rightarrow \bar{x}_{2}$ as $k\rightarrow \infty$. Moreover, if $x^{0}$ coincides with $x^0$, then $\rho\leq \frac{1}{2}r_{\textup{\textrm{cx}}}$ is enough to guarantee the convergence, in which case each iterate of the algorithm continuously stays in $B(o,2\rho)$ and we can take $t_{\delta,\rho}=\frac{1}{H_{B(o,2\rho)}}$ where $H_{B(o,2\rho)}=\textup{\textrm{c}}_{\delta}(3\rho)$. More generally, for $2\leq p<\infty$ the same results hold if we replace $H_{B(o,3\rho)}$ and $H_{B(o,2\rho)}$, respectively, with $H_{B(o,3\rho),p}=(4\rho)^{p-2}\max\{p-1,\textup{\textrm{c}}_{\delta}(4\rho)\}$ and $H_{B(o,2\rho),p}=(3\rho)^{p-2}\max\{p-1,\textup{\textrm{c}}_{\delta}(3\rho)\}$.
\end{theorem}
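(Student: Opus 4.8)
The plan is to deduce Theorem~\ref{theorem:compromised_L2} from the general convergence Theorem~\ref{theorem:convergence_general} applied with $S=B(o,3\rho)$ (and, for the refinement $x^0=o$, with $S=B(o,2\rho)$), so the work splits into checking the static hypotheses of that theorem on $S$ and then the dynamic one that the iterates continuously stay in $S$. For the static part: since every $y\in\bar B(o,3\rho)$ satisfies $d(y,x_i)\le 4\rho\le\tfrac43 r_{\textrm{cx}}<\textrm{inj}M$, the cost $f_2$ is $C^\infty$ on $\bar B(o,3\rho)$ and, by~(\ref{eq:hessian_f2_bounds}) and the monotonicity of $\textrm{c}_\delta$, its Hessian is bounded above there by $H_{B(o,3\rho)}=\textrm{c}_\delta(4\rho)$; and since $\{x_i\}\subset B(o,\rho)\subset B(o,3\rho)$ with $3\rho\le r_{\textrm{cx}}$, Theorem~\ref{theorem:exist_unique} applied with the containing ball $B(o,3\rho)$ gives that $\bar x_2$ is the \emph{unique} zero of $\nabla f_2$ in $\bar B(o,3\rho)$. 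This also pins down the admissible step-size range as $t\in(0,2/H_{B(o,3\rho)})=(0,2t_{\delta,\rho})$.

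The heart of the proof, and where I expect the main difficulty, is showing that starting from $x^0\in B(o,\rho)$ every iterate continuously stays in $B(o,3\rho)$. The naive triangle-inequality bound on a step, $d(x^{k+1},x^k)=t\|\nabla f_2(x^k)\|\le t\sum_i w_i d(x^k,x_i)$, is useless here: because $\textrm{c}_\delta(4\rho)\ge 1$ the step-size may be close to $2$, so a single step could a priori move $x^k$ by nearly $4\rho$, far outside $B(o,3\rho)$; worse, $f_2$ need not be convex on $B(o,3\rho)$ (cf.\ Remark~\ref{rem:convergence_behavior}), so the restricted-contraction arguments of Le and Groisser cannot be run on this ball. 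The way around it is to track the cost rather than the displacement. I would induct on $k$ with inductive hypothesis $x^k\in B(o,3\rho)$ and $f_2(x^k)<2\rho^2$ (immediate for $k=0$ because $d(x^0,x_i)<2\rho$), and rely on the elementary confinement estimate
\[
f_2(y)=\tfrac12\textstyle\sum_i w_i d^2(y,x_i)\ \ge\ \tfrac12\bigl(\textstyle\sum_i w_i d(y,x_i)\bigr)^2\ >\ \tfrac12\bigl(d(y,o)-\rho\bigr)^2\qquad\bigl(d(y,o)>\rho\bigr),
\]
which follows from Jensen's inequality and $d(y,x_i)\ge d(y,o)-d(o,x_i)>d(y,o)-\rho$. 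Granting the hypothesis, set $c(s)=\exp_{x^k}(-s\nabla f_2(x^k))$ for $s\in[0,t]$ and $s^\star=\inf\{s\in[0,t]:c(s)\notin B(o,3\rho)\}$. On $[0,s^\star)$ the geodesic $c$ stays in $B(o,3\rho)$, so the second-order Taylor estimate~(\ref{eq:hessian_reduction}) with constant $H_{B(o,3\rho)}$ applies (it only requires the traversed geodesic segment to lie in $B(o,3\rho)$) and, since $t<2/H_{B(o,3\rho)}$, yields $f_2(c(s))\le f_2(x^k)<2\rho^2$; by continuity $f_2(c(s^\star))\le 2\rho^2$. But if $s^\star$ existed then $d(c(s^\star),o)=3\rho$, and the confinement estimate would force $f_2(c(s^\star))>\tfrac12(2\rho)^2=2\rho^2$ --- a contradiction. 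Hence $c([0,t])\subset B(o,3\rho)$, i.e.\ $x^{k+1}$ continuously stays in $B(o,3\rho)$; applying~(\ref{eq:hessian_reduction}) at $s=t$ then gives $f_2(x^{k+1})\le f_2(x^k)<2\rho^2$, strictly unless $\nabla f_2(x^k)=0$ (i.e.\ $x^k=\bar x_2$), which closes the induction.

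With this in hand, Theorem~\ref{theorem:convergence_general} applies with $S=B(o,3\rho)$ and yields the monotone descent of $f_2$ and $x^k\to\bar x_2$; well-definedness of Algorithm~\ref{table:T1} throughout follows from $d(x^k,x_i)<4\rho<\textrm{inj}M$. For the refinement with $x^0=o$ the identical argument runs with $S=B(o,2\rho)$ and $H_{B(o,2\rho)}=\textrm{c}_\delta(3\rho)$: now $f_2(x^0)=f_2(o)<\tfrac12\rho^2$, so the confinement estimate prevents $c$ from ever reaching $d(\cdot,o)=2\rho$, and the weaker requirement $\rho\le\tfrac12 r_{\textrm{cx}}$ (which guarantees $2\rho\le r_{\textrm{cx}}$) suffices. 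Finally, for $2\le p<\infty$ all three stages go through verbatim after replacing $\tfrac12 d^2$ by $\tfrac1p d^p$, using~(\ref{eq:hessian_fp_bounds}) for the Hessian bound $H_{B(o,3\rho),p}=(4\rho)^{p-2}\max\{p-1,\textrm{c}_\delta(4\rho)\}$ (resp.\ $H_{B(o,2\rho),p}=(3\rho)^{p-2}\max\{p-1,\textrm{c}_\delta(3\rho)\}$), and replacing the confinement estimate by $f_p(y)>\tfrac1p(d(y,o)-\rho)^p$ (Jensen for $t\mapsto t^p$) with $f_p(x^0)<\tfrac1p(2\rho)^p$ (resp.\ $<\tfrac1p\rho^p$); in every case the ``trap radius'' coming out of the comparison of $f_p(x^0)$ with $\tfrac1p(R-\rho)^p$ is exactly $R=3\rho$ (resp.\ $R=2\rho$). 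The conceptual point I would stress is that no convexity of $f_p$ on the large ball is used: monotone descent of $f_p$ together with its $p$-th power growth away from the data cluster already confines the entire trajectory, including the interior points of each geodesic step.
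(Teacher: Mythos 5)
Your proposal is correct and follows essentially the same route as the paper: the Hessian bound $\textrm{c}_\delta(4\rho)$ on $B(o,3\rho)$, the growth estimate $f_2(y)>\tfrac12(d(y,o)-\rho)^2$ showing $f_2$ exceeds $f_2(x^0)<2\rho^2$ outside $B(o,3\rho)$, the continuity/first-exit contradiction confining each geodesic step, and then Theorem \ref{theorem:convergence_general} with $S=B(o,3\rho)$ (resp.\ $B(o,2\rho)$ when $x^0=o$), with (\ref{eq:hessian_fp_bounds}) handling general $p$. Your explicit induction with the exit time $s^\star$ is just a more detailed writeup of the paper's own argument.
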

\begin{proof}
For any $x\in M\setminus B(o,3\rho)$ we have $f_{2}(x)>2\rho^{2}>f_{2}(x^{0})$ (see (\ref{eq:fp})). From (\ref{eq:hessian_f2_bounds}) and (\ref{eq:c_kappa}) and that $\{x_{i}\}_{i=1}^{N}\subset B(o,\rho)$, one sees that $H_{B(o,3\rho)}=\textrm{c}_{\delta}(4\rho)$ is an upper bound on the eigenvalues of the Hessian of $f_{2}$ in $B(o,3\rho)$. Moreover, by Proposition \ref{prop:hessian_stepsize}, for small enough $t\in (0,2t_{\delta,\rho})$, $s\mapsto \exp_{x^{0}}(-s\nabla f_{2}(x^0))$ does not leave $B(o,3\rho)$ for $s\in [0,t]$, and we have $f(\exp_{x^{0}}(-t\nabla f_{2}(x^{0}))\leq f(x^{0})$, with equality only if $x^{0}$ is the unique zero of $\nabla f_{2}$ in $B(o,3\rho)$. However, $s\mapsto \exp_{x^{0}}(-s\nabla f_{2}(x^{0}))$ must lie in $B(o,\rho)$ for all $s$ in $(0,2t_{\delta,\rho})$, since on the boundary of $B(o,3\rho)$, $f$ is larger than $f(x^{0})$ and by continuity $s\mapsto \exp_{x^{0}}(-s\nabla f_{2}(x^{0}))$ cannot leave $B(o,3\rho)$ without making $f_{2}\big(\exp_{x^{0}}(-s\nabla f_{2}(x^{0}))\big)$ larger than $f_{2}(x^{0})$ inside $B(o,3\rho)$, which is a contradiction. Therefore, for any $t\in (0,2t_{\delta,\rho})$, the iterate $x^{1}=\exp_{x^{0}}(-t\nabla f_{2}(x^{0}))$ continuously stays in $B(o,3\rho)$ and $f_{2}(x^{1})\leq f_{2}(x^{0})$, with equality only if $x^{0}=\bar{x}_{2}$. A similar argument shows that for any $y\in B(o,3\rho)$ such that $f_{2}(y)\leq f_{2}(x^{0})$, $f(\exp_{y}(-s\nabla f_{2}(y)))$ for $s\in[0,t]$ belongs to $B(o,3\rho)$ and $f(\exp_{y}(-t\nabla f_{2}(y)))\leq f(y)$ with equality only if $y=\bar{x}_{2}$. In particular, assuming $x^{k}\in B(o,3\rho)$ and $f_{2}(x^{k})\leq f_{2}(x^{0})$, by setting $y=x^{k}\in B(o,3\rho)$, we conclude that $x^{k+1}$ continuously stays in $B(o,3\rho)$ and $f(x^{k+1})\leq f(x^{k})$ with equality only if $\bar{x}^{k}=\bar{x}_{2}$. Note that for any point $y$ in $B(o,3\rho)\setminus B(o,\rho)$ we have $d(y,x_i)<4\rho<\frac{2}{3}\textrm{inj}M$ for $1\leq i\leq n$; therefore, $\nabla f_{2}(y)$ in (\ref{eq:grad_Lp}) and hence (\ref{eq:gradient_descent}) in Algorithm \ref{table:T1} are well-defined. Next, by taking $B(o,3\rho)$ as $S$ in Theorem \ref{theorem:convergence_general}, we conclude that $x^{k}\rightarrow \bar{x}_{2}$ as $k\rightarrow \infty$. To see the claim about $B(o,2\rho)$, note that if $x^{0}$ coincides with $o$, then we have $f_{2}(x)>\frac{1}{2}\rho^{2}>f_{2}(x^{0})$ for any $x$ out of $B(o,2\rho)$ and the derived conclusions hold with $B(o,2\rho)$. The claims about $2\leq p<\infty$ follow similarly by further using (\ref{eq:hessian_fp_bounds}).
\end{proof}
The radius $3\rho$ above was found based on the simple observation that $f_{p}$ takes larger values outside of $B(o,3\rho)$ than inside of $B(o,\rho)$. Finer results should not be difficult to prove.

\subsection{Compromising the step-size}\label{sec:step_size_compromise}
Here, we briefly describe another approach in which the step-size is further restricted to ensure that the iterates do not leave a ball larger than $B(o,\rho)$- and not $B(o,\rho)$ itself. A rather similar idea has been used in \cite{Arnaudon1} and \cite{Yang1}, and here we partially follow the methodology in \cite{Yang1}. Specifically, given $\rho$ and $\rho'$ where $\rho<\rho'\leq r_{\textrm{cx}}$ and assuming the data points lie in $B(o,\rho)$, by restricting the step-size we want to make sure that, starting from $B(o,\rho')$, the iterates do not leave the larger ball $B(o,\rho')$.

For $x$ inside $B(o,\rho)$, let $t_{x}>0$ denote the first time $t\mapsto \gamma_{x}(t)=\exp_{x}(-t\nabla f_{2}(x))$ hits the boundary of $B(o,\rho')$. Note that $\sup_{B(o,\rho)}||\nabla f_{2}(x)||<2\rho$, therefore we must have $t_{x}>t_{x}^{\textrm{in}}$ where
\begin{equation}\label{eq:t_in}
t^{\textrm{in}}_{x}=\frac{\inf_{x\in B(o,\rho),y\in M\setminus B(o,\rho')} d(x,y)}{2\rho}=\frac{\rho'-\rho}{2\rho}.
\end{equation}
Similarly, for $y$ in the annular region between $B(o,\rho')$ and $B(o,\rho)$, let $t_{y}>0$ denote the first time $t\mapsto \gamma_{y}(t)=\exp_{y}(-t\nabla f_{2}(y))$ hits the boundary of $B(o,\rho')$. For $t\mapsto \frac{1}{2}d^{2}(o,\gamma_{y}(t))$ one writes the second order Taylor's series expansion in the interval $[0,t_{y}]$ as:
\begin{equation}
\frac{1}{2}d^{2}(o,\gamma_{y}(t_{y}))=\frac{1}{2}\rho'^{2}=\frac{1}{2}d^{2}(o,y)+\langle -\nabla f_{2}(y),-\exp_{y}^{-1}o\rangle t_{y}+\frac{1}{2}
\frac{\mathrm{d}^{2}f_{2,o}(t)}{\mathrm{d}t^{2}}\big|_{t=s}t_{y}^{2},
\end{equation}
where $s$ is in the interval $(0,t_{y})$. Next, using (\ref{eq:hessian_f2_bounds}) and noting that $\rho^2-d^{2}(o,y)>0$ we verify that
\begin{equation}
t_{y}> \frac{2\langle -\nabla f_{2}(y),\exp_{y}^{-1}o\rangle}{\textrm{c}_{\delta}(\rho')},
\end{equation}
where $\textrm{c}_{\delta}$ is defined in (\ref{eq:c_kappa}). Denote by $\angle x_{i}yo$ the angle, at $y$, between the minimal geodesics from $y$ to $x_{i}$ and from $y$ to $o$. It is shown in Lemma 10 in \cite{Yang1} that
\begin{equation}\label{eq:yang}
\cos \angle x_{i}yo \geq \frac{\textrm{sn}_{\Delta}(d(y,o)-\rho)}{\textrm{sn}_{\Delta}(d(y,o)+\rho)},
\end{equation}
where $\textrm{sn}_{\Delta}$ is defined in (\ref{eq:cot_hessian}). Using this and observing that $\|\nabla f_{2}(y)\|\geq d(y,o)-\rho$ we have $t_{y}> t_{y}^{\textrm{out},1}$, where
\begin{equation}\label{eq:tout_1}
t_{y}^{\textrm{out},1}= \frac{2}{\textrm{c}_{\delta}(\rho')}\times d(y,o)\times \big(d(y,o)-\rho\big)\times\frac{\textrm{sn}_{\Delta}(d(y,o)-\rho)}{\textrm{sn}_{\Delta}(d(y,o)+\rho)}.
\end{equation}
Also observe that (trivially) we must have $t_{y}>t_{y}^{\textrm{out},2}$, where
\begin{equation}\label{eq:tout_2}
t_{y}^{\textrm{out},2}=\frac{\rho'-d(y,o)}{\rho+d(y,o)}.
\end{equation}
Obviously, $t_{y}$ must satisfy $t_{y}> \max\{t_{y}^{\textrm{out},1},t_{y}^{\textrm{out},2}\}$. Define
\begin{equation}\label{eq:compromosied_step}
t_{\textrm{exit}}=\min\{t^{\textrm{in}},\inf_{y:\rho\leq d(y,o)<\rho'}\max\{t_{y}^{\textrm{out},1},t_{y}^{\textrm{out},2}\}\}
\end{equation}
where $t^{\textrm{in}}$, $t_{y}^{\textrm{out},1}$, and $t_{y}^{\textrm{out},2}$ are defined in (\ref{eq:t_in}), (\ref{eq:tout_1}), and (\ref{eq:tout_2}), respectively, with the assumption $\rho< \rho'\leq r_{\textrm{cx}}$.  We see that for any $z\in B(o,\rho')$ and any $t\in [0,t_{\textrm{exit}}]$, $\exp_{z}(-t\nabla f_{2}(z))$ belongs to $B(o,\rho')$. Notice that $t=t_{\textrm{exit}}$ is indeed acceptable. Also observe that $t_{\textrm{exit}}$ is larger than zero; since otherwise it can be zero only if for $z$ in the region $B(o,\rho')\setminus B(o,\rho)$ and very close to the boundaries of the region $t_{z}^{\textrm{out},1}$ and $t_{z}^{\textrm{out},2}$ both become arbitrary close to zero, which obviously cannot happen. Based on this analysis we have the following theorem.

\begin{theorem}\label{theorem:compromise_step_size_L2}
Let $p=2$, $\{x_{i}\}_{i=1}^{N}\subset B(o,\rho)$ and assume $\rho<\rho'\leq r_{\textup{\textrm{cx}}}$. Define
$H_{B(o,\rho')}=\textup{\textrm{c}}_{\delta}(\rho'+\rho)$ and set
\begin{equation}\label{eq:compromised_step_size_with_hessian}
t_{\delta,\Delta,\rho,\rho'}^{*}=\min\{t_\textup{\textrm{exit}},\frac{1}{H_{B(o,\rho')}}\},
\end{equation}
where $t_\textup{\textrm{exit}}$ is defined in (\ref{eq:compromosied_step}). In Algorithm \ref{table:T1}, choose an initial condition $x^{0}\in B(o,\rho)$ \footnote{In fact, according to the derivations, one could choose $x^{0}\in B(o,\rho')$.} and step-size $t_{k}=t$, where $t\in (0,2t_{\delta,\Delta,\rho,\rho'}^{*})\cap [0,t_\textup{\textrm{exit}}]$. Then we have the following: The algorithm is well-defined for every $k\geq 0$, each iterate continuously stays in  $B(o,\rho')$, $f_{2}(x^{k+1})\leq f_{2}(x^{k})$ with equality only if $x^{k}=\bar{x}_{2}$, and $x^{k}\rightarrow \bar{x}_{2}$ as $k\rightarrow \infty$.
\end{theorem}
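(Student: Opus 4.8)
The plan is to verify the hypotheses of the general convergence Theorem \ref{theorem:convergence_general} with the \emph{enlarged} ball $B(o,\rho')$ playing the role of the neighborhood $S$, feeding in the exit-time bookkeeping carried out in the paragraphs just before the statement in order to control the iterates.

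First I would collect the geometric facts that make $S=B(o,\rho')$ admissible. Since $\rho'\leq r_{\textrm{cx}}$ and $\{x_i\}_{i=1}^{N}\subset B(o,\rho)\subset B(o,\rho')$, Theorem \ref{theorem:exist_unique} applied to the ball $B(o,\rho')$ shows that $\bar x_2$ exists, is unique, lies in $B(o,\rho)$, and is the \emph{only} zero of $\nabla f_2$ in $\bar B(o,\rho')$. For every $x\in\bar B(o,\rho')$ and every $i$ one has $d(x,x_i)<\rho'+\rho<2r_{\textrm{cx}}=\min\{\textrm{inj}M,\pi/\sqrt{\Delta}\}\leq\min\{\textrm{inj}(x_i),\pi/\sqrt{\Delta}\}$, so $\bar B(o,\rho')$ contains no cut point of any data point. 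Hence $\nabla f_2$ as in (\ref{eq:grad_Lp}) is classically well-defined on $\bar B(o,\rho')$ (so the algorithm is well-defined at each step as long as the iterates stay there), $f_2$ is $C^\infty$ on the compact set $\bar B(o,\rho')$, and by (\ref{eq:hessian_f2_bounds}) together with monotonicity of $\textrm{c}_\delta$ the number $H_{B(o,\rho')}=\textrm{c}_\delta(\rho'+\rho)$ is an upper bound on the eigenvalues of the Hessian of $f_2$ in $B(o,\rho')$.

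Next I would use the estimate established just before the theorem: for every $z\in B(o,\rho')$ and every $s\in[0,t_{\textrm{exit}}]$ one has $\exp_z(-s\nabla f_2(z))\in B(o,\rho')$. The chosen constant step-size satisfies $t\leq t_{\textrm{exit}}$ (it lies in $(0,2t^{*}_{\delta,\Delta,\rho,\rho'})\cap[0,t_{\textrm{exit}}]$) and $t<2t^{*}_{\delta,\Delta,\rho,\rho'}\leq 2/H_{B(o,\rho')}$. Starting from $x^0\in B(o,\rho)\subset B(o,\rho')$, an induction on $k$ then shows each iterate lies in $B(o,\rho')$ and, in fact, that each iterate \emph{continuously} stays in $B(o,\rho')$. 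Applying Proposition \ref{prop:hessian_stepsize} at each $x^k$ with $S=B(o,\rho')$ and $H_S=H_{B(o,\rho')}$ — legitimate precisely because the whole geodesic segment $s\mapsto\exp_{x^k}(-s\nabla f_2(x^k))$, $s\in[0,t]$, stays in $S$ — yields $f_2(x^{k+1})\leq f_2(x^k)$, with equality only if $x^k$ is a critical point of $f_2$, i.e.\ only if $x^k=\bar x_2$.

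Finally I would close with Theorem \ref{theorem:convergence_general} applied to $S=B(o,\rho')$: $f_2$ is $C^2$ in $S$ and $C^1$ on $\bar S$; $\bar x_2$ is the unique zero of $\nabla f_2$ in $\bar S$; $H_{B(o,\rho')}$ bounds the Hessian in $S$; $t\in(0,2/H_{B(o,\rho')})$; $x^0\in S$; and each iterate continuously stays in $S$ — so $x^k\to\bar x_2$, which together with the monotone decrease already obtained is exactly the assertion of the theorem. I do not expect a genuine obstacle inside the proof of the theorem itself: the one substantive ingredient sits \emph{upstream} of the statement, namely the lower bounds $t^{\textrm{in}}$, $t_{y}^{\textrm{out},1}$, $t_{y}^{\textrm{out},2}$ on the first time $\exp_z(-s\nabla f_2(z))$ can leave $B(o,\rho')$ — which rely on the inward component of $-\nabla f_2(y)$ being quantitatively controlled via Yang's angle estimate (\ref{eq:yang}) and on the Hessian bound for $\frac{1}{2}d^2(o,\cdot)$ — and these combine to give a \emph{positive, explicit} $t_{\textrm{exit}}$. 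The proof of the theorem proper is then the assembly of that material with Proposition \ref{prop:hessian_stepsize} and Theorems \ref{theorem:exist_unique} and \ref{theorem:convergence_general}; the only point needing a little care is recognizing that it is the \emph{larger} ball $B(o,\rho')$, not $B(o,\rho)$, in which $\bar x_2$ must be shown to be the unique critical point — available from Theorem \ref{theorem:exist_unique} only because $B(o,\rho')$ still has radius at most $r_{\textrm{cx}}$.
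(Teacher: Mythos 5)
Your proposal is correct and follows essentially the same route as the paper's own (much terser) proof: the exit-time estimates preceding the statement give that every iterate continuously stays in $B(o,\rho')$, the absence of cut points there (since $\rho'+\rho<2r_{\textrm{cx}}\leq \textrm{inj}M$) makes the algorithm well-defined and makes $\textrm{c}_{\delta}(\rho'+\rho)$ a valid Hessian bound, and Proposition \ref{prop:hessian_stepsize} together with Theorem \ref{theorem:convergence_general} applied with $S=B(o,\rho')$ (where Theorem \ref{theorem:exist_unique} gives $\bar{x}_{2}$ as the unique zero of $\nabla f_{2}$, since $\rho'\leq r_{\textrm{cx}}$) yields the monotone decrease and convergence. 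Your write-up simply makes explicit the hypothesis-checking that the paper leaves implicit.
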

\begin{proof}
The fact that each iterate continuously stays in $B(o,\rho')$ follows from preceding arguments. From this it we see that $d(x^{k},x_{i})<\textrm{inj}M$ for every $k\geq 0$ and $1\leq i\leq N$, and hence the algorithm is well-defined for $k\geq 0$. The rest of the claims follow from Theorem \ref{theorem:convergence_general}.
\end{proof}
Next, we give some numerical examples about the interplay between $\rho$, $\rho'$, and the step-sizes according to Theorem \ref{theorem:compromise_step_size_L2} and compare that with step-size and allowable spread from Conjecture \ref{conj:optimal} and Theorem \ref{theorem:compromised_L2}. First, let $\delta =0$ and $\Delta>0$ and let $\rho'=r_\textrm{cx}$. To have $t_{k}=1$ we need to have $\rho\leq r_{1}\approx  0.0303 r_{\textrm{cx}}$, while Theorem \ref{theorem:compromised_L2} gives much larger $\rho$, i.e., $\rho\leq \frac{1}{3}r_{\textrm{cx}}$. We can increase $\rho$ and further restrict the step-size: If we set $\rho=\frac{1}{3}r_{\textrm{cx}}$, then we get $t_{\delta,\Delta,\rho,\rho'}^{*}\approx 0.3965$, if $\rho=\frac{9}{10}r_{\textrm{cx}}$ we get $t_{\delta,\Delta,\rho,\rho'}^{*}=0.0353$, and finally when $\rho=0.99 r_{\textrm{cx}}$ we get $t_{\delta,\Delta,\rho,\rho'}^{*}=0.0033$, all of which are considerably smaller than the optimal step-size of $1$ in Conjecture \ref{conj:optimal}. Yet, the added value is that we have convergence for more spread-out data points (i.e., going from $\rho\leq \frac{1}{3}r_{\textrm{cx}}$ to \emph{almost} $\rho\leq r_{\textrm{cx}}$).  Next, let $\delta<0$, $\Delta=0$, and $\rho'=\frac{\pi}{2}\sqrt{-\delta}$ (this is just an arbitrary number). To get the optimal step-size in Theorem \ref{theorem:rate_of_convergence} which is $\frac{1}{H_{B(o,\rho')}}$ and is equal to $\frac{1}{\textrm{c}_{\delta}(\rho+\rho')}$, we need $\rho\leq r_{2}\approx 0.1950\rho'$. Therefore, the $t_{\delta,\Delta,\rho,\rho'}^{*}$ from Theorem \ref{theorem:compromise_step_size_L2} cannot be larger than the $t_{\delta,\rho}$ from Theorem \ref{theorem:compromised_L2}. In fact, if set $\rho=\frac{1}{3}\rho'$, then we need $t_{\delta,\Delta,\rho,\rho'}^{*}=0.3022$ according to Theorem \ref{theorem:compromise_step_size_L2}, while we have $t_{\delta,\rho}=0.4632$ from Theorem \ref{theorem:compromised_L2}.

Finer analysis could yield a larger estimate for the exit time than (\ref{eq:compromosied_step}). However, since $\textrm{c}_{\delta}(2\rho)$ is a upper bound on the eigenvalues of the Hessian of $f_{2}$ in $B(o,\rho)$, such an improvement will not result in an optimal step-size better than $t_{k}=\textrm{ct}_{\delta}(2\rho)^{-1}$ (cf. (\ref{eq:compromised_step_size_with_hessian}) and Conjecture \ref{conj:optimal}).

\section{On the configuration of data points and the local rate of convergence}\label{sec:RateOfConvergence}
In this section, we give a qualitative answer to the following question:``For which configurations of data points Algorithm \ref{table:T1} locates the center of mass very fast? very slowly?'' We use the facts mentioned in Subsection \ref{sec:speed_of_convergence} to answer this question.

We assume $p=2$. From Theorem \ref{theorem:rate_of_convergence} and the definition of $q$ in (\ref{eq:c_q}) it is clear that in addition to $\alpha$ the ratio $\frac{h_{S}}{H_{S}}$ is also important in determining the speed of convergence, and the asymptotic speed of convergence depends on the ratio $\frac{h_{S}}{H_{S}}$ in a very small neighborhood $S$ around $\bar{x}$.  Obviously, the smaller the ratio is, the slower the convergence will be, and vice versa. In the Euclidean case the ratio is $1$ and with $\alpha=1$ we have $q=0$; therefore, Algorithm \ref{table:T1} finds the center of mass in one step (see (\ref{eq:rete_of_conv}) and (\ref{eq:c_q})). However, in a curved manifold the ratio $\frac{h_{S}}{H_{S}}$ can be very small, due to drastic difference in the behavior of the Hessian of the distance function along different directions. Next we give simple examples that demonstrate this fact.

We consider the case of constant curvature since in this case the eigenvalues of the Hessian of the distance function are the same along all directions but the radial direction. Furthermore, let us assume $M$ is a $2$-dimensional simply connected manifolds with constant curvature, that is $M=\mathbb{S}^{2}_{\Delta}$ where $\Delta=1$ or $\Delta=-1$ (with the convention $\mathbb{S}^{2}_{1}\equiv \mathbb{S}^{2}$). We construct two simple configurations for which Algorithm \ref{table:T1} converges very fast and very slowly, respectively. Consider four data points $\{x_{i}\}_{i=1}^{4}$ and the closed ball $\bar{B}(o,\rho)\subset M$, where $\rho<r_{\textrm{cx}}$. Assume $x_{1}$ and $x_{2}$ are on the boundary of the ball in antipodal positions and that $x_{3}$ and $x_{4}$ are also in antipodal positions such that the geodesic $\gamma_{ox_{1}}$ from $o$ to $x_{1}$ and the geodesic $\gamma_{o,x_{3}}$ from $o$ to $x_{3}$ are perpendicular at $o$. We denote this configuration by $\bullet\displaystyle_{\bullet}^{\bullet}\bullet$. Obviously, $\bar{x}=o$ is the center of mass of $\{x_{i}\}_{i=1}^{4}$ with equal weights. It is easy to verify that for the $\bullet\displaystyle_{\bullet}^{\bullet}\bullet$ configuration the Hessian of $f_{2}$ at $x=o$ both along $\gamma_{ox_{1}}$ and along $\gamma_{ox_{3}}$ has eigenvalue $\frac{1}{2}(\rho\textrm{ct}_{\Delta}(\rho)+1)$. Consequently, at $o$ the ratio of the smallest and largest eigenvalue is $1$, hence $\frac{h_{S}}{H_{S}}\approx1$ around $\bar{x}=o$; and therefore, one expects that the local rate of convergence will be very fast. The opposite configuration is $\displaystyle_{\bullet}^{\bullet}$, that is, when $x_{3}$ and $x_{4}$ coincide with $x_{1}$ and $x_{2}$, respectively. In this case, at $x=o$ along $\gamma_{ox_{1}}$ the Hessian of $f_{2}$ has eigenvalue of $1$ and in the perpendicular direction it has eigenvalue $\rho\textrm{ct}_{\Delta}(\rho)$. Therefore, if $\Delta=1$, we have $\frac{h_{S}}{H_{S}}\approx \rho\cot\rho$ around $o$ which, in particular, can be very small if $\rho$ is close to $\frac{\pi}{2}$. If $\Delta=-1$, we have $\frac{h_{S}}{H_{S}}\approx(\rho\coth\rho)^{-1}$, which again can be small if $\rho$ is large. It is well known that the shape of the level sets of a function in a neighborhood of a minimizer is related to the ratio $\frac{h_{S}}{H_{S}}$. 
If the level sets are very elongated or thin, this means that the Hessian has very small eigenvalues along longitudinal directions and very large eigenvalues along the lateral directions and hence $\frac{h_{S}}{H_{S}}$ can be very small. For our two configurations, we encourage the reader to compare the shapes of the level sets of $f_{2}$ in $B(o,\rho)\subset \mathbb{S}^{2}$ for levels close to $f_{2}(o)$ (especially when $\rho$ is close to $\frac{\pi}{2}$) with the level sets of $f_{2}$ in $B(o,\rho)\subset \mathbb{S}_{-1}^{2}$ for levels close to $f_{2}(o)$ when $\rho$ is very large).

As a tangible example, on the standard unit sphere $\mathbb{S}^{2}$ we run Algorithm \ref{table:T1} for both the configurations with two different values of $\rho\approx 0.35\pi$ and $\rho\approx 0.47\pi$. The initial condition is chosen \emph{randomly}. The step-size is chosen as $t_{k}=1$. Figure \ref{fig:Example1} shows the distance $d(x^{k},\bar{x})$ in terms of the iteration index $k$. It is clear that for the $\displaystyle_{\bullet}^{\bullet}$ configuration the convergence is slower than the convergence for the $\bullet\displaystyle_{\bullet}^{\bullet}\bullet$ configuration, and as $\rho$ increases, convergence for both configurations becomes slower. However, for the $\displaystyle_{\bullet}^{\bullet}$ configuration as $\rho$ approaches $\frac{\pi}{2}$, the convergence becomes extremely slow and the $\bullet\displaystyle_{\bullet}^{\bullet}\bullet$ configuration is much more robust in that sense. Note that when $\rho\approx \frac{\pi}{2}$ even the center of mass of the $\displaystyle_{\bullet}^{\bullet}$ configuration is on the verge of non-uniqueness and this causes further (error) sensitivity and hence poor convergence (see \cite{BijanPhd} on the issue of high noise-sensitivity of the Riemannian mean in positively curved manifolds).

Although our example is rare in statistical applications, in a more general setting also one expects that if the configuration of data points is such that the convex hull of the data points has an elongated shape (especially if the length of the convex hull is large), then locating the Riemannian center of mass becomes a difficult problem (with the exception of the Euclidean case). Our analysis  does not tell the whole story in the case of non-constant curvature and we need more detailed analysis that takes into account the variability of eigenvalues of the Hessian of the distance function along non-radial directions, as well.

\begin{figure}[hpt]
  \begin{center}
  \scalebox{.65}{\includegraphics{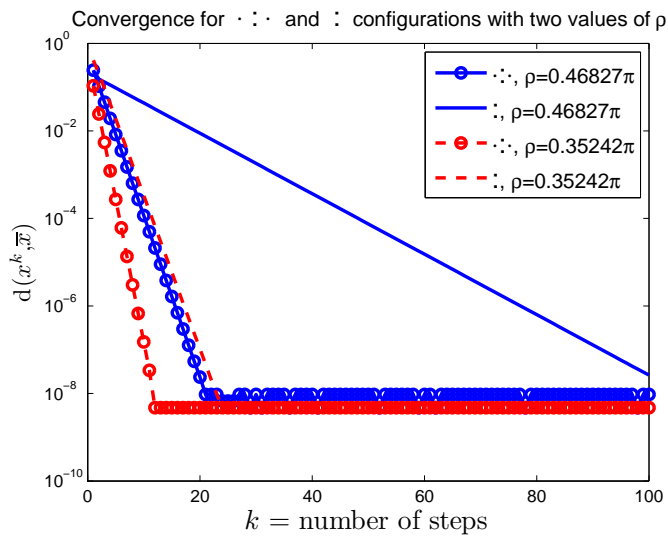}}
  \caption[]{Convergence behavior of Algorithm \ref{table:T1} with step-size $t_{k}=1$ for locating the center of mass two data point configurations denoted by $\bullet\displaystyle_{\bullet}^{\bullet}\bullet$ and $\displaystyle_{\bullet}^{\bullet}$ on the unit sphere $\mathbb{S}^{2}$. 
  }\label{fig:Example1}
\end{center}
\end{figure}

\section{Concluding Remarks}\label{sec:conc}
Our goal has been to give and prove the the best possible conditions for convergence of the popular constant step-size gradient descent for finding the Riemannian center of mass. We argued that Conjecture \ref{conj:optimal} gives such best conditions (in some specific yet general sense). The proof of the conjecture seems to be difficult, because, in particular, it appears that such a proof requires some very deep understanding about the behavior of the exponential map of a Riemannian manifold. We proved the conjecture for manifolds of constant nonnegative curvature and our proof was based on comparison Theorem \ref{theorem:comparison_sphere} which seems to give a better estimate of the behavior of the exponential map than what (possibly) could be gained by standard comparison theorems. Therefore, extending this comparison theorem (in appropriate sense) to manifolds of variable curvature not only could help prove Conjecture \ref{conj:optimal}, but also could provide deeper understanding of the behavior of the exponential map of a general manifold. Our Theorems \ref{theorem:compromised_L2} and \ref{theorem:compromise_step_size_L2} (whose proofs are based on simple observations) give weaker convergence conditions, but still these results are considerably better than the available ones. Finer analysis could improve the results of these theorems, as well.
\bibliography{OnTheConvergenceOfGradientDescentForFindingTheRiemannianCenterOfMass}
\bibliographystyle{plain}

\end{document}